\documentclass[reqno,11pt]{amsart}
\newif\ifdraft
\draftfalse 
%
%
%

%
\usepackage{amscd,amssymb,amsmath,amsthm}
\usepackage{graphicx}
\usepackage{color}
\usepackage[all,cmtip]{xy}
\usepackage{amsfonts}
\usepackage{verbatim}
\usepackage{enumerate}

\usepackage{graphicx}
\numberwithin{equation}{section}

\makeatletter
\newcommand\footnoteref[1]{\protected@xdef\@thefnmark{\ref{#1}}\@footnotemark}
\makeatother

\newcounter{mycounter}

%

%
\setlength{\textwidth}{5.6in}
\setlength{\textheight}{8.2in}
\setlength{\oddsidemargin}{.2in}
\setlength{\evensidemargin}{.2in}
\setlength{\topmargin}{.1in}
\setlength{\headsep}{.3in}

\theoremstyle{plain}
\newtheorem{proposition}{Proposition}
\newtheorem{theorem}[proposition]{Theorem}

\newtheorem*{conjecture*}{Conjecture}
\newtheorem{definition}[proposition]{Definition}
\newtheorem{corollary}[proposition]{Corollary}
\newtheorem{lemma}[proposition]{Lemma}
\newtheorem{remark}[proposition]{Remark}

\newtheorem{example}[proposition]{Example}

\newtheorem{proposition-definition}[proposition]{Proposition/Definition}
\newtheorem*{proposition*}{Proposition}
\newtheorem*{theorem*}{Theorem}
\newtheorem*{maintheorem*}{Main Theorem}
\newtheorem*{maincorollary*}{Main Corollary}
\newtheorem*{corollary*}{Corollary}
\newtheorem*{lemma*}{Lemma}
\newtheorem*{remark*}{Remark}
\newtheorem*{remarks*}{Remarks}
\newtheorem*{definition*}{Definition}
\newtheorem*{example*}{Example}
\newtheorem*{examples*}{Examples}
\newtheorem*{criterion*}{Generation Criterion}
\newtheorem*{explanation*}{Explanation}

\numberwithin{proposition}{section}
\numberwithin{equation}{section}

\def\co{\colon\thinspace}

\newcommand{\N}{\mathbb{N}}
\newcommand{\Z}{\mathbb{Z}}
\newcommand{\Q}{\mathbb{Q}}
\newcommand{\R}{\mathbb{R}}
\newcommand{\C}{\mathbb{C}}
\newcommand{\K}{\mathbb{K}}

\newcommand{\coker}{\mathrm{coker}\,}

\renewcommand{\P}{\mathbb{P}}

\newcommand{\inn}{\mathrm{in}}
\newcommand{\out}{\mathrm{out}}

\newcommand{\OO}{\mathcal O}

\usepackage{ulem}
\def\red#1{{\textcolor{red}{#1}}}
\newcommand{\T}{\mathbb T}

\usepackage{longtable}
\usepackage[all,cmtip]{xy}
\usepackage{upref}
\usepackage{hyperref}

\renewcommand{\red}{}

\begin{document}

\title[Invariance of symplectic cohomology]
{Invariance of symplectic cohomology and\\ twisted cotangent bundles over surfaces}

\author{Gabriele Benedetti and Alexander F. Ritter}
\address{Gabriele Benedetti, Mathematisches Institut, Universit\"at Heidelberg, Germany.\newline \indent Alexander F. Ritter, Mathematical Institute, University of Oxford, England.}
\email{gbenedetti@mathi.uni-heidelberg.de \qquad and \qquad ritter@maths.ox.ac.uk}
\date{version: \today}

\begin{abstract}
We prove that symplectic cohomology for open convex symplectic manifolds is invariant when the symplectic form undergoes deformations which may be non-exact and non-compactly supported, provided one uses the correct local system of coefficients in Floer theory. As a sample application beyond the Liouville setup, we describe in detail the symplectic cohomology for disc bundles in the twisted cotangent bundle of surfaces, and we deduce existence results for periodic magnetic geodesics on surfaces. In particular, we show the existence of geometrically distinct orbits by exploiting properties of the BV-operator on symplectic cohomology.
\end{abstract}
\maketitle
%
%
%
%
%
%
%
%
\section{Introduction}
\label{Section Introduction}
%
%
Symplectic cohomology is an invariant of non-compact symplectic manifolds, whose importance both in dynamical applications and in homological mirror symmetry has become increasingly clear in recent literature. This invariant is constructed using Hamiltonian Floer cohomology, which is surveyed in Salamon's lecture notes \cite{Salamon} for closed symplectic manifolds: in that case, the invariant recovers the quantum cohomology, whilst in the non-compact setup the invariant is much richer due to the Hamiltonian dynamics at infinity. The surveys by Oancea \cite{OanceaEnsaios}, Seidel \cite{SeidelBiased} and Abouzaid \cite{Abouzaid-survey} review many of the developments relating to symplectic cohomology.

Its origins in the work of Cieliebak-Floer-Hofer-Wysocki \cite{CFHW} and Viterbo \cite{Viterbo} were motivated especially by dynamical applications, specifically existence theorems for closed Hamiltonian orbits. Later on, the relationship between this invariant and the study of fillings of contact manifolds was explored, starting from the groundbreaking work of Bourgeois-Oancea \cite{Bourgeois-Oancea-Inventiones,Bourgeois-Oancea-S1,Kwon-Koert,Gutt}.

We will postpone to Section \ref{ss:applications} the discussion of cotangent bundles, in which case there is an abundance of literature on how symplectic cohomology has been used to prove the existence of closed geodesics and magnetic geodesics.

Symplectic cohomology has also been used effectively to obtain obstructions on the existence of exact Lagrangian submanifolds, a very difficult problem in symplectic topology, via Viterbo's functoriality theorem \cite{Viterbo}. In homological mirror symmetry, the crucial role played by symplectic cohomology goes back to Seidel's ICM talk \cite{SeidelICM}, in particular the open-closed string map which relates the Hochschild homology of the wrapped Fukaya category to the symplectic cohomology has become a crucial tool to prove theorems about generators for Fukaya categories due to the work of Abouzaid \cite{Abouzaid}, which was extended also to the non-exact setup by Ritter-Smith \cite{RitterSmith}.

Much of the symplectic literature on non-compact symplectic manifolds is focused on the case where the symplectic form is globally exact. This is because it simplifies the Floer theory considerably, and cotangent bundles $(T^*N,d\theta)$ were a driving motivating example. Interest in the non-exact setting \red{stems} not only from twisted cotangent bundles \red{$(T^*N,d\theta+\pi^*\sigma)$} and magnetic geodesics \red{(see Section \ref{ss:applications})}, but also from the fact that non-compact K\"{a}hler manifolds arising in algebraic geometry are very rarely exact as this would force closed holomorphic curves to be constant. We also wish to avoid the weaker assumption, often encountered in Floer theory, that $\omega$ is \textit{aspherical}, meaning $\omega$ vanishes on $\pi_2(M)$, as this would rule out any K\"{a}hler manifold that contains a non-trivial holomorphic sphere. Non-exactness allows Gromov-Witten theory to play an interesting role in Floer theory \cite{Ritter2,Ritter4,RitterSmith} \red{and} one interpretation of symplectic cohomology is as a generalisation of the quantum cohomology $QH^*(M,\omega)$ to non-compact settings \cite{Ritter6}.

Even in situations where the non-compact symplectic manifold $(M,d\theta)$ is exact, one can obtain substantial applications in symplectic topology by considering how the invariants change upon deforming the symplectic form $d\theta$. For instance, in the work of the second author \cite{Ritter1,Ritter2} such a deformation gave rise to new obstructions to the existence of exact Lagrangian submanifolds in cotangent bundles and in ALE spaces. In situations where $(M,\omega)$ is non-exact, it can also be beneficial to deform $\omega$. For instance, for non-compact Fano varieties in \cite{Ritter6} a deformation of the monotone toric K\"{a}hler form forced symplectic cohomology to become a semi-simple algebra, and this combined with the use of the open-closed string map gave rise to generation theorems for the wrapped Fukaya category.

To avoid making the introduction too technical, Section \ref{Section Summary of results} will be a summary of the precise definitions and deformation theorems which we now summarise in looser terms.
Our paper is concerned with the setup of (typically non-exact) symplectic manifolds which are exact at infinity.
We will say $(M,\omega,\theta)$ is a \textit{convex manifold} (Definition \ref{d:convex}) to mean that $(M,\omega)$ is an open symplectic manifold admitting an exhausting \red{(namely proper and bounded below)} function $h:M\to \R$, where $\theta$ is a $1$-form defined on $M^{\out}:=\{h\geq 0\}$ with
\begin{equation}\label{Equation Intro convex condition}
\omega=d\theta\quad\text{and} \quad \theta(X_h)>0 \quad \text{holds on } M^{\out}.
\end{equation}
Here $X_h$ is the Hamiltonian vector field, $\omega(\cdot,X_h)=dh$. An \textit{isomorphism of convex manifolds} is a symplectomorphism which preserves the $1$-form at infinity. On $M^{\out}$ there is a Liouville vector field $Z$ via $\theta=\iota_Z \omega$, and positivity in \eqref{Equation Intro convex condition} is equivalent to \red{$Z$ pointing out of the domains $\{h\leq a\}$ for all $a\geq0$. Below we use the notation
\[
\Sigma:=\{h=0\},\qquad M^{\inn}:=\{h\leq 0\}.	
\]}
We do not impose that $Z$ is positively integrable \red{since} one can always embed \red{$(M,\omega,\theta)$ via the $Z$-flow into the \textit{completion} $(\hat M,\hat\omega,\hat\theta)$, which at infinity is identifiable with $(\Sigma \times [0,\infty),d(e^r\alpha),e^r\alpha)$ for some contact form $\alpha$ on $\Sigma$ and $r$-coordinate on $[0,\infty)$. }

The symplectic cohomology of a convex manifold is the direct limit 
\begin{equation}\label{Equation Intro SH is direct lim}
SH^*(M,\omega,\theta)=\varinjlim HF^*(H)
\end{equation}
via Floer continuation maps of the Floer cohomologies computed for Hamiltonians $H:M\to \R$ which at infinity are ``linear'' of larger and larger slopes. Linearity refers to a radial coordinate $R=e^r$ determined by the choice of $h$ ($r$ is the time flown in direction $Z$ starting from $\Sigma$). \red{The coefficients of the Floer chain complexes are taken in the Novikov field $\Lambda$ involving series in a formal variable $t$, see \eqref{Equation Novikov ring}. For any cohomology class $\zeta\in H^1(\mathcal LM)$ on the free loop space $\mathcal LM$ of $M$, one can use $\zeta$ to twist the differential in the Floer chain complexes to define the so-called twisted symplectic cohomology $SH^*(M,\omega,\theta)_\zeta$. However the twisted differential, and thus $SH^*(M,\omega,\theta)_\zeta$, may not always be well-defined due to a lack of convergence in the Novikov field $\Lambda$ (Remark \ref{Remark Twisting variable s}).}

\red{It is crucial to understand invariance of symplectic cohomology under certain natural operations.} For example, as shown by the second author  
(following the proof in the Liouville case \cite{SeidelBiased}) symplectic cohomology is invariant under isomorphism.
\begin{theorem}[\cite{Ritter2}, Theorem 8\footnote{
		There is an erratum in the proof \cite[Lemma 7]{Ritter2} (analogously to
		\cite[(3c)(3.21)]{SeidelBiased}). The correct inequality is $|d(\partial_s f_s)\cdot \partial_s u| \leq \rho^{-1/2} C |\partial_s u|$.
		By Cauchy-Schwarz, $\rho^{-1/2}C|\partial_s u| \leq C(\varepsilon^{-1} \rho + \varepsilon|\partial_s u|^2)$ (for any $\varepsilon>0$).
		The next line is: $\Delta \rho + $(first order terms) 
		%
		$
		%
		\geq (1-C\varepsilon)|\partial_s u|^2
		- \rho (\partial_s h_s' + h_s'C+C+C\varepsilon^{-1})
		$. To make the first term non-negative, we pick $\varepsilon=1/C$. The rest of the proof holds as written.
	}]\label{Theorem invariance under iso of SH}
	Any isomorphism $\varphi:(M_0,\omega_0,\theta_0)\to(M_1,\omega_1,\theta_1)$ naturally induces an isomorphism $\varphi_*\co SH^*(M_0,\omega_0,\theta_0) \to SH^*(M_1,\omega_1,\theta_1)$. The same holds for twisted symplectic cohomology if it is well-defined.
\end{theorem}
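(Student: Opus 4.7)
The plan is to build $\varphi_*$ by pulling back Floer data along $\varphi$ on the completions. Since $\varphi^*\theta_1=\theta_0$ on $M_0^{\out}$, $\varphi$ intertwines the Liouville vector fields at infinity and hence extends along Liouville trajectories to a symplectomorphism $\hat\varphi\colon \hat M_0\to \hat M_1$ of completions. Writing the ends in the standard form $\Sigma_i\times[0,\infty)$ with $\theta_i=e^{r_i}\alpha_i$, Liouville equivariance forces $\hat\varphi$ to take the shape $(x,r_0)\mapsto(\psi(x),r_0+g(x))$ for some diffeomorphism $\psi\colon\Sigma_0\to\Sigma_1$ and a smooth bounded function $g\colon\Sigma_0\to\R$ (with $\psi^*\alpha_1=e^{-g}\alpha_0$).

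For any admissible pair $(H,J)$ on $\hat M_1$, set $H':=\hat\varphi^*H$ and $J':=\hat\varphi^*J$. On the end of $\hat M_0$, $H'(x,r_0)=h(e^{g(x)}e^{r_0})$ is asymptotically linear in $e^{r_0}$ with bounded, point-dependent slope $s\cdot e^{g(x)}$, so the usual maximum principle applies and $H'$ is admissible (if one prefers strictly linear Hamiltonians, a monotone continuation argument absorbs the factor $e^g$ into a slightly larger linear slope, a standard quasi-isomorphism). The diffeomorphism $\hat\varphi$ then yields bijections between $1$-periodic orbits of $X_H$ and $X_{H'}$, and between Floer cylinders for $(H,J)$ and $(H',J')$, giving a $\Lambda$-linear chain isomorphism
\[
\hat\varphi^*\colon CF^*(H,J;\Lambda)\xrightarrow{\cong} CF^*(H',J';\Lambda),
\]
where $\Lambda$-linearity uses $\hat\varphi^*\omega_1=\omega_0$, so that symplectic areas of cylinders are preserved. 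A monotone homotopy $(H_s,J_s)$ on $\hat M_1$ pulls back to a monotone homotopy on $\hat M_0$, and the same bijection carries continuation cylinders to continuation cylinders, so $\hat\varphi^*$ intertwines continuation maps. Passing to direct limits over cofinal slopes produces the required $\varphi_*\colon SH^*(M_0,\omega_0,\theta_0)\to SH^*(M_1,\omega_1,\theta_1)$, defined via $\hat\varphi^{-1}$. In the twisted setting, any $\zeta\in H^1(\mathcal LM_1)$ pulls back along the induced loop-space map to $\hat\varphi^*\zeta\in H^1(\mathcal LM_0)$, and the chain bijection intertwines the two twisted differentials, so whenever one side converges in $\Lambda$ the other does too and the same isomorphism goes through.

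The main technical obstacle is establishing the maximum principle for continuation cylinders carrying the pulled-back Floer data, which pins trajectories inside a compact region of $\hat M_0$ and ensures all of the above bijections are literal bijections. This is exactly the ingredient repaired in the footnote: with $\rho$ a squared distance function, the sharp bound $|d(\partial_s f_s)\cdot\partial_s u|\leq \rho^{-1/2}C|\partial_s u|$ combined with Cauchy-Schwarz and the choice $\varepsilon=1/C$ yields
\[
\Delta\rho+(\text{first order terms})\geq (1-C\varepsilon)|\partial_s u|^2-\rho(\partial_sh_s'+h_s'C+C+C\varepsilon^{-1}),
\]
with a non-negative coefficient on $|\partial_s u|^2$, which suffices to rule out interior maxima of $\rho$. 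Once this a priori estimate is in place the argument reduces to functoriality bookkeeping, and applying the same construction to $\hat\varphi^{-1}$ confirms that $\varphi_*$ is a two-sided inverse and hence an isomorphism.
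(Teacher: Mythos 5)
Your argument reproduces the proof in \cite[Theorem 8]{Ritter2} that is cited here (with the footnote's erratum to Lemma 7): extend $\varphi$ along the Liouville flows to a completion symplectomorphism in the normal form $(x,r)\mapsto(\psi(x),r+g(x))$, pull back Floer data, then compare the resulting class of Hamiltonians (radial in $e^{g(x)}R_0$) with the genuinely $R_0$-radial class via an $s$-dependent continuation whose a priori $C^0$-bound is exactly the corrected differential inequality in the footnote. One small precision: the maximum principle is \emph{not} what makes the pullback bijections literal --- compactness of $\mathcal M(x,y;\hat\varphi^*H,\hat\varphi^*J)$ on $\hat M_0$ is inherited automatically by conjugating with $\hat\varphi$ --- it is needed only for the $s$-dependent interpolation between $\hat\varphi^*H$ and an $R_0$-radial Hamiltonian, so your sentence attributing ``the above bijections'' to the maximum principle slightly misplaces where the estimate actually does its work.
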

\begin{remark}
We always tacitly assume that 
$(M,\omega)$ is \textit{weakly monotone} (see Section \ref{Section Summary of results}), which ensures $HF^*(H)$ is well-defined by the methods of Hofer-Salamon \cite{Hofer-Salamon} rather than having to appeal to more advanced machinery, such as Kuranishi structures or Polyfolds.
\end{remark}

We emphasize that the function $h$ is not fixed in the definition of convex manifold, but it enters crucially in the construction of symplectic cohomology, since it determines the class of Hamiltonians. An implicit consequence of Theorem \ref{Theorem invariance under iso of SH} is that the symplectic cohomology is independent of $h$. Such statements will be familiar to experts from the exact setup, but some care is required in the non-exact setup as the surprisingly strong invariance result of the exact case \cite{Abouzaid-Seidel-Altering} was only possible due to the fact that $Z$ was globally defined and that the Floer action functional was single-valued, both of which fail in the non-exact setting.

By ``exact setting'', in which case $M$ is called a \textit{Liouville manifold}, we mean that $\theta$ additionally extends to a global primitive of $\omega$ on the whole $M$. This is stronger than asking that $\omega$ is globally exact, as $\theta$ may fail to extend. When this fails, $M$ is called \textit{Quasi-Liouville}; such examples arise from $T^*\T^2$ endowed with certain twisted symplectic form (see Theorem \ref{t:cmp}). We show in Lemma \ref{Lemma Liouville condition} that the obstruction for a convex manifold to be Liouville is the \textit{relative class}
\begin{equation}\label{Equation intro relative class}
[\omega,\theta]\in H^2(M,M^{\out})\cong H^2_c(M).
\end{equation}
\red{That isomorphism is canonical by Equation \eqref{e:isocoh} which implies that the class is in fact independent of the choice of exhausting function $h$ (and thus the choice of $M^{\out}$).}

\red{As we will see next, this class plays a special role, when we consider a \textit{deformation} of a convex manifold, which we now define. By deformation we mean a familiy $s\mapsto (M,\omega_s,\theta_s)$, $s\in[0,1]$ of convex manifolds such that the corresponding family of exhausting functions $h_s$ is given by $h_s:=H(s,\cdot)$ for some exhausting function $H:[0,1]\times M\to\R$. We will denote by $M^\out_s$ and $M^\inn_s$ the outer and inner part with respect to $h_s$ and by $(\hat M_s,\hat\omega_s,\hat\theta_s)$ the completion of $(M,\omega_s,\theta_s)$.}
\red{By \eqref{Equation intro relative class}, we can identify the cohomology classes $[\omega_s,\theta_s]$ as elements of the same vector space $H^2_c(M)$. The next result shows that if, under this identification, the relative class is constant along the deformation, all convex manifolds in the deformation (and hence their symplectic cohomologies) are isomorphic. This fact is used, for example, in the applications of Theorem \ref{t:nonexact}.}
\begin{theorem}\label{t:invariance1}
Let $(M,\omega_s,\theta_s)$ be a deformation \red{such that $[\omega_s,\theta_s]\in H^2_c(M)$ is independent of $s$. Then there is an isomorphism
\[
\varphi:(\hat M_0,\hat \omega_0,\hat\theta_0) \to (\hat M_1, \hat\omega_1,\hat\theta_1).
\]}
In particular, $SH^*(M,\omega_0,\theta_0) \cong SH^*(M,\omega_1,\theta_1)$\red{, and }the same holds for twisted symplectic cohomology if it is well-defined.
\end{theorem}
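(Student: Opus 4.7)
The approach is a Moser-type isotopy argument tailored to the convex-at-infinity setting: once an isomorphism of convex manifolds $\varphi:(\hat M_0,\hat\omega_0,\hat\theta_0)\to(\hat M_1,\hat\omega_1,\hat\theta_1)$ is produced, the statement on $SH^*$ (and on its twisted version, when well-defined) follows immediately from Theorem \ref{Theorem invariance under iso of SH}.

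First I would pass to the completions $(\hat M_s,\hat\omega_s,\hat\theta_s)$, where $\hat\theta_s$ is globally defined on the cylindrical end $\Sigma_s\times[0,\infty)$ and $\hat\omega_s=d\hat\theta_s$ there. Using the exhausting function $H$ (and, if needed, an auxiliary ambient isotopy built from its gradient), I would identify the underlying smooth manifolds $\hat M_s$ with a fixed model in such a way that a common cylindrical end is shared by all $s\in[0,1]$. The hypothesis that $[\omega_s,\theta_s]\in H^2_c(M)$ is constant translates, after differentiating in $s$, into the exactness of $\bigl(\tfrac{d}{ds}\omega_s,\tfrac{d}{ds}\theta_s\bigr)$ in the relative de Rham complex computing $H^2(M,M^{\out}_s)\cong H^2_c(M)$. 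Extracting cochain-level primitives yields smooth families $\mu_s\in\Omega^1_c(M)$ and $f_s\in C^\infty(M^{\out}_s)$ satisfying
\[
d\mu_s=\tfrac{d}{ds}\omega_s \text{ on } M, \qquad \tfrac{d}{ds}\theta_s=\mu_s+df_s \text{ on } M^{\out}_s.
\]

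Next I would define the Moser vector field $X_s$ on $\hat M_s$ by $\iota_{X_s}\hat\omega_s=-\mu_s$. Compactness of $\operatorname{supp}\mu_s$ yields compactness of $\operatorname{supp}X_s$, so its time-$1$ flow $\varphi$ is a well-defined diffeomorphism of the completion. The classical Moser identity
\[
\tfrac{d}{ds}(\varphi_s^*\hat\omega_s)=\varphi_s^*\bigl(d\iota_{X_s}\hat\omega_s+\tfrac{d}{ds}\hat\omega_s\bigr)=\varphi_s^*(-d\mu_s+d\mu_s)=0
\]
gives $\varphi^*\hat\omega_1=\hat\omega_0$, while $X_s\equiv 0$ near infinity forces $\varphi=\mathrm{id}$ there and hence $\varphi^*\hat\theta_1=\hat\theta_0$ on the cylindrical end. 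Thus $\varphi$ is an isomorphism of convex manifolds, and an appeal to Theorem \ref{Theorem invariance under iso of SH} completes the argument.

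The main obstacle is producing the pair $(\mu_s,f_s)$ smoothly in $s$ with $\mu_s$ of compact support, given that $M^{\out}_s$ itself deforms. This requires a smooth-in-$s$ incarnation of the isomorphism $H^2(M,M^{\out}_s)\cong H^2_c(M)$, which is cleanest to implement by first straightening the contact hypersurfaces $\Sigma_s$ to a common $\Sigma$ via a preliminary ambient isotopy (so that the outer region is independent of $s$) and then applying the usual relative Poincar\'e lemma; a supplementary cutoff against an extension of $f_s$ then pushes the primitive into the inner region using the exactness $\omega_s=d\theta_s$ near infinity, ensuring that the Moser flow is genuinely trivial at infinity and hence preserves the convex structure pointwise there.
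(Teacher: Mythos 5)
Your overall two-step plan --- first normalize so the deformation is compactly supported, then run Moser --- is the paper's strategy (Sections 3.3--3.5), but the normalization step as you describe it would fail, and this is a genuine gap. You want a relative cochain primitive $(\mu_s,f_s)$ of $(\tfrac{d}{ds}\omega_s,\tfrac{d}{ds}\theta_s)$ with $\mu_s\in\Omega^1_c(M)$. However, a compactly supported $\mu_s$ with $d\mu_s=\tfrac{d}{ds}\omega_s$ forces $\tfrac{d}{ds}\omega_s=0$ pointwise at infinity, where $\omega_s=d(e^r\alpha_s)$; and the form $e^r\tfrac{d}{ds}\alpha_s$ is closed on $\Sigma\times[0,\infty)$ only when $\tfrac{d}{ds}\alpha_s\equiv0$ (look at the $dr$-component of its differential). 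So unless the contact forms at infinity are $s$-independent, no such $\mu_s$ exists, and the ``supplementary cutoff'' you invoke does not fix this: replacing $\mu_s$ by $\mu_s-d(\rho\tilde f_s)$ leaves $\tfrac{d}{ds}\theta_s$ at infinity, which is still not zero. Your Moser vector field $X_s$ would therefore grow like $e^r$ at infinity, its flow need not be complete, and even if it could be integrated one would only obtain $\varphi^*\hat\theta_1=\hat\theta_0+dg$ (a ``Liouville isomorphism''), not the exact equality $\varphi^*\hat\theta_1=\hat\theta_0$ demanded by the definition of isomorphism of convex manifolds. Your ``preliminary ambient isotopy'', as you describe it, only straightens the domains $M^{\out}_s$, not the $1$-forms $\theta_s|_{M^{\out}_s}$, so it does not resolve this.

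The paper supplies the missing step in Proposition \ref{Lemma varphis isotopy on M}: via Gray stability applied to the family of contact forms $\alpha_s$ on $\Sigma$, combined with a radial shift (Lemma \ref{Lemma family of alpha gives parametrization}), one constructs diffeomorphisms $\varphi_s$ that are the identity on $M^{\inn}_0$ and satisfy $\varphi_s^*\hat\theta_s=\hat\theta_0$ \emph{identically} at infinity, so that $\beta_s:=\varphi_s^*\hat\omega_s-\hat\omega_0$ really is compactly supported (this requires breaking $[0,1]$ into ``adiabatic steps'', since transversality of $Z_{s''}$ to $\Sigma_{s'}$ is only controlled for $s''$ near $s'$). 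Only after this normalization does the constant-relative-class hypothesis produce compactly supported primitives $\lambda_s$ with $\beta_s=d\lambda_s$ (Lemma \ref{Lemma from iso get betas are exact}), at which point your Moser computation goes through verbatim. In short, the preliminary isotopy must normalize $\theta_s$ at infinity, not merely the outer regions, and that is precisely the Gray-stability input that your proposal omits.
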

\red{The proof is in two steps. In the first step, we assume that the deformation is $C^0$-small and we construct the map in the statement as a composition $\varphi=\varphi_2\circ\varphi_1$. To define $\varphi_2:\hat M_0\to \hat M_1$, we use a trick from Seidel-Smith \cite{Seidel-Smith}: on $\hat M^{\mathrm{out}}_0$, $\varphi_2$ is obtained by applying Gray stability to the deformation of contact manifolds $(\Sigma,\theta_s|_{\Sigma})$ and then extending it in a canonical way to $M^{\mathrm{in}}$. As a result, $\varphi^*_2\hat\theta_1=\hat\theta_0$ and $\varphi^*_2\hat\omega_1=\hat\omega_0+\beta$, where $\beta$ is a closed 2-form with compact support. The class $[\beta]\in H^2_c(\hat M_0)$ vanishes if and only if the relative class $[\omega_s,\theta_s]$ is constant. In this case, we build an isomorphism $\varphi_1:(\hat M_0,\hat \omega_0,\hat\theta_0) \to (\hat M_0, \hat\omega_0+\beta,\hat\theta_0)$ as the time-one map of a flow obtained by a Moser argument.}
	
\red{In the second step, we break the interval $[0,1]$ into subintervals on which the deformation is $C^0$-small, and we obtain the map $\varphi$ as a composition of the maps obtained from the first step. The subdivision of the deformation into small ones (so-called ``adiabatic steps") is an idea which is frequently used in establishing isomorphisms of symplectic cohomology along deformations and we learned it from \cite[Section 2.4]{Bae-Frauenfelder}}.

\red{We now want to go beyond Theorem \ref{t:invariance1} by considering deformations in which the relative class varies. In this case, the existence of an isomorphism $\varphi_1$ isotopic to the identity is obstructed (see Lemma \ref{Lemma varphit preserves relative class}). If the transgression $\tau(\omega_s)\in H^1(\mathcal LM)$ also varies (where $\mathcal{L}M$ is the free loop space of $M$), one cannot even expect $SH^*(M,\omega_0,\theta_0)$ to be isomorphic to $SH^*(M,\omega_1,\theta_1)$ as they involve different Novikov fields. To off-set that, one must use twisted coefficients induced by $\tau(\omega_1-\omega_0)\in H^1(\mathcal{L}M)$. 
However, as observed before, the corresponding twisted symplectic cohomology may not be well-defined. The crucial point that we use to address this problem is to show that when $\beta$ is a closed 2-form on $M$ with sufficiently small norm on $M^{\inn}$, such that $\beta=d\lambda$ is exact on $M^{\out}$, then twisted symplectic cohomology $SH^*(M,\omega,\theta)_{\tau(\beta)}$ is well-defined and satisfies}
\begin{equation}\label{e:SHiso}
SH^*(M,\omega,\theta)_{\tau(\beta)}\cong SH^*(M,\omega+\beta,\theta+\lambda).
\end{equation}
Moreover, the twisted symplectic cohomology is a unital $\Lambda$-algebra admitting a canonical unital $\Lambda$-algebra homomorphism
\begin{equation}\label{Equation c* map twisted}
c^*: QH^*(M,\omega)_{\beta} \to SH^*(M,\omega,\theta)_{\tau(\beta)}.
\end{equation}
Here, $QH^*(M,\omega)_{\beta}$ is the twisted quantum cohomology of $(M,\omega)$, so holomorphic spheres $u:\C P^1 \to M$ are counted with Novikov weight $t^k$ where $k=\int u^*\omega + \int u^*\beta$, and as a $\Lambda$-vector space $QH^*(M,\omega)_{\beta}=H^*(M;\Lambda)$.

The precise quantitative statement of the above claim is Theorem \ref{t:invariance2}, which is a mouthful, but it implies the following memorable result, which is our main theorem and which proves invariance under ``short deformations''.
\begin{theorem}\label{c:maint}
Let $(M,\omega_s,\theta_s)$ be a deformation of convex manifolds. Then for all sufficiently small $s\geq 0$, there is a unital $\Lambda$-algebra isomorphism
\begin{equation}\label{Eqn nonexact is twisted}
SH^*(M,\omega_s,\theta_s) \cong SH^*(M,\omega_0,\theta_0)_{\tau(\omega_s-\omega_0)},
\end{equation}
which commutes via the $c^*$-maps from \eqref{Equation c* map twisted} with the unital $\Lambda$-algebra isomorphism 
\begin{equation}\label{Eqn QH is twisted}
QH^*(M,\omega_s) \cong QH^*(M,\omega_0)_{\red{\omega_s-\omega_0}}.
\end{equation}
\end{theorem}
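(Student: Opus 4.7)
The strategy is to reduce Theorem \ref{c:maint} directly to the quantitative statement \eqref{e:SHiso} (proved as Theorem \ref{t:invariance2}) by choosing the tautological primitive. Specifically, I would set
\[
\beta := \omega_s - \omega_0, \qquad \lambda := \theta_s - \theta_0,
\]
so that $d\lambda = \beta$ on every subset of $M$ where both $\theta_0$ and $\theta_s$ are defined. Plugging this into \eqref{e:SHiso} applied to $(M,\omega_0,\theta_0)$ yields
\[
SH^*(M,\omega_0,\theta_0)_{\tau(\omega_s-\omega_0)} \;\cong\; SH^*(M, \omega_0+\beta,\, \theta_0+\lambda) \;=\; SH^*(M, \omega_s, \theta_s),
\]
which is precisely the isomorphism \eqref{Eqn nonexact is twisted}.

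The one subtlety in this reduction is that $\theta_s$ is a priori only defined on $M^{\out}_s$ and hence the primitive $\lambda$ may fail to exist on the outer region $M^{\out}_0$ used to compute the left-hand side. To fix this, I would first pick a single exhausting function $\tilde h: M \to \R$ whose outer region $\tilde M^{\out}$ lies in $M^{\out}_0 \cap M^{\out}_s$ for all $s$ in a small neighbourhood of $0$ and for which both $\theta_0$ and $\theta_s$ still satisfy \eqref{Equation Intro convex condition}; this is possible by continuity since $\theta_s(X_{h_0}) > 0$ on the compact slice $\{h_0 = 0\}$ persists for small $s$. By Theorem \ref{Theorem invariance under iso of SH}, replacing $h_0$ and $h_s$ by $\tilde h$ does not affect either side of \eqref{Eqn nonexact is twisted}. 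On $\tilde M^{\out}$ the form $\lambda = \theta_s-\theta_0$ is a genuine primitive of $\beta$, and smoothness of the deformation gives $\|\beta\|_{C^0}\to 0$ on the compact set $\tilde M^{\inn}$ as $s\to 0$, so the smallness hypothesis of \eqref{e:SHiso} is met for $s$ sufficiently small.

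For the algebraic structure, the isomorphism produced by \eqref{e:SHiso} is asserted by Theorem \ref{t:invariance2} to be a unital $\Lambda$-algebra map commuting with the $c^*$-maps \eqref{Equation c* map twisted}. On the quantum side, the isomorphism \eqref{Eqn QH is twisted} is essentially tautological: a holomorphic sphere $u$ enters the twisted complex $QH^*(M,\omega_0)_{\omega_s-\omega_0}$ with weight $t^{\int u^*\omega_0 + \int u^*(\omega_s-\omega_0)} = t^{\int u^*\omega_s}$, which is its weight in $QH^*(M,\omega_s)$. Under this identification the untwisted $c^*$-map for $(M,\omega_s,\theta_s)$ becomes the twisted $c^*$-map for $(M,\omega_0,\theta_0)_{\tau(\omega_s-\omega_0)}$, so the square in Theorem \ref{c:maint} automatically commutes.

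I expect the real work to sit entirely in Theorem \ref{t:invariance2} rather than in the present argument. The hardest parts of that quantitative result should be the convergence of the twisted Floer differential in the Novikov field $\Lambda$ (which is exactly what forces $s$ to be small) and the propagation of the multiplicative structure and of the $c^*$-maps through the chain-level comparison between $SH^*(M,\omega,\theta)_{\tau(\beta)}$ and $SH^*(M,\omega+\beta,\theta+\lambda)$. Once those technical issues are handled in the proof of Theorem \ref{t:invariance2}, Theorem \ref{c:maint} will follow as a short formal consequence.
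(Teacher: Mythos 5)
Your proposal is correct and matches the paper's intent: Theorem \ref{c:maint} is derived as a direct corollary of Theorem \ref{t:invariance2}, and your application of \eqref{e:SHiso} with $\mu=\omega_s-\omega_0$, $\lambda=\theta_s-\theta_0$ is exactly the paper's implicit reduction. Two small remarks: the norm hypothesis in Theorem \ref{t:invariance2} is a $C^1$-bound (on the relative representative $(\mu,\lambda)$ and on $\beta$), not a $C^0$-bound, though the smoothness of the deformation gives both; and the passage from $\{h_0=0\}$ being compact to the full outer region is precisely the step that the quantitative statement Theorem \ref{t:invariance2} (via Proposition \ref{p:quantitative} and the cut-off to $M^{<a}$) is designed to take care of, so it is appropriate that you delegate it there rather than prove it ad hoc.
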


We remark that equation \eqref{Eqn QH is twisted} is much simpler than \eqref{Eqn nonexact is twisted}, because both vector spaces equal $H^*(M;\Lambda)$ and the moduli spaces defining the quantum product only depend on an almost complex structure $J$ which can be simultaneously tamed by both $\omega_s$ and $\omega_0$, when $\|\omega_s-\omega_0\|<1$. Thus the twist on the right in \eqref{Eqn QH is twisted} just ensures that $J$-holomorphic spheres are counted with the correct Novikov weight. Explicit examples of twisted quantum cohomology can be described for closed Fano toric manifolds (by Batyrev \cite{Batyrev} and Givental \cite{Givental,Givental2}) in terms of the Landau-Ginzburg superpotential suitably twisted, and similarly in non-compact settings \cite[Section 5]{Ritter6}.

The proof of Theorem \ref{c:maint} uses two key ideas.
The first is to use the map $\varphi_1$ mentioned under
Theorem \ref{t:invariance1} to reduce to the case where one modifies $\omega$ only on a compact subset. This approach bypasses the difficulty of proving a maximum principle for an $s$-dependent $\theta_s$. The second, is a new energy estimate \eqref{Equation estimate of mathcalSu in terms of energy} which allows us to run a continuation argument whilst varying the symplectic form on a compact subset. A new and unexpected feature compared to deforming Liouville manifolds \cite{Ritter2} is that even the formal twisting in Theorem \ref{c:maint} requires such an energy estimate to obtain convergence of counts of moduli spaces.
\begin{remark}\label{Remark Intro deformation literature}
The energy estimate \eqref{Equation estimate of mathcalSu in terms of energy} first appeared in the 2014 PhD thesis \cite{Benedetti}, which the first author used to prove Corollary \ref{t:invariance3} (these results were hitherto not published in a journal, which this paper rectifies). In retrospect (unknown to the author at the time) the idea involved is similar to estimates in Le-Ono \cite[Lemma 5.4]{Le-Ono}: they do not deform $\omega$, but \cite[Theorem 5.3]{Le-Ono}
builds a continuation map arising from a deformation of the symplectic vector field, similar to the one we construct in Section \ref{Section Compact deformation invariance}. This energy estimate has since appeared independently in the work of Zhang on spectral invariants for aspherical closed symplectic manifolds \cite[Section 4]{Zhang}. 
\red{The key idea of the estimate is also used at the heart of the recent work of Groman-Merry on the symplectic cohomology of twisted cotangent bundles} \cite[Theorem 6.2]{Groman-Merry} (compare with Theorem \ref{Theorem Energy estimate 2 for Floer traj}).

In addition to how this energy estimate played a role in these papers, we should also illustrate the non-triviality of Theorem \ref{c:maint} by comparing it with invariance theorems in the existing literature. A simpler version of Theorem \ref{c:maint} was proved by the second author in \cite{Ritter2} for compactly supported deformations of Liouville manifolds\red{, where} $(M,\omega_0=d\theta_0)$ is Liouville and $\omega_s=d\theta$ at infinity. Even this simple case at the time required a complicated bifurcation argument and the use of the exact action functional $A_H$ to control energy, which would not generalise to non-exact settings. 
As another example, consider the invariance result \cite[Theorem 1.7]{Viterbo} stated in the seminal paper by Viterbo, where $\omega$ is only allowed to vary amongst aspherical symplectic forms. Upon closer inspection,  filling in the details of the proof of \cite[Theorem 1.7]{Viterbo} does not appear to be straightforward. Indeed notice that the proof does not address the non-trivial issue of obtaining a priori energy estimates needed for compactness of moduli spaces of continuation solutions, and proving an $s$-dependent version of the maximum principle. Bae-Frauenfelder \cite{Bae-Frauenfelder} explain such an invariance proof for closed aspherical symplectic manifolds $M$, provided one assumes in addition that the aspherical symplectic forms $\omega_s$ have primitives with at most linear growth on the universal cover of $M$. Our Corollary \ref{t:invariance3} (a consequence of Theorem \ref{c:maint}) yields a proof of \cite[Theorem 1.7]{Viterbo} that bypasses all of these concerns, and it also immediately implies \cite[Theorem 8]{Ritter2}.
\end{remark}
To prove Theorem \ref{c:maint} for ``long deformations'', so for all $s\in [0,1]$, we require an additional condition. \red{Given a deformation $(M,\omega_s,\theta_s)$ of convex manifolds together with a choice of twisting class $\zeta_0 \in H^1(\mathcal{L}M)$, we call it a \textit{transgression-invariant deformation} if
\begin{equation}\label{Intro Transgression invariant family}
\tau (\omega_s) \in \R_{\geq 0}\cdot(\tau (\omega_0)+\zeta_0) \subset H^1(\mathcal{L}M).
\end{equation}
This condition essentially ensures that the local system of Novikov coefficients is constant in $s$. We can then break up a long deformation into ``short'' pieces and apply Theorem \ref{c:maint} appropriately twisted.} 

\begin{corollary}\label{t:invariance3}
Let $(M,\omega_s,\theta_s)$ be a transgression-invariant deformation of convex manifolds for $\zeta_0 \in H^1(\mathcal{L}M)$. \red{Then the following twisted symplectic cohomologies are well-defined for all $s\in [0,1]$ and they are related by a unital $\Lambda$-algebra isomorphism:}
\begin{equation}\label{Equation transgression invariant iso on SH}
SH^*(M,\omega_s,\theta_s)_{{\zeta}_s} \cong SH^*(M,\omega_0,\theta_0)_{\zeta_0},
\end{equation}
where $\zeta_s=\tau(\omega_0-\omega_s)+\zeta_0 \in H^1(\mathcal{L}M)$.

As a special case, if $(M,\omega_s,\theta_s)$ are convex and $\tau(\omega_s)\in H^1(\mathcal{L}M)$ is constant, then
\[
SH^*(M,\omega_1,\theta_1) \cong SH^*(M,\omega_0,\theta_0).
\]
\end{corollary}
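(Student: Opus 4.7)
The plan is to reduce Corollary \ref{t:invariance3} to a finite iteration of Theorem \ref{c:maint}, using the transgression-invariance condition \eqref{Intro Transgression invariant family} to ensure the twisted Floer theories are well-defined and the twisting classes align across successive steps.

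First I would pick a partition $0=s_0<s_1<\cdots<s_N=1$ fine enough that on each subinterval $[s_i,s_{i+1}]$ the restricted family $(M,\omega_s,\theta_s)_{s\in[s_i,s_{i+1}]}$ is a ``short'' deformation in the sense of Theorem \ref{c:maint}. By compactness of $[0,1]$ and continuity of $s\mapsto \omega_s$, this is possible because the smallness threshold in Theorem \ref{c:maint} is controlled by the $C^0$-norm of $\omega_s-\omega_{s_i}$ on compact subsets, together with the energy estimate \eqref{Equation estimate of mathcalSu in terms of energy}.

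Next I would apply a twisted analogue of Theorem \ref{c:maint} on each piece, carrying along the auxiliary twisting class $\zeta_{s_{i+1}}$. This should produce a unital $\Lambda$-algebra isomorphism
\[
SH^*(M,\omega_{s_{i+1}},\theta_{s_{i+1}})_{\zeta_{s_{i+1}}} \cong SH^*(M,\omega_{s_i},\theta_{s_i})_{\zeta_{s_{i+1}}+\tau(\omega_{s_{i+1}}-\omega_{s_i})}.
\]
Substituting the definition $\zeta_{s_{i+1}}=\tau(\omega_0-\omega_{s_{i+1}})+\zeta_0$ and using linearity of $\tau$, the right-hand twisting simplifies to $\tau(\omega_0-\omega_{s_i})+\zeta_0=\zeta_{s_i}$, so consecutive pieces are compatible. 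Composing all $N$ isomorphisms yields \eqref{Equation transgression invariant iso on SH}. For the special case when $\tau(\omega_s)$ is constant in $H^1(\mathcal{L}M)$, take $\zeta_0=0$: the condition \eqref{Intro Transgression invariant family} holds trivially and forces $\zeta_s\equiv 0$, giving the untwisted isomorphism.

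Well-definedness of each $SH^*(M,\omega_s,\theta_s)_{\zeta_s}$ follows from \eqref{Intro Transgression invariant family}: that condition is designed precisely so that $\tau(\omega_s)+\zeta_s=\tau(\omega_0)+\zeta_0$ is a fixed non-negative scalar multiple of $\tau(\omega_0)+\zeta_0$, and hence the twist weights $t^{\zeta_s\cdot u}$ combine with the symplectic action weights $t^{\int u^*\omega_s}$ on Floer trajectories to give Novikov exponents bounded below on each action window. Combined with the energy estimate \eqref{Equation estimate of mathcalSu in terms of energy}, this gives convergence of the twisted Floer differential and product in $\Lambda$.

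The main obstacle I anticipate is establishing the twisted refinement of Theorem \ref{c:maint} used in the second step. The theorem in the excerpt is stated for untwisted $SH^*$, but its proof (which passes through the map $\varphi_1$ from Theorem \ref{t:invariance1} to reduce to a compactly supported modification of $\omega$, and then uses the new energy estimate \eqref{Equation estimate of mathcalSu in terms of energy} to run the continuation argument) should extend to include a fixed auxiliary twist $\zeta$. The point to verify is that the extra weights $t^{\zeta\cdot u}$ preserve convergence of the continuation and multiplication moduli counts in $\Lambda$; since $\zeta$ is a non-negative multiple of $\tau(\omega_0)+\zeta_0$ along the deformation, this follows from the same energy estimate that controls the untwisted case.
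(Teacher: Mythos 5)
Your proposal is correct and follows essentially the same argument as the paper: cover $[0,1]$ by short subintervals on which the twisted analogue of Theorem \ref{t:invariance2}.(2) applies, use the identity $\zeta_s-\zeta_{s'}=\tau(\omega_{s'}-\omega_s)$ together with transgression-compatibility (Lemma \ref{Lemma SH twist if change by exact form}) to guarantee well-definedness and alignment of the twists, and compose the finitely many resulting isomorphisms. The one place where you are slightly more cautious than the paper — flagging the need to verify the twisted refinement of Theorem \ref{c:maint} — is addressed in the paper by appealing to Lemma \ref{Lemma SH twist if change by exact form} to well-define the further-twisted groups and then noting that the chain-level isomorphism of Theorem \ref{t:invariance2}.(2) simply carries the extra local-system weight on both sides.
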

\red{A simple application of Corollary \ref{t:invariance3} is the case of deformations of convex manifolds starting from a Liouville manifold $(M,d\theta_0,\theta_0)$ such that $\tau(\omega_s)=a(s)\cdot \tau(\omega_1)$ for some function $a:[0,1]\to[0,\infty)$ with $a(0)=0$ and $a(1)=1$. Taking $\zeta_0=\tau(\omega_1)$, we get
\[
SH^*(M,\omega_1,\theta_1) \cong SH^*(M,d\theta_0,\theta_0)_{\tau (\omega_1)},
\]
}which so far was known only for compactly supported deformations \cite{Ritter2}.

\red{One can restate the above results for \textit{convex domains}, namely compact symplectic manifolds with contact type boundary, instead of convex manifolds. Such domains have completions which are convex manifolds. We explain these details in Section \ref{Section Summary of results}.}
\subsection{Introduction: Applications}\label{ss:applications}

We decided to focus our applications on twisted cotangent bundles over surfaces, as these already display many interesting features. These are convex manifolds arising from non-compactly supported deformations of Liouville manifolds.
However, the more general setup of Theorem \ref{c:maint} is relevant in many applications which do not arise from deforming Liouville manifolds \red{such as
\begin{enumerate}[(i)]
\item negative complex line bundles, see \cite{Ritter4};
\item the non-compact Fano toric manifolds described in \cite{Ritter6}, in which the deformations of the canonical monotone toric K\"ahler form to generic nearby toric K\"{a}hler forms played a crucial role in mirror symmetry applications \cite[Section 5]{Ritter6};
\item the non-exact convex symplectic manifolds arising as crepant resolutions of isolated quotient singularities, described in the work on the McKay correspondence by McLean-Ritter \cite{McLean-Ritter}. \end{enumerate}
Within the last class of examples} there are the toric ones, which have been analysed, as far as the existence of multiple periodic Reeb orbits is concerned, in recent work of Abreu-Gutt-Kang-Macarini \cite{GuttKang}, where our invariance result played a role.

Magnetic geodesics on a closed manifold $N$ are solutions of a second-order ODE determined by a Riemannian metric $g$ and a closed 2-form $\sigma$ on $N$. The natural lifts of magnetic geodesics to the cotangent bundle $\pi:T^*N\to N$ of $N$ using the metric $g$ are the integral lines of the Hamiltonian flow on $T^*N$ for the symplectic form
\[
\omega_{\sigma}=d\theta+\pi^*\sigma 
\]
and the Hamiltonian $H(q,p)=\tfrac12g_q(p,p)$, where $\theta=p\, dq$ is the canonical 1-form.
We use this Hamiltonian description to study magnetic geodesics which are periodic. 

By now, there is a rich literature on such curves, inspired by work from the early 1980s by Arnol'd \cite{Arnold}, Novikov and Taimanov \cite{Novikov-original-paper,NovikovTaimanov}. For an extensive survey and references on this literature, we refer to Contreras-Macarini-Paternain \cite{CMP}, Ginzburg-G\"{u}rel \cite{GinzburgGurel} and Benedetti's 2014 PhD thesis \cite{Benedetti}. The current paper stems from the latter,
 namely Theorems \ref{t:nonexact} and \ref{t:cmp} on the existence of magnetic geodesics (we rectified and expanded the proofs of the existence of multiple orbits), and Corollary \ref{t:invariance3} on transgression-invariant deformations (which we strengthened to Theorem \ref{t:invariance2}). Since 2014 the field has moved on fast and it is now known in general that a periodic magnetic geodesic exists for almost all energy levels (see Asselle-Benedetti \cite{Asselle-Benedetti} and \red{Groman-Merry \cite{Groman-Merry}}).

Our note originated from trying to relate the existence of periodic magnetic geodesics in the free-homotopy class $\nu\in[S^1,N]$ with the symplectic cohomology $SH^*_\nu(D^*_rN,\omega_{\sigma})$, where $D^*_rN$ is the co-disc bundle of radius $r>0$. This symplectic invariant is well-defined if the co-sphere bundle $S^*_rN$ is of positive contact-type \cite{Ritter2} (see Remark \ref{Remark Intro Convex domain}), and is generated by the periodic magnetic geodesics of energy $\tfrac12r^2$ together with, if $\nu=0$, the cohomology of $N$. Therefore, the existence result would follow if this invariant turned out not to be zero, for $\nu\neq 0$, or not to coincide with the usual cohomology of $N$, for $\nu=0$. That such a line of argument holds for standard geodesics, where $\sigma=0$, for any closed manifold $N$, dates back to Viterbo \cite{Viterbo} and has become a standard tool in symplectic topology \cite{SeidelBiased}. 

When $\sigma$ is exact, it is a classical result that $S^*_rN$ is of positive contact-type for $\tfrac 12r^2>c_0(g,\sigma)$, where $c_0(g,\sigma)$ is the Ma\~n\'e critical value of the universal abelian cover of $N$. In this case, $(D^*_rN,\omega_{\sigma})$ is a Liouville domain (see Example \ref{ex:QL}) and the invariance of symplectic cohomology for this class of manifolds (see e.g.~\cite{SeidelBiased,Seidel-Smith}) yields
\begin{equation}\label{e:Viterbo}
SH^*_\nu(D^*_rN,\omega_{\sigma})\cong SH^*_\nu(T^*N,d\theta)\cong H_{n-*}(\mathcal L_\nu N)_{\tau(w_2(N))},
\end{equation}
where on the right one obtains the singular homology of the space of free loops in the class $\nu$ with coefficients twisted by the transgression of the second Stiefel-Whitney class. The latter is the Viterbo isomorphism \cite{Viterbo} (see Abouzaid \cite{Abouzaid-survey} for a survey). The twist can be ignored if $N$ is spin or if one works with coefficients in characteristic two.

We are therefore interested in the situation in which $(D^*_rN,\omega_{\sigma})$ is not a Liouville domain. However, if $\sigma$ is not exact and either $\dim N\geq 3$ or $N=\mathbb T^2$, then none of the $S^*_rN$ can be of contact-type since $\pi^*\sigma$ is not exact on $S^*_rN$. 

Our paper will only consider the case of surfaces $N$, as the purpose of the application is only to illustrate the deformation theorem for convex manifolds. These lead to results about the existence of closed magnetic geodesics in surfaces which are by now classical thanks to the work of Cristofaro-Gardiner and Hutchings \cite{CG-Hutchings}, which used embedded contact homology to prove the existence of two periodic magnetic geodesics, for $\dim N=2$ and $S^*_rN$ of contact-type, without non-degeneracy assumptions. 

\begin{theorem}\label{t:nonexact}
Let $N\neq\mathbb T^2$ be a closed orientable surface with a Riemannian metric $g$ and a non-exact 2-form $\sigma$. If $r>0$ is large, or for $N=S^2$ if $r>0$ is small and $\sigma$ is nowhere vanishing, then $S^*_rN$ is of positive contact-type. Under those assumptions,
\begin{equation}\label{Equation vanishing SH}
SH^*(D^*_rS^2,\omega_{\sigma})\cong SH^*(T^*S^2,d\theta)_{\tau(\pi^*\sigma)}=0,
\end{equation}
and there is a prime periodic magnetic geodesic of energy $\tfrac12r^2$. Unless one of the iterates of that orbit is transversally degenerate, there are at least two such geodesics. If $N$ has genus\;$\geq 2$,
\[
SH^*_\nu(D^*_rN,\omega_{\sigma})\cong SH^*_\nu(T^*N,d\theta)_{\tau(\pi^*\sigma)}=\left\{\begin{aligned}
H_{2-*}(N)&\quad \text{if }\nu=0,\\
H_{2-*}(S^1)&\quad \text{if }\nu\neq 0,
\end{aligned} \right.
\]
and there is at least one periodic magnetic geodesic in each free homotopy class $\nu\neq 0$.
\end{theorem}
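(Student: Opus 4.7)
The plan is to combine Corollary \ref{t:invariance3}, the twisted Viterbo isomorphism, and the unital map $c^*$ of \eqref{Equation c* map twisted} to reduce the calculation of $SH^*(D^*_rN,\omega_\sigma)$ to a loop-space homology computation and thereby to produce periodic magnetic geodesics; the multiplicity statement for $S^2$ will then come from exploiting the BV-operator on $SH^*$.

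\textbf{Contact-type and deformation.} On both $N=S^2$ and $N$ of genus $\geq 2$, the Gysin sequence for $S^*N\to N$, with nonzero Euler class, forces $\pi^*\sigma$ to be exact on a neighbourhood $U$ of $S^*_rN$: writing $\pi^*\sigma=d\lambda$ on $U$ gives a primitive $\theta_\sigma:=\theta+\lambda$ of $\omega_\sigma$ near $S^*_rN$. The positivity $\theta_\sigma(X_H)>0$ on $S^*_rN$ (where $H=\tfrac12 g(p,p)$) holds because for $r$ large the kinetic term $\theta(X_H)=r^2$ dominates the bounded $\lambda$-correction, while for $N=S^2$ with $r$ small and $\sigma$ nowhere vanishing one chooses the sign of $\lambda$ using the non-degeneracy of $\sigma$ on $S^2$ so that its term dominates with the correct sign. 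Uniformity in $s$ makes the linear family $\omega_s:=d\theta+s\pi^*\sigma$, $\theta_s:=\theta+s\lambda$, $s\in[0,1]$, a deformation of convex domains; its transgression $\tau(\omega_s)=s\tau(\pi^*\sigma)$ lies in $\R_{\geq 0}\cdot\tau(\pi^*\sigma)$, so the deformation is transgression-invariant with $\zeta_0:=\tau(\pi^*\sigma)$. Corollary \ref{t:invariance3} and the standard invariance of $SH^*$ under Liouville completion (applied with twisted coefficients) then give
\[
SH^*(D^*_rN,\omega_\sigma)\cong SH^*(T^*N,d\theta)_{\tau(\pi^*\sigma)},
\]
and the twisted Viterbo isomorphism (no $w_2$-correction as surfaces are spin) identifies this with $H_{2-*}(\mathcal{L}N;\Lambda_\eta)$, where $\Lambda_\eta$ is the local system on $\mathcal{L}N$ induced by $\tau(\pi^*\sigma)$.

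\textbf{Loop-space computation.} For $N=S^2$: $\pi_1(\mathcal{L}S^2)\cong\pi_2(S^2)=\Z$ is generated by a loop in the fibre $\Omega S^2$ whose evaluation $S^1\times S^1\to S^2$ has degree $1$, so $\eta$ sends this generator to $t^{\int_{S^2}\sigma}\neq 1$ in $\Lambda$ (as $\sigma$ is non-exact on $S^2$); a Serre spectral sequence argument for $\Omega S^2\to\mathcal{L}S^2\to S^2$ with this nontrivial twist then forces $H_*(\mathcal{L}S^2;\Lambda_\eta)=0$. For $N$ of genus $\geq 2$ and $\nu\neq 0$: the negatively curved geometry gives a deformation retraction of $\mathcal{L}_\nu N$ onto the circle of reparametrisations of the unique closed geodesic in class $\nu$, whose $\pi_1$-generator has $1$-dimensional image in $N$, pairs trivially with $\sigma$, and makes $\eta$ trivial, giving $H_{2-*}(S^1)$. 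For $\nu=0$: $\mathcal{L}_0 N$ retracts onto the constant loops $\cong N$ (as $N$ is a $K(\pi,1)$), and the same zero-area argument makes $\eta$ trivial on $\pi_1(\mathcal{L}_0 N)\cong\pi_1(N)$, yielding $H_{2-*}(N)$.

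\textbf{Existence and multiplicity.} The map $c^*$ of \eqref{Equation c* map twisted} lands in the $\nu=0$ summand of $SH^*$. For $N=S^2$, $SH^*=0$ forces $c^*(1)=0$, which in the Floer complex for an admissible Hamiltonian of slope just above $r$ means the constant orbit is a boundary of non-constant orbits---periodic magnetic geodesics of energy $\tfrac12 r^2$. For genus $\geq 2$ and $\nu\neq 0$, the non-vanishing of $SH^*_\nu$ combined with $c^*$ taking values in the $\nu=0$ summand forces at least one non-constant orbit in class $\nu$. The main obstacle is the multiplicity claim for $S^2$: assuming for contradiction a single prime orbit $\gamma$ with all iterates transversally non-degenerate, I would exploit the BV-operator $\Delta$ and the relation $\Delta 1=0$. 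The equation $1=\delta x$ forces $\Delta x$ to be a Floer cycle which must bound in $SH^*=0$, and iterating $\Delta$ together with a degree-by-degree analysis of $\Delta$'s ($\pm$-alternating) action on iterates of $\gamma$ produces an unbounded ladder of classes in the finite-dimensional span of iterates of $\gamma$, contradicting uniqueness.
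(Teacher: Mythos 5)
Your overall architecture — twisted Viterbo plus deformation invariance of $SH^*$ plus BV-operator for multiplicity — matches the paper's (the paper even remarks after Corollary~\ref{c:sh} that one can replace its direct Reeb-orbit computation with twisted Viterbo as you do). However there are two gaps.

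\textbf{Gap in the deformation for $N=S^2$, $r$ small.} You claim that the linear family $\omega_s=d\theta+s\pi^*\sigma$, $\theta_s=\theta+s\lambda$, $s\in[0,1]$, is a deformation of convex domains on the \emph{fixed} disc bundle $D^*_rN$. For $r$ large (so the magnetic term is uniformly small) this is fine, and for genus $\ge 2$ this is exactly the regime of the theorem, so your argument works there. But for $N=S^2$ with $r$ small the claim is false in general. After the paper's dilation normalisation, the relevant contact quantity is the quadratic
\[
1-\beta_q(v)\,s+s^2 f(q),
\]
cf.\ the computation in Lemma~\ref{Lemma Trick to get convexity} and the definition of $A,A^+$ in Section~\ref{ss:convexorientable}. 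Small $r$ corresponds to $s=1/r$ being \emph{large}, i.e.\ above the larger root $s_+(g,\sigma,\beta)$ of this quadratic, and your linear path in $s$ from $1/r$ down to $0$ must pass through the interval $(s_-,s_+)$ where positivity fails for a generic pair $(g,\sigma)$. So for small $r$ you leave the convex class and Corollary~\ref{t:invariance3} does not apply along that path. The paper's fix is to deform within the connected set $A^+$ through the \emph{full} parameter space $(g,\sigma,s)$ to the symmetric model $(\bar g,\bar\sigma)=(\text{round},\,K\mu)$, for which $\min f=1$, $\beta=0$, hence $1+s^2>0$ for all $s$; only afterwards does one decrease $s$ to $0$. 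You need this intermediate step, or some substitute for it.

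\textbf{The multiplicity argument is too vague.} The germ of your idea (use $\Delta$ and $\partial\Delta+\Delta\partial=0$) is the paper's second key contribution, but a single invocation of ``an unbounded ladder in the finite-dimensional span of iterates of $\gamma$'' is not an argument — the span of the iterates $\gamma^k$ is not finite-dimensional. The paper's actual proof is a case-by-case analysis on the transverse Conley--Zehnder index $\bar\mu(x)$ of the hypothetical unique prime orbit, using the iteration formula \eqref{Equation iteration formula1}: for $\bar\mu(x)\le 0$ or $\bar\mu(x)\ge 2$ a pure degree-count already contradicts $SH^*=0$; for $\bar\mu(x)=1$ hyperbolic likewise; only for $\bar\mu(x)=1$ \emph{elliptic} does one actually need the BV-operator, and there the key inputs are the triangular structure \eqref{e:deltaDelta} of $\partial$ and $\Delta$ with respect to Reeb period, the good/bad orbit coefficients in \eqref{e:ab}, and a finite truncation of the complex with specific named generators. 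Your sketch names none of these ingredients, and as written would not compile into a correct proof.

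The loop-space identifications ($H_*(\mathcal L S^2;\Lambda_\eta)=0$ via the Serre spectral sequence when $[\sigma]\neq 0$; $\tau(\sigma)=0\in H^1(\mathcal L N)$ for aspherical $N$, giving untwisted $H_{2-*}(N)$ resp.\ $H_{2-*}(S^1)$) are correct and coincide with the references the paper cites.
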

\begin{remark}
A neighbourhood of the zero-section in $(T^*S^2,\omega_{\sigma})$ and a neighbourhood of the zero section in the line bundle $\mathcal O(-2)\to\C\P^1$ are symplectomorphic by classical results (see, for instance, \cite[Theorem 3.4.10]{McDuff-Salamon-IntroToSympl} or \cite[Section 2.4A]{EliPol}). In Appendix \ref{Subsection From the magnetic TS2 to the HK TCP1}, we construct an explicit global symplectomorphism between $(T^*S^2,\omega_{\sigma})$ and $\mathcal O(-2)\to\C\P^1$, using the round metric $g$ and the area form $\sigma$. Thus, the vanishing \eqref{Equation vanishing SH} is consistent with the fact that $SH^*(\mathcal{O}_{\C\P^1}(-2))=0$ by \cite{Ritter2,Ritter4}.
\end{remark}
Now suppose that $N=\T^2$, $\sigma$ is exact, $\tfrac12r^2\leq c_0(g,\sigma)$. Contreras, Macarini and Paternain showed in \cite{CMP} that $(D^*_r\mathbb T^2,\omega_{\sigma})$ cannot be a Liouville domain; however they also list a simple class of examples \red{(which we refer to as QL-magnetic-tori)} for which $S^*_r\mathbb T^2$ is of positive contact-type for $\tfrac12r^2$ close to $c_0(g,\sigma)$.
\begin{theorem}\label{t:cmp}
Let $(g,\sigma)$ be a \red{QL-magnetic-torus} as in Section \ref{Subsection An example of a Quasi-Liouville magnetic TT2}, where $\sigma$ is an exact 2-form on $\mathbb T^2$. Then there exists an $\epsilon>0$ such that $S^*_r\mathbb T^2$ is of positive contact-type for all $\tfrac12r^2\in(c_0(g,\sigma)-\epsilon,c_0(g,\sigma)]$ and
\[
SH^*_\nu(D^*_r\mathbb T^2,\omega_{\sigma})\cong SH^*_\nu(T^*\mathbb T^2,d\theta)\cong H_{2-*}(\mathbb T^2) \qquad \textrm{ for all }\nu\in [S^1,\mathbb T^2].
\]
In particular, there is at least one periodic magnetic geodesic of energy $\tfrac12r^2$ in every non-trivial free homotopy class (and two in the non-degenerate case). If the contact form on $S^*_r\mathbb T^2$ is non-degenerate, there are infinitely many contractible periodic magnetic geodesics with energy $\tfrac12r^2$.
\end{theorem}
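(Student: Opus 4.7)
The proof proceeds by applying the invariance Corollary \ref{t:invariance3} along the linear path $\omega_s := d\theta + s\pi^*\sigma = d(\theta + s\pi^*\beta)$ for $s \in [0,1]$, where $\sigma = d\beta$. To initialise, we use the explicit QL-magnetic-torus construction of Section \ref{Subsection An example of a Quasi-Liouville magnetic TT2} to verify that $S^*_r\mathbb T^2$ is of positive contact-type at $\tfrac{1}{2}r^2 = c_0(g,\sigma)$; since positive contact-type is an open condition on $(g,\sigma,r)$, it persists on a left-open neighbourhood of $c_0$, yielding the required $\epsilon > 0$. The completion of $(D^*_r\mathbb T^2, \omega_\sigma)$ is thus a convex manifold, and in fact Quasi-Liouville in the sense of Example \ref{ex:QL}, since $\omega_\sigma$ admits the global primitive $\theta + \pi^*\beta$ but the associated Liouville flow is not positively integrable.

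Since $\pi^*\sigma = d\pi^*\beta$ is globally exact on $T^*\mathbb T^2$, its transgression vanishes in $H^1(\mathcal L \mathbb T^2)$, and the family $\omega_s$ is transgression-invariant with $\zeta_0 = 0$, provided each $\omega_s$ defines a convex structure on (the completion of) $D^*_r\mathbb T^2$. The technical heart of the argument is to guarantee this convexity throughout the path; since $\tfrac{1}{2}r^2 \leq c_0(g,\sigma)$ lies at or below the Ma\~n\'e threshold, a naive linear interpolation may exit the convex regime. We bypass this by concatenating three sub-deformations --- first enlarging the radius beyond $c_0(g, s\sigma)$ (keeping us safely in the Liouville regime), then turning on $\sigma$, then shrinking the radius back --- with each sub-deformation linking completions via Theorem \ref{t:invariance1}. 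Corollary \ref{t:invariance3} then yields the unital $\Lambda$-algebra isomorphism $SH^*_\nu(D^*_r\mathbb T^2, \omega_\sigma) \cong SH^*_\nu(T^*\mathbb T^2, d\theta)$ respecting the free-homotopy decomposition. Viterbo's isomorphism \eqref{e:Viterbo}, with trivial $w_2$-twist since $\mathbb T^2$ is parallelizable, combined with the fact that $\mathcal L_\nu\mathbb T^2 \simeq \mathbb T^2$ for every $\nu$ (because $\mathbb T^2$ is a $K(\mathbb Z^2,1)$ with abelian fundamental group), gives $SH^*_\nu \cong H_{2-*}(\mathbb T^2)$.

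The dynamical consequences then follow. For each $\nu \neq 0$, $SH^*_\nu \neq 0$, and its Floer chain complex is generated purely by periodic magnetic geodesics in class $\nu$ (no constant orbits contribute to a non-trivial free homotopy class), forcing at least one such geodesic; in the non-degenerate case, a Morse-inequality count against the total rank $\sum_i \dim H_i(\mathbb T^2) = 4$ yields at least two. The most delicate claim, infinitely many contractible orbits in the non-degenerate case, exploits the BV-operator $\Delta$ on $SH^*_0$ as advertised in the abstract: under the Viterbo isomorphism $\Delta$ corresponds to the loop-rotation operator on $H_*(\mathcal L_0\mathbb T^2)$, which is non-trivial thanks to the BV-algebra structure on the torus loop space; a finite set of prime contractible orbits would force the $\Delta$-action on the Floer complex to have bounded iteration-growth, contradicting this non-trivial BV-structure via a Hingston--Ginzburg--G\"urel-style iteration argument. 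The chief obstacle throughout is maintaining convexity along the deformation, since $\tfrac{1}{2}r^2 \leq c_0(g,\sigma)$ is precisely the regime where Liouville structures can break down.
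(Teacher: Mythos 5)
Your proof has two genuine gaps, one in the deformation argument and one in the multiplicity argument.

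\textbf{The deformation.} Your ``three-step'' scheme (enlarge the radius beyond $c_0$, turn on $\sigma$, shrink the radius back) glosses over precisely where the difficulty lives. The first two sub-deformations are fine and genuinely Liouville; the third is not. After shrinking back to $\tfrac{1}{2}r^2 \leq c_0(g,\sigma)$, the boundary contact form at the end of your path can no longer be $(\theta - \pi^*\beta)|_{\Sigma_r}$, since this fails to be a positive contact form below the Ma\~n\'e threshold (this is exactly the Contreras--Macarini--Paternain observation). To land on a convex domain you must change the contact form to $\alpha_{s,a} = (\theta - s\pi^*\beta)|_{\Sigma^g} + a\psi$ with $a \neq 0$, where $\psi$ is the angular $1$-form on $\Sigma^g \cong \T^3$. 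But then the relative class $[\omega,\theta]$ jumps from zero to non-zero during step 3, so Theorem \ref{t:invariance1} --- which you invoke for each sub-deformation --- does \emph{not} apply; one has to use Corollary \ref{t:invariance3} instead, and one has to \emph{check} that the interpolating contact forms stay positive. The paper's Theorem \ref{Theorem Set A of r,a when is convex} handles this by constructing the open set $A_* \subset \{(s,a)\}$ explicitly and exhibiting a path in it joining $(0,0)$ to $(1,a)$; it is this two-parameter convexity estimate, relying on the concrete choice of $\beta = \flat B$ and the curvature inequality \eqref{Equation kappa gamma - nu}, that does the real work. Without it your deformation is asserted, not constructed.

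\textbf{The BV-operator argument.} Your claim that ``under the Viterbo isomorphism $\Delta$ corresponds to the loop-rotation operator on $H_*(\mathcal L_0\T^2)$, which is non-trivial'' is false, and the proposed contradiction dissolves. For a torus, the evaluation $\mathcal L_0\T^2 \to \T^2$ is a homotopy equivalence, the inclusion of constant loops is an $S^1$-equivariant homotopy equivalence onto a fixed locus, and hence the BV-operator on $H_*(\mathcal L_0\T^2)$ \emph{vanishes}. That is exactly why the paper does not argue at the homology level. The paper's Corollary \ref{Corollary magnetic case on T2}.(3) is a chain-level argument: assuming finitely many prime orbits of index $1$, it uses \eqref{e:deltaDelta}, \eqref{e:ab} and the period filtration of \cite{McLean-Ritter,Zhao} to prove that $\Delta\colon CF^2 \to CF^1$ is \emph{injective on chains}, then combines $\partial\Delta + \Delta\partial = 0$ with $\partial_2 = 0$ (no degree-$3$ generators, by Corollary \ref{c:duist}) and the stabilization of $\mathrm{Im}\,\partial_1$ to force $SH^2_0$ to be infinite-dimensional as $\dim C_2 \to \infty$ (iterates of the minimizer $\delta$), contradicting the rank-$4$ computation. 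A ``Hingston--Ginzburg--G\"urel-style iteration argument'' appealing to a non-trivial BV-structure on $H_*(\mathcal L_0\T^2)$ simply has no starting point here.

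The remaining pieces (transgression-invariance via exactness of $\sigma$, vanishing $w_2$-twist, $\mathcal L_\nu\T^2 \simeq \T^2$, and the rank-$4$ count forcing two orbits in the non-degenerate case) are correct and match the paper.
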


To prove the passage from the existence of one to two (respectively infinitely many) closed orbits in Theorem \ref{t:nonexact} (respectively \ref{t:cmp}), we use a new  general scheme which is applicable in theory to many other situations. We exploit the properties of the BV-operator $\Delta: SH^*(M)\to SH^{*-1}(M)$ on symplectic cohomology (see Section \ref{Subsection BV}). To our knowledge, this approach has not appeared elsewhere in the literature.
The method more familiar to experts is to prove such results using the $S^1$-equivariant symplectic cohomology (we also sketch the proof using that method), for example see Kang \cite{Kang} and more recently \cite{GuttKang}.
However using $\Delta$ is more economical as the $S^1$-equivariant differential involves infinitely many correction terms to the ordinary differential, of which $\Delta$ is the first correction term. The BV-operator method is based on the interplay at the chain level arising from $
\partial \Delta+\Delta\partial=0$, between the degree $+1$ Floer differenial $\partial$ and the degree $-1$ BV-operator $\Delta$.
Together with some non-trivial filtration properties by McLean-Ritter \cite[Appendix D]{McLean-Ritter} and work of Zhao \cite[Equation (6.1)]{Zhao} (see Section \ref{Subsection period}), we then deduce the multiplicity results for magnetic geodesics stated above.

\begin{remark}
We mention four other Floer theories, which have been defined on twisted cotangent bundles and which are also generated by certain periodic magnetic geodesics.
The first is the Rabinowitz Floer Homology constructed by Merry \cite{Merry} when $\sigma$ has a bounded primitive on the universal cover of $N$. This ``RFH'' involves a combination of the homology and cohomology of the free loop space. Moreover, Bae and Frauenfelder \cite{Bae-Frauenfelder} established a continuation isomorphism between RFH of the twisted cotangent bundle and RFH of the ordinary cotangent bundle.
The second, developed by Frauenfelder, Merry and Paternain in \cite{Frauenfelder-Merry-Paternain,Frauenfelder-Merry-Paternain2} uses quadratic Hamiltonians satisfying the Abbondandolo-Schwarz growth condition and it is defined for forms $\sigma$ having at most linear growth on the universal cover of $N$. In this case, periodic orbits of a given period, instead of a given energy, are detected.
The third, due to Gong \cite{Gong}, assumes $\sigma$ admits a primitive of at most linear growth on the universal cover, but uses compactly supported Hamiltonians which are large enough over the zero section. Periodic magnetic geodesics for almost every level in some energy range are detected.

In all three theories, the assumptions imply that $\omega_{\sigma}$ is aspherical, meaning $\omega_{\sigma}$ integrates to zero on $\pi_2(T^*N)$. This in particular implies global exactness of $\omega_{\sigma}$ when $N$ is simply connected, so it does not apply to $T^*S^2$. More specifically, their assumptions ensure that the Floer action functional is single-valued and that no twisted coefficients appear, whereas our setup endeavours to overcome such restrictive conditions. 

\red{After the appearance of the third arXiv version of the present paper, a fourth theory due to Groman and Merry \cite{Groman-Merry} appeared. The theory works for higher dimensional $N$ and non-exact $\sigma$, in which case the twisted $T^*N$ is not convex but still geometrically bounded at infinity. By a careful choice of class of Hamiltonians, they proved that $SH_\nu^*(T*N,\omega_\sigma)$ is well-defined and that the first isomorphism in
\begin{equation}\label{Equation SH TN is H LN intro}
SH^*_\nu(T^*N,\omega_{\sigma})\cong SH^*_\nu(T^*N,d\theta)_{\tau(\pi^*\sigma)}\cong H_{n-*}(\mathcal{L}_\nu N)_{\tau(\sigma)\otimes \tau(w_2(N))}
\end{equation}
holds. Here the middle term and the second isomorphism were constructed by the second author \cite{Ritter1,Ritter3}, who also showed that
\[H_*(\mathcal{L} N)_{\tau(\beta)}=0\]
%
%
if  $\tau(\beta)\neq 0\in H^1(\mathcal{L}_0N)$ and $\pi_m(N)$ is finitely generated for each $m\geq 2$ (e.g.\,if $N$ is simply connected and $\beta\neq 0\in H^2(N)$).
More refined conditions for vanishing were proved in \cite{Albers-Frauenfelder-Oancea}. 
The vanishing result then implies the existence of periodic magnetic geodesics in this setting.}
\end{remark}

\subsection{Structure of the paper}
In Section \ref{Section Background on symplectic manifolds exact at infinity} we prove foundational results about convex manifolds, and Theorem \ref{t:invariance1} in Subsection \ref{Subsection Constant relative class implies compactly supported deformation}. In Section \ref{Section SH} we construct (twisted) symplectic cohomology for convex manifolds, and prove Theorem \ref{t:invariance2}.(1) in Subsection \ref{Subsection Existence of twisted symplectic cohomology}. In Section \ref{Section Compact deformation invariance} we prove Theorem \ref{t:invariance2}.(2) and Corollary \ref{t:invariance3}. Section \ref{Section Application: Twisted cotangent bundles of surfaces} deals with twisted cotangent bundles and proves Theorems \ref{t:nonexact} and \ref{t:cmp}. Appendix \ref{Subsection From the magnetic TS2 to the HK TCP1} constructs an explicit symplectomorphism between the twisted $T^*S^2$ and $\mathcal O_{\C\P^1}(-2)$.
Appendix \ref{Appendix1} recalls iteration formulae for Conley-Zehnder indices in dim$\,=3$ used in Section \ref{Section Application: Twisted cotangent bundles of surfaces}.
\subsection{Acknowledgements}
G.B.~ wishes to thank his PhD advisor Gabriel Paternain for suggesting to compute the symplectic cohomology of twisted cotangent bundles of surfaces and for his valuable suggestions and insights on this problem. We are very grateful to the referee for the careful reading of the manuscript and for the valuable comments, which helped us improve the paper considerably.
\section{Convex manifolds and their deformations: precise definitions}
\label{Section Summary of results}
One often constructs symplectic cohomology as an invariant associated to a closed symplectic manifold $D$ with contact-type boundary $\Sigma=\partial D$ (see \cite{OanceaEnsaios,SeidelBiased} and Remark \ref{Remark Intro Convex domain}). One then builds a non-compact symplectic manifold $M$ by attaching a conical end $\Sigma \times [0,\infty)$. For example $D=D^*N$ completes to $M=T^*N$ with $\Sigma=S^*N$. When $\omega=d\theta$ is globally exact one blurs the distinction between $SH^*(D)$ and $SH^*(M)$ because a surprisingly strong invariance result applies \cite{Abouzaid-Seidel-Altering}. This relies on the existence of a global compressing Liouville flow and a single-valued action functional, which are not available in the non-exact setting. Our paper could be entirely phrased in terms of closed manifolds $D$ \cite{Benedetti} but we decided to instead start with a given non-compact symplectic manifold $(M,\omega)$, as this is increasingly the practical setup one encounters (in the exact setup, this point of view is discussed in Seidel-Smith \cite{Seidel-Smith}). 

\begin{definition}\label{d:convex}
\red{A \textit{convex manifold} is a triple $(M,\omega,\theta)$ where $(M,\omega)$ is an open symplectic manifold admitting some exhausting (namely proper and bounded below) function $h:M\to \R$ such that $\theta$ is a $1$-form defined on $M^{\out}:=\{h\geq 0\}$ satisfying $\omega=d\theta$ and $\theta(X_h)>0$.
We call $h$ a \textit{Liouville function} for $(M,\omega,\theta)$.
On $M^{\out}$ there exists a Liouville vector field $Z$ defined by $\theta=\iota_Z \omega$. Thus $\theta(X_h)=dh(Z)$. Therefore, $\theta(X_h)>0$ if and only if $Z$ points out of the domains $M_{\leq a}:=\{h\leq a\}$ for all $a\geq0$. We set
\[
\Sigma:=\{h=0\},\qquad M^{\inn}:=\{h\leq 0\}.
\]
We call $(M,\omega,\theta)$ \textit{complete} if the flow of $Z$ is positively integrable.}\footnote{
By flowing via $\frac{1}{dh(Z)}\cdot Z$ starting from the regular level set $\Sigma=\partial M^{\inn}=h^{-1}(0)$ we obtain a foliation $M^{\out}=\sqcup_{x\in [0,\infty)} h^{-1}(x) \cong \Sigma \times [0,\infty)$ 
by diffeomorphic regular level sets of positive contact type for $\theta|_{h^{-1}(x)}$, 
using $x=h$ as the second coordinate. 
Using this, one finds that $Z$ is integrable for all positive time $\Leftrightarrow$ for some choice of $h$ we have $\int_{0}^\infty \frac{1}{\ell(x)}\, dx=\infty$ where $\ell(x):=\max_{p\in h^{-1}(x)} dh(Z)|_p$.
}
\end{definition}
\begin{definition}
\red{Given convex $(M_0,\omega_0,\theta_0)$, $(M_1,\omega_1,\theta_1)$, an \textit{isomorphism} is a symplectomorphism $\varphi:(M_0,\omega_0) \to (M_1,\omega_1)$ which at infinity satisfies $\varphi^*\theta_1=\theta_0$.}
\end{definition}
\begin{definition}
\red{A deformation of a convex manifold is a family $(M,\omega,\theta_s)$ with $s\in[0,1]$ together with a family of exhausting functions $h_s:M\to\R$ such that $H:[0,1]\times M\to\R$ given by $H(s,\cdot)=h_s$ is also exhausting.
For $0\leq a\leq b$, we will use the notation}
\[
\red{M_s^{\geq a}:=\{h_s\geq a\},\qquad M_s^{[a,b]}:=\{a\leq h_s\leq b\}.}
\]
\end{definition}
\red{For convex $(M,\omega,\theta)$, the form $\alpha = \theta|_{\Sigma}$ is a positive contact form on $\Sigma = h^{-1}(0)$, meaning $\alpha \wedge (d\alpha)^{\dim_{\C} M-1}>0$ with respect to the boundary orientation for $\partial M^{\inn}$. Moreover, there is a smooth function $\sigma:\Sigma\to (0,\infty]$ and a symplectomorphism $j$, called \textit{conical parametrisation},
\begin{equation}\label{Equation Intro j}
j: \big(\left\{(y,r)\in \Sigma\times [0,\infty): r< \sigma(y) \right\},\ d(r\alpha)\big)\; \longrightarrow \; (M^{\out},\omega)
\end{equation}
defined by the time $r$ flow of $Z$, so
\[
j(y,r) = \mathrm{Flow}^{Z}_{r}(y).
\]
In particular $j^*\theta=e^r\alpha$. Note that $(M,\omega,\theta)$ is complete if and only if we can pick $\sigma\equiv\infty$ in \eqref{Equation Intro j}.} 

\red{Any convex $(M,\omega,\theta)$ can always be embedded into the completion $(\hat M,\hat\omega,\hat\theta)$ of $M^{\inn}$ obtained by gluing $\Sigma\times[0,\infty)$ and $M^{\inn}$ via the map $j$ above. Since we will complete $M$ with respect to different pairs $(\omega,\theta)$, we will also sometimes use the more precise notation
\[
(M,\omega,\theta)^\wedge:=(\hat M,\hat\omega,\hat\theta).
\]
If $(M,\omega_s,\theta_s)$ is a deformation, we write $(\hat M_s,\hat\omega_s,\hat\theta_s)$ to indicate the corresponding completions. 
}
\begin{remark}A choice of Liouville function $h$ on $M$ determines a positive contact hypersurface
\begin{equation}\label{introsigma}
\Sigma=\{h=0\} = \partial M^{\out} \subset M, \textrm{ with contact form }\alpha:=\theta|_{\Sigma},
\end{equation}
%
%
%
%
%
but $\Sigma$, $M^{\out}$, $h$ are not fixed in Definition \ref{d:convex}.
\red{For instance, in view of \eqref{Equation Intro j}, the function $r$ is one of many examples of a Liouville function on $\hat M$.
Changing the }choice of $h$ corresponds to modifying $\Sigma = \Sigma \times \{0\} \subset \Sigma \times \R$ to a ``graph'' $$\{(y,f(y)): y\in \Sigma\}$$ of a smooth function $f: \Sigma \to \R$ (see Section \ref{Subsection varying alpha in a family}). 
The condition $\varphi^*\theta_1=\theta_0$ in Definition \ref{d:convex} implies $\varphi_*Z_1=Z_0$ at infinity, therefore for large $r$ in the coordinates \eqref{Equation Intro j} we have
\begin{equation}\label{e:normalform}
\varphi(y,r) = (\psi(y),r-f(y))
\end{equation}
%
%
where $\psi: \Sigma_0 \to \Sigma_1$ is a contactomorphism, namely a diffeomorphism satisfying $\psi^*\alpha_1=e^{f}\alpha_0$ for a smooth function $f: \Sigma_0 \to \R$.
Thus the contactomorphism class of $\Sigma$ and the contact structure $\xi=\ker \alpha\subset T\Sigma$ are invariants under isomorphism, but the positive contact form $\alpha$ is not. We show in Section \ref{Subsection varying alpha in a family} how $\alpha$ can be varied arbitrarily subject to those invariants (Remark \ref{Remark can vary alpha arbitrarily}).
\end{remark}

\begin{example}\label{ex:QL}
As we do not require completeness, if $\theta(X_h)>0$ only holds near $\Sigma=h^{-1}(0)$, we still obtain a convex submanifold: $(h^{-1}(-\infty,\epsilon),\omega,\theta)$ for small $\epsilon>0$.

 Recall $M$ is \textit{Liouville} if in addition to \eqref{Equation Intro convex condition}, $\theta$ extends to a global primitive of $\omega$. However, there are convex $(M,\omega,\theta)$ for which $\omega$ is globally exact, but the given $\theta$ does not extend to a global primitive; we call these \textit{Quasi-Liouville}. 
By Lemma \ref{Lemma Liouville condition}, the obstruction for a convex manifold to be Liouville is the \textit{relative class} \eqref{Equation intro relative class}.
If $[\omega]=0\in H^2(M)$, this obstruction becomes $[j^*\lambda-e^r\alpha]\in H^1(M^{\out})$ where $\lambda$ is the given global primitive of $\omega$. The $T^*\T^2$ of Theorem \ref{t:cmp} yield Quasi-Liouville examples.
%

The relative class \eqref{Equation intro relative class} is preserved under isomorphisms, meaning $\varphi^*[\omega_1,\theta_1]=[\omega_0,\theta_0]$ viewed as classes in\footnote{It will hold in the relative de Rham cohomology $H^2(M_0,M_0^{\out})$ if we choose $M_0^{\out}$ and $M_1^{\out}=\varphi(M_0^{\out})$ so that $\varphi^*\theta_1=\theta_0$ holds on $M_0^{\out}$, see Lemma \ref{Lemma varphit preserves relative class}.} $H^2_c(M)$. This is an obstruction to the existence of an isomorphism (Example \ref{Example Harris}) which did not appear for Liouville manifolds as $[\omega,\theta]=0$ in that case.
\end{example}

The symplectic cohomology of a convex manifold is defined by the direct limit \eqref{Equation Intro SH is direct lim} over Floer continuation maps of the Floer cohomologies computed for Hamiltonians $H:M\to \R$ which at infinity are linear in $R=e^r$ of larger and larger slopes. This class of Hamiltonians depends on $j$, $h$ and $\Sigma$ in \eqref{Equation Intro j} and \eqref{introsigma}, as they determine the radial coordinate $R$, and these choices are not unique given $(M,\omega,\theta)$. A different choice corresponds to changing $R$ to $e^{f(y)}R$ for a smooth function $f: \Sigma \to \R$. The proof of Theorem \ref{Theorem invariance under iso of SH} constructs an isomorphism between the two symplectic cohomologies computed for Hamiltonians that are linear for the respective radial coordinate.

We will always tacitly assume that 
$(M,\omega)$ is \textit{weakly monotone}\footnote{At least one of the following holds:
(i) $c_1$ vanishes on $\pi_2(M)$, (ii) $\omega$ vanishes on $\pi_2(M)$, (iii) $(M,\omega)$ is monotone (so $c_1|_{\pi_2(M)}=\lambda \omega|_{\pi_2(M)}$  for some $\lambda>0$), or (iv) the minimal Chern number $|N|\geq \dim_{\C} M - 2$, where $\langle c_1(TM), \pi_2(M)\rangle = N \Z$. This is equivalent to requiring that for each $A\in H_2(M,\Z)$, if
$3-\dim_{\C}M \leq c_1(A) < 0$ then $\omega(A)\leq 0.$
} as that ensures the Floer cohomology groups are well-defined \cite{Hofer-Salamon} without appealing to advanced machinery, such as Kuranishi structures or Polyfolds. Magnetic $(T^*N,\omega_{\sigma})$ are always weakly monotone as their first Chern class vanishes. Following \cite{Ritter4,Ritter6} we work over coefficients in the \textit{Novikov field} $\Lambda$ involving `series' in a formal variable $t$ (see Section \ref{Subsection Novikov field, Action $1$-form, Energy}).

\begin{remark}\label{Remark Intro Convex domain}
A \textit{convex domain} $(D,\omega,\alpha)$ is a compact symplectic manifold $(D,\omega)$ such that $\alpha$ is a positive contact form on the boundary $\Sigma=\partial D$ with $d\alpha=\omega|_{T\Sigma}$ (see Lemma \ref{Lemma Trick to get convexity}).  Given $(D,\omega)$, the possible such choices of $\alpha$ determine a convex set. 
%
%
%
Convex domains arise as ``sublevel sets'' of convex manifolds:
\[
D=M^{\inn}\cup \big\{(y,r)\in \Sigma \times [0,\infty): y\in \Sigma,\ r\leq f(y)\big\}\subset \hat{M}
\]
for any smooth $f:\Sigma \to [0,\infty)$.
By convention, $SH^*(D):=SH^*(M)$ for any completion $M=\hat{D}$ of $D$ (this is well-defined by Theorem \ref{Theorem invariance under iso of SH}, Remark \ref{Remark choice of primitive for convex domain}).
We abusively speak of isomorphisms of such $D$ when we mean isomorphisms of their completions. Also, $SH^*(D)$ is invariant under deformations of the contact form by Corollary \ref{t:invariance3}. 
\end{remark}

We can now state the quantitative version of Theorem \ref{c:maint}.

\begin{theorem}\label{t:invariance2} 
Let $(M,\omega,\theta)$ be convex. For $a>0$, consider the convex manifold $M^{<a}:=\{h<a\}\subset M$, and let $M^{(0,a)}:=\{0<h<a\}$. There are $\epsilon,\epsilon'>0$ depending on $a$, such that for all closed two-forms $\beta$ on $M^{<a}$ exact on $M^{(0,a)}$ with $\Vert\beta\Vert_{C^1(M^{<a})}<\epsilon$,

\begin{enumerate}
\item The twisted symplectic cohomology $SH^*(M,\omega,\theta)_{\tau(\beta)}$ can be constructed using a suitable cofinal subfamily of radial Hamiltonians (see Section \ref{Subsection Existence of twisted symplectic cohomology}). It is a unital $\Lambda$-algebra admitting a canonical unital $\Lambda$-algebra homomorphism
\[
c^*: QH^*(M,\omega)_{\beta} \to SH^*(M,\omega,\theta)_{\tau(\beta)}.
\]
\item For any representative $[\mu,\lambda]\in H^2(M^{<a},M^{(0,a)})$ of the class $[\beta]\in H^2(M^{<a})$, with $\|(\mu,\lambda)\|_{C^1(M^{<a},M^{(0,a)})}<\epsilon'$, the triple
$(M^{<a},\omega+\mu,\theta+\lambda)$
is convex and admits a unital $\Lambda$-algebra isomorphism
\begin{equation}\label{Eqn nonexact is twisted precise}
SH^*(M^{<a},\omega+\mu,\theta+\lambda)\cong SH^*(M,\omega,\theta)_{\tau(\beta)}
\end{equation}
commuting with the canonical $c^*$ maps from $QH^*(M,\omega+\mu)\cong QH^*(M,\omega)_{\beta}$. Thus, the group on the right in \eqref{Eqn nonexact is twisted precise} is independent of the choice of the cofinal family of Hamiltonians.
\end{enumerate}
\end{theorem}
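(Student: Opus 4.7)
The plan is to prove parts (1) and (2) together, exploiting the two ideas outlined after Theorem \ref{c:maint}: first, reducing to compactly supported modifications of $\omega$ via a Moser-type isomorphism, and second, controlling all moduli spaces via the new energy estimate \eqref{Equation estimate of mathcalSu in terms of energy}. The smallness hypothesis $\|\beta\|_{C^1(M^{<a})}<\epsilon$ plays a dual role: it provides a workable representative pair $(\mu,\lambda)$ for a Moser argument, and it gives the Novikov convergence needed to make the twisted differential well-defined in the first place.

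For part (1), I would fix a cofinal sequence of radial Hamiltonians $H_n$ linear at infinity with slopes tending to infinity, and on each $CF^*(H_n)$ define a twisted differential counting Floer cylinders $u$ with Novikov weight $t^{\int u^*\omega + \int u^*\beta}$, where $\int u^*\beta$ is the natural pairing of $\tau(\beta)\in H^1(\mathcal{L}M)$ with the loop traced by $u$. Convergence in $\Lambda$ is the essential point: combining the standard Floer energy identity with a Cauchy--Schwarz estimate, and using that $\|\beta\|_{C^1}<\epsilon(a)$, one obtains for each pair of generators a lower bound of the form $\int u^*(\omega+\beta) \geq (1-C\epsilon)\,E(u) - C'$. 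This forces the Novikov exponents to be bounded below and to accumulate only at $+\infty$, yielding a well-defined twisted differential in $\Lambda$. The $c^*$ map and the pair-of-pants product are constructed by the same scheme on appropriate moduli spaces (half-cylinders capped by spheres, and three-punctured spheres), and converge by the same estimate. Taking direct limits over $H_n$ then produces the unital $\Lambda$-algebra structure.

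For part (2), I would first use the freedom in choosing a relative representative $(\mu,\lambda)$ of $[\beta]\in H^2(M^{<a},M^{(0,a)})$. An excision-type argument identifies this relative group with a compactly supported cohomology on the inner region, so I can replace $(\mu,\lambda)$ by a cohomologous $(\mu',\lambda')$ with $\mu'$ supported in a compact subset of $\{h<0\}$ and $\lambda'\equiv 0$ near the outer region. The passage from $(\omega+\mu,\theta+\lambda)$ to $(\omega+\mu',\theta+\lambda')$ is realised by a Moser flow $\varphi_1$ exactly as in the proof of Theorem \ref{t:invariance1}, which is an isomorphism of convex manifolds and induces an isomorphism of $SH^*$ by Theorem \ref{Theorem invariance under iso of SH}. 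Hence it suffices to treat the case where $\mu$ is compactly supported in the interior. In this compact setup I would construct continuation-type chain maps in both directions by interpolating $\omega_s=\omega+\chi(s)\mu$ for a cutoff $\chi$ while keeping the Hamiltonian fixed. A trajectory $u$ of this $s$-dependent equation contributes Novikov weight $t^{\int u^*(\omega+\mu)}$ on the perturbed side and $t^{\int u^*\omega}\cdot t^{\int u^*\beta}$ on the $\tau(\beta)$-twisted side; these match because $\mu$ is cohomologous to $\beta$ in the relevant sense. The crucial input is \eqref{Equation estimate of mathcalSu in terms of energy} applied to this variation, which yields simultaneously compactness of the moduli spaces and the convergence of the resulting chain map in $\Lambda$. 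Composing the two continuation maps and using the standard homotopy-of-homotopies argument produces the isomorphism \eqref{Eqn nonexact is twisted precise}. Commutativity with the $c^*$ maps and with the quantum isomorphism $QH^*(M,\omega+\mu)\cong QH^*(M,\omega)_\beta$ follows by realising both identifications on the same auxiliary moduli spaces, using an almost complex structure simultaneously tamed by $\omega$ and $\omega+\mu$ (possible by smallness of $\mu$).

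The main obstacle will be the continuation step in part (2): deforming the symplectic form rather than merely the Hamiltonian was handled in \cite{Ritter2} only for compactly supported deformations of Liouville manifolds, where an exact action functional carried the energy control. Extending to the present non-exact setting genuinely requires \eqref{Equation estimate of mathcalSu in terms of energy}, and the delicate point is to verify that this estimate survives the simultaneous variation of $\omega_s$ and $X_{H,s}$. A secondary and less anticipated difficulty is that the very construction of the twisted differential in part (1) also requires such an estimate for Novikov convergence, in contrast to Liouville settings where formal twisting is automatic. The threshold $\epsilon(a)$ must be chosen so that both energy estimates close up, and identifying this common threshold will be the key quantitative work.
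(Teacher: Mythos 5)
Your overall architecture matches the paper's: reduce to compactly supported modifications via a Moser isomorphism, then run a continuation argument with the symplectic form varying while the Hamiltonian is held fixed, with the new energy estimate \eqref{Equation estimate of mathcalSu in terms of energy} doing the work on both the well-definedness of the twisted differential and the compactness of the continuation moduli spaces. You also correctly flag that, unlike the Liouville case, even the formal twisting of part (1) needs this estimate. However, there is a genuine gap in how you set up part (1).

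The estimate \eqref{Equation estimate of mathcalSu in terms of energy}, namely $|\mathcal{S}_u|\leq E(u)/\delta^2$ where $\mathcal{S}_u$ is the set of $s$ for which $u(s,\cdot)$ meets the support $N$ of $\beta$, is a Chebyshev bound that relies on the following uniform lower bound for the Floer operator: any loop $x$ meeting $N$ satisfies $\|F_H(x)\|>\delta$ for a $\delta>0$ \emph{uniform over the entire cofinal family}. This is condition \textbf{(H1)} in Section \ref{Subsection Existence of twisted symplectic cohomology}, and it does not hold for a generic cofinal family of radial Hamiltonians. The paper must construct the family $H_k$ so that all the $H_k$ agree with a fixed $C^2$-small Morse function $H_0$ on a neighbourhood of $N$, with $\mathrm{Crit}(H_0)$ pushed off $N$, and then prove \textbf{(H1)} via an Arzel\`a-Ascoli argument for loops close to $\mathrm{Crit}(H_0)$ combined with a Cauchy-Schwarz lower bound for loops that traverse the annulus between $N$ and a slightly larger region (Lemma \ref{Lemma cofinal family of hams}). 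Your proposal writes ``fix a cofinal sequence of radial Hamiltonians'' and then invokes ``a Cauchy-Schwarz estimate'' to get $|\tau_\beta(u)|\leq C\epsilon\,E(u)$; but a naive Cauchy-Schwarz only gives a term of order $\|\beta\|\cdot\|dH\|_{C^0(N)}\sqrt{E(u)}\cdot\sqrt{|\mathcal{S}_u|}$, and without \textbf{(H1)} you have no control on $|\mathcal{S}_u|$, so the bound does not close. This is why the theorem statement specifies ``a suitable cofinal subfamily,'' and it is also why a priori one cannot even claim $SH^*(M,\omega,\theta)_{\tau(\beta)}$ is independent of the cofinal family: the paper only establishes this independence a posteriori, via the isomorphism with $SH^*(M^{<a},\omega+\mu,\theta+\lambda)$ in part (2) (see Remark \ref{Remark dependent on cofinal family}). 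Your proof should make the construction of the special cofinal family explicit and explain why \textbf{(H1)} can be achieved; without it, the Novikov-convergence claim in part (1) and therefore the entire statement is unproved.

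A smaller point: in your part (2) discussion you describe the continuation map's Novikov weight as having to ``match'' on the two sides because $\mu$ is cohomologous to $\beta$. There is no matching to verify; the weight is dictated by the definition of the twisted complexes in \eqref{Equation psi map definition}, where the map is $\psi(y)=\sum_u\epsilon(u)\,t^{(\tau_\omega+\tau_\beta)(u)}t^{\mathcal H(x)-\mathcal H(y)}x$, and the point is simply that this is a well-defined chain map between $CF^*(\omega+\beta,H,J_1)$ and $CF^*(\omega,H,J_0)_{\tau(\beta)}$ once one has the energy estimate of Theorem \ref{Theorem Energy estimate 1}. The subtlety in that estimate, which you do correctly anticipate, is that the geometric energy is computed with an $s$-dependent metric, so one must use the equivalence of norms on a compact set to transfer the argument from Theorem \ref{Theorem Energy estimate 2 for Floer traj}.
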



As mentioned in the Introduction, to obtain \eqref{Eqn nonexact is twisted} for all $s$, one breaks down the ``long'' deformation into ``short'' pieces and uses a twisted version of Theorem \ref{c:maint}. Unfortunately convergence issues in the Novikov field prevent such twistings in general, so we introduce a good notion of local systems on $\mathcal{L}M$ which work.

\begin{definition}\label{Definition Intro transgression compatible twist}
	For convex $(M,\omega,\theta)$, call ${\zeta} \in H^1(\mathcal{L}M)$ \textit{trangression-compatible} if \[{\zeta} \in \R_{>-1}\cdot [\tau(\omega)] \;\;\textrm{ or }\;\; [\tau(\omega)]=0 \in H^1(\mathcal{L}M)\] Equivalently, choosing representative $1$-forms $\tau_{\omega}$ and $\eta$ in the classes $\tau(\omega)$ and ${\zeta}$, for some $c\geq 0$ there is a function $\mathcal{K}: \mathcal{L}M\to \R$, with
	\begin{equation}\label{Equation Intro transgression compatible}
	\tau_\omega= c(\tau_\omega+{\eta}) + d\mathcal{K}.
	\end{equation}
	A family $(M,\omega_s,\theta_s;{\zeta}_s)$ is \textit{transgression-compatible} if $s\mapsto\tau(\omega_s)+\zeta_s \in H^1(\mathcal{L}M)$ is constant and for each $s\in [0,1]$, $\zeta_s$ is transgression-compatible with $\omega_s$. This implies that, after twisting by $\zeta_s$, the system of Novikov coefficients is constant in $s$.
\end{definition}

We prove that symplectic cohomology is always defined for transgression-compatible twists, and we prove that Theorem \ref{c:maint} implies Theorem \ref{t:invariance3}. A simple example is if $\tau(\omega)=0\in H^1(\mathcal{L}M)$: in that case any twisting $\zeta\in H^1(\mathcal{L}M)$ is allowed.

Finally, we remark that deformations of convex domains $(D,\omega_s)$ reduce to the problem of deformations of convex manifolds. Observe that the completions of $(D,\omega_s)$, as $s$ varies, are all diffeomorphic to the completion $M=D\cup (\partial D \times [0,\infty))$ of $(D,\omega_0)$. Pulling back the data via this identification we obtain a corresponding deformation $(M,\omega_s,\theta_s)$ of complete convex manifolds. Thus Theorems \ref{t:invariance1} and \ref{t:invariance2} apply.

\section{Convex manifolds and their deformations: the proofs}
\label{Section The role of the relative class}
\label{Section Background on symplectic manifolds exact at infinity}
\subsection{Relative cohomology}
\label{Subsection Remarks about relative cohomology}
Let $(M,\omega,\theta)$ be convex (Definition \ref{d:convex}). Following Bott-Tu \cite[Section I.7,p.78]{Bott-Tu}, we define the relative de Rham cohomology $H^*(M,M^{\out})$ via the mapping cone $C^*(M,M^{\out})=\Omega^*(M)\oplus \Omega^{*-1}(M^{\out})$ with differential
\[
D(x,y):=(dx,j^*x-dy),
\]
where $j^*x$ is the pull-back of $x$ to $M^{\out}$ via \eqref{Equation Intro j}.
So $[\omega,\theta] \in H^2(M,M^{\out})$.
\begin{lemma}\label{Lemma Liouville condition}
A convex manifold $(M,\omega,\theta)$ is Liouville if and only if the relative class $[\omega,\theta] \in H^2(M,M^{\out})$ vanishes. 
\end{lemma}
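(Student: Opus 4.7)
The statement is essentially a tautology once one unpacks the mapping cone differential, but let me lay out how I would organize the two implications.

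For the ``only if'' direction, suppose $(M,\omega,\theta)$ is Liouville, so there exists a global $\tilde\theta\in\Omega^1(M)$ with $d\tilde\theta=\omega$ and $j^*\tilde\theta=\theta$. Then the pair $(\tilde\theta,0)\in C^1(M,M^{\out})$ satisfies $D(\tilde\theta,0)=(d\tilde\theta,\,j^*\tilde\theta-d\,0)=(\omega,\theta)$, so $[\omega,\theta]=0$.

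For the ``if'' direction, suppose $[\omega,\theta]=0$. Then there exist $\eta\in\Omega^1(M)$ and $f\in\Omega^0(M^{\out})$ with $D(\eta,f)=(\omega,\theta)$, i.e.\ $d\eta=\omega$ globally and $j^*\eta-df=\theta$ on $M^{\out}$. Thus $\eta$ is already a global primitive of $\omega$, but it fails to coincide with $\theta$ at infinity by the exact error $df$. The natural fix is to extend $f$ to a smooth function $\tilde f:M\to\R$ and replace $\eta$ by $\eta-d\tilde f$, which remains a global primitive of $\omega$ and restricts to $\theta$ on $M^{\out}$, certifying that $(M,\omega,\theta)$ is Liouville.

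The only genuine content is the existence of the smooth extension $\tilde f$ of $f|_{M^{\out}}$ to all of $M$. Since $M^{\out}=\{h\geq 0\}$ is a closed submanifold-with-boundary of $M$ whose boundary $\Sigma=h^{-1}(0)$ admits a collar $\Sigma\times(-\epsilon,\epsilon)\hookrightarrow M$ (for instance via the flow of a vector field transverse to $\Sigma$), I would extend $f$ through the collar using a cut-off function in the normal coordinate, then extend by zero (or any smooth function) on the rest of $M^{\inn}$ and glue via a partition of unity. This is a standard manoeuvre, not an obstacle; there is no topological obstruction to extending a smooth function from a closed subset of a manifold. Hence the two directions are fully symmetric in the mapping cone formalism, and no further analytic input is required.
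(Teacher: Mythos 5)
Your proof is correct and follows essentially the same approach as the paper: both directions unwind the mapping-cone differential, and the ``if'' direction adjusts the global primitive by $d$ of a smooth extension of $f$. The extra detail you give on constructing the extension (collar, cutoff, partition of unity) is sound but standard, and the paper simply asserts it in passing.
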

\begin{proof}
For Liouville $(M,d\theta,\theta)$, $[d\theta,\theta]=[D(\theta,0)] = 0 \in H^2(M,M^{\out})$.
Conversely, assume $[\omega,\theta] = 0$. Then there is a $1$-form $\lambda\in\Omega^1(M)$ and a function $f\in\Omega^0(M^\out)$ such that $\omega=d\lambda$ on $M$, and $\theta-\lambda=df$ on $M^\out$. After extending $f$ to a smooth function on $M$, we obtain an extension $\theta:=\lambda+d f\in \Omega^1(M)$ with $\omega=d\theta$ on $M$.
\end{proof}
Relative cohomology is isomorphic to compactly supported cohomology $H^*_c(M)$. Indeed, we have maps $H^*_c(M\setminus M^\out)\to H^*(M,M^\out)\to H^*_c(M)$ given by
\begin{equation}\label{e:isocoh}
[x]\mapsto [x,0],\qquad [x,y]\mapsto [x-d(\rho y)],
\end{equation}
where $\rho:M\to [0,1]$ is any function with $\rho=1$ at infinity and $\rho=0$ near $M^{\inn}$. Up to the isomorphism $H^*_c(M\setminus M^\out)\cong H^*_c(M)$ induced by the inclusion $M\setminus M^\out\subset M$, the maps in \eqref{e:isocoh} are isomorphisms that are inverse to each another.

\begin{lemma}\label{Lemma varphit preserves relative class}
If $\varphi_t$ is an isotopy of $M$, $\varphi_0=\mathrm{id}$, 
%
%
then $[\varphi_t^*x]=[x]$ in $H^*_c(M)$ for any $[x]\in H^*_c(M)$.
If in addition $\varphi_t(M^{\out})=M^{\out}$, 
%
%
then
$[\varphi_t^*x,\varphi_t^*y]=[x,y]$ in $H^*(M,M^{\out})$ for any $[x,y]\in H^*(M,M^{\out})$. 
\ifdraft 
\else 
\fi
\end{lemma}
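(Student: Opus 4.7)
The plan is to prove both statements by a standard Cartan-formula integration, taking care of support issues. Let $X_t$ be the time-dependent vector field generating $\varphi_t$, so $\frac{d}{dt}\varphi_t = X_t\circ\varphi_t$. For any closed form $x$ on $M$, Cartan's magic formula gives
\[
\frac{d}{dt}\varphi_t^*x = \varphi_t^*\mathcal{L}_{X_t}x = \varphi_t^*(d\iota_{X_t}x) = d\bigl(\varphi_t^*\iota_{X_t}x\bigr),
\]
so integrating yields $\varphi_t^*x - x = d a_t$ with $a_t := \int_0^t \varphi_s^*(\iota_{X_s}x)\,ds$.

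For the first statement, I need $a_t \in \Omega^{*-1}_c(M)$. If $K := \supp x$ is compact, then $\supp(\varphi_s^*\iota_{X_s}x) \subset \varphi_s^{-1}(K)$, and the union $\bigcup_{s\in[0,t]}\varphi_s^{-1}(K)$ is the image of the compact set $[0,t]\times K$ under the continuous map $(s,p)\mapsto \varphi_s^{-1}(p)$, hence compact. So $a_t$ is compactly supported and $[\varphi_t^*x]=[x]$ in $H^*_c(M)$.

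For the second statement, recall the mapping cone differential $D(a,b)=(da,\, j^*a - db)$, and closedness of $(x,y)$ says $dx=0$ and $j^*x=dy$. The hypothesis $\varphi_t(M^{\out})=M^{\out}$ forces $X_s$ to be tangent to $M^{\out}$ along $\Sigma$, so $\varphi_s|_{M^{\out}}$ is a well-defined isotopy of $M^{\out}$ and the analogous primitive on $M^{\out}$,
\[
b_t := \int_0^t \varphi_s^*(\iota_{X_s}y)\,ds \in \Omega^{*-1}(M^{\out}),
\]
makes sense. Set $a := a_t$ (no support condition is needed here). Using $j^*x = dy$, pull the first integral back to $M^{\out}$ and apply Cartan's formula $\iota_{X_s}dy = \mathcal{L}_{X_s}y - d\iota_{X_s}y$:
\[
j^*a_t = \int_0^t \varphi_s^*(\iota_{X_s}dy)\,ds = \int_0^t \varphi_s^*(\mathcal{L}_{X_s}y)\,ds \;-\; d\!\int_0^t \varphi_s^*(\iota_{X_s}y)\,ds = (\varphi_t^*y - y) - db_t.
\]
Thus $D(a_t,b_t) = (da_t,\, j^*a_t - db_t) = (\varphi_t^*x - x,\, \varphi_t^*y - y)$, proving $[\varphi_t^*x,\varphi_t^*y]=[x,y]$ in $H^*(M,M^{\out})$.

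There is no real obstacle here: the only points needing care are (i) the compact-support verification for the primitive $a_t$ in the first half, which is a continuity/compactness argument, and (ii) the fact that $b_t$ is a well-defined form on the manifold-with-boundary $M^{\out}$, which follows from the hypothesis $\varphi_t(M^{\out})=M^{\out}$ forcing $X_t$ to be tangent to $\partial M^{\out}=\Sigma$.
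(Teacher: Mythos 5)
Your approach — differentiating $\varphi_t^*x$ via Cartan's formula and integrating the resulting primitive — is a valid and rather more explicit alternative to what the paper does. The paper disposes of the first claim by citing the functorial homotopy-invariance property of $H^*_c$ for proper maps (Bott--Tu, p.~26), and of the second by invoking the relative analogue of Bott--Tu Cor.~4.1.2; in the authors' draft notes the chain homotopy is constructed abstractly from the projection $\pi^*:H^*(M,M^{\out})\to H^*(M\times\R,M^{\out}\times\R)$ and its sections $s_t^*$. Your computation instead writes down the primitives directly, and your observation that the compact-support constraint on $a_t$ follows from compactness of the image of $[0,t]\times\supp x$ under $(s,p)\mapsto\varphi_s^{-1}(p)$ is exactly the point which the phrase ``with respect to proper maps'' in the paper is silently handling. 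Likewise you correctly note that the hypothesis $\varphi_t(M^{\out})=M^{\out}$ is what makes $X_t$ tangent to $\Sigma$, so that $b_t$ lives on $M^{\out}$.

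There is, however, a sign slip in the last line. You derived
\[
j^*a_t \;=\; (\varphi_t^*y - y) - d b_t,
\]
and then wrote $D(a_t,b_t)=(d a_t,\, j^*a_t - d b_t)=(\varphi_t^*x-x,\,\varphi_t^*y-y)$. But $j^*a_t - d b_t = (\varphi_t^*y - y) - 2\,db_t$, which is not $\varphi_t^*y-y$. The correct cochain to exhibit is $(a_t,\,-b_t)$: then $j^*a_t - d(-b_t) = j^*a_t + d b_t = \varphi_t^*y - y$, so $D(a_t,-b_t)=(\varphi_t^*x-x,\,\varphi_t^*y-y)$ as required. Equivalently, define $b_t := -\int_0^t \varphi_s^*(\iota_{X_s}y)\,ds$ from the start. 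With that one-character fix, the proof is complete and self-contained.
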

\begin{proof}
This follows from the functorial properties of $H^*_c$ \cite[p.~26]{Bott-Tu} (with respect to proper maps). The second claim is the relative analogue of \cite[Cor.~4.1.2, p.~35]{Bott-Tu}. 
\end{proof}
\ifdraft
\begin{proof} This is the relative analogue of \cite[Cor.4.1.2,p.35]{Bott-Tu}. It holds because the projection $\pi^*: H^*(M,M^{\out})\cong H^*(M\times \R,M^{\out}\times \R)$ is inverse to any of the sections $s_t^*$. In particular, $\mathrm{id}-\pi^*s_t^*=kD+Dk$ at the chain level, by taking $k=(-1)^{*-1}K$ for the $K$ described by Bott-Tu \cite[p.35]{Bott-Tu} and using the general convention that an operator $k$ on $\Omega^*(M),\Omega^*(M^{\out})$ also acts on $C^*(M,M^{\out})$ by $k(x,y)=(kx,(-1)^{\mathrm{deg}(k)}ky)$.
\end{proof}
\else 
\fi
\red{Given a convex manifold $(M,\omega,\theta)$, two different choices of exhausting function give rise to different conical ends, thus two choices of decomposition
\[
M=M^{\inn}_1\cup M^{\out}_1=M^{\inn}_2\cup M^{\out}_2.
\]
However, there is an isotopy $\varphi_t:M\to M$ supported near the conical end such that $\varphi_t(M_1^{\out})=M_2^{\out}$ (by simply isotopying the radial coordinate, i.e. moving along flowlines of $Z$). The class $[\omega,\theta]$ viewed in $H^2_c(M)$ via \eqref{e:isocoh} therefore does not depend on the choice of exhausting function, nor the decomposition $M=M^{\inn}\cup M^{\out}$.
\\
\indent
Given a family of convex manifolds $(M,\omega_s,\theta_s)_{s\in [0,1]}$, observe that we may redefine any given exhausting function for $(M,\omega_s,\theta_s)$ so that $M^{\out}\subset M_s^{\out}$, where $M^{\out},M_s^{\out}$ are respectively conical ends for $(M,\omega_0,\theta_0),(M,\omega_s,\theta_s)$. We can thus view the family of classes $[\omega_s,\theta_s]\in H^2(M,M_s^{\out})$ as elements in $H^2(M,M^{\out})$ via pull-back, and it is meaningful to consider the case when this is a ``constant'' class. Equivalently:
}

\begin{definition}
\red{For a family of convex manifolds $(M,\omega_s,\theta_s)$ we say that the family of relative classes $[\omega_s,\theta_s]$ is constant if their images in $H^2_c(M)$ are constant via the isomorphism  $H^2(M,M_s^{\out})\cong H^2_c(M)$ of \eqref{e:isocoh}.}
\end{definition}
\begin{lemma}\label{Lemma relative class constant implies compactly supported primitive}
\red{Let $(M,\omega_s,\theta_s)$ be a family of convex manifolds with constant class $[\omega_s,\theta_s]$.}
Let $\varphi_s: M \to M$ be an isotopy, $\varphi_0=\mathrm{id}$, and $\varphi_s^*\theta_s=\theta_0$ at infinity. Then, for a smooth family of compactly supported $1$-forms $\lambda_s$,
\begin{equation}\label{Eqn difference forms is d cpt sup}
\varphi_s^*\omega_s-\omega_0 = d\lambda_s.
\end{equation}
\end{lemma}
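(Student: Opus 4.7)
The plan is to show that $\eta_s := \varphi_s^*\omega_s - \omega_0$ defines the zero class in $H^2_c(M)$ and then to extract a smoothly $s$-dependent compactly supported primitive. First, $\eta_s$ is closed, and differentiating the identity $\varphi_s^*\theta_s = \theta_0$ on the conical end gives $\varphi_s^*\omega_s = \omega_0$ there, so $\eta_s$ has compact support. After possibly shrinking $M^{\out}$ uniformly in $s$ (using compactness of $[0,1]$), I may assume that on $M^{\out}$ simultaneously for every $s \in [0,1]$: $\theta_s$ is defined, $\omega_s = d\theta_s$, $\varphi_s^*\theta_s = \theta_0$, and $\varphi_t(M^{\out})$ stays inside the domain of $\theta_s$ for all $t$. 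In particular, $\eta_s$ vanishes on $M^{\out}$.

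Under the isomorphism $H^2_c(M) \cong H^2(M,M^{\out})$ of \eqref{e:isocoh}, $[\eta_s]$ is represented by the cone-cocycle $(\eta_s, 0)$. The key algebraic step is the decomposition
\[
(\eta_s,\, 0) \;=\; (\omega_s - \omega_0,\; \theta_s - \theta_0) \;+\; (\varphi_s^*\omega_s - \omega_s,\; \varphi_s^*\theta_s - \theta_s),
\]
whose second entries sum to $\varphi_s^*\theta_s - \theta_0 = 0$ by the choice of $M^{\out}$. The first summand represents $[\omega_s,\theta_s] - [\omega_0,\theta_0]$, which vanishes in $H^2(M, M^{\out})$ by hypothesis, so equals $D(\alpha_s, \beta_s)$ for some $(\alpha_s, \beta_s) \in \Omega^1(M) \oplus \Omega^0(M^{\out})$; by a standard partition-of-unity plus Poincar\'e lemma argument such a primitive can be chosen smoothly in $s$. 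For the second summand I would apply Cartan's magic formula to the flow: writing $X_t$ for the generator of $\varphi_t$ and setting
\[
\gamma_s := \int_0^s \varphi_t^*(\iota_{X_t}\omega_s)\,dt, \qquad h_s := \int_0^s \varphi_t^*(\iota_{X_t}\theta_s)\,dt,
\]
a direct computation using $d\omega_s = 0$ yields $D(\gamma_s, -h_s) = (\varphi_s^*\omega_s - \omega_s,\; \varphi_s^*\theta_s - \theta_s)$.

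Combining these identities gives $(\eta_s, 0) = D(\alpha_s + \gamma_s,\; \beta_s - h_s)$, so $\alpha_s + \gamma_s$ is a smooth family of global primitives of $\eta_s$ whose restriction to $M^{\out}$ equals $d(\beta_s - h_s)$. Extending the function $\beta_s - h_s$ from $M^{\out}$ to a smoothly $s$-dependent function $\tilde F_s$ on $M$ via a partition of unity, the form
\[
\lambda_s := \alpha_s + \gamma_s - d\tilde F_s
\]
still satisfies $d\lambda_s = \eta_s$, but now vanishes identically on $M^{\out}$ and is therefore compactly supported. This yields the required smooth family.

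The main obstacle I anticipate is bookkeeping rather than anything conceptual: one must ensure that the integrals defining $\gamma_s, h_s$ are well-defined (i.e.\ the flow $\varphi_t$ keeps the base point in the domain of $\theta_s$), and that the primitive $(\alpha_s, \beta_s)$ provided abstractly by the hypothesis can be selected smoothly in $s$. Both are handled by shrinking $M^{\out}$ at the start and by fixing once and for all a continuous chain-level homotopy inverse for $D$ on the subspace of coboundaries.
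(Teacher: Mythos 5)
Your argument is correct, but it takes a genuinely different route from the paper's and is worth comparing. The paper first converts everything to compactly supported cohomology: via the isomorphism \eqref{e:isocoh}, the hypothesis becomes ``$[\omega_s - d(\rho_s\theta_s)]\in H^2_c(M)$ is constant,'' and then Lemma \ref{Lemma varphit preserves relative class} (homotopy invariance of $H^*_c$) is applied to conclude that $[\varphi_s^*\omega_s - d(\varphi_s^*(\rho_s\theta_s))]$ is constant as well; from this one reads off $\lambda_s := \lambda_s' + \varphi_s^*(\rho_s\theta_s) - \rho_0\theta_0$ directly. By contrast, you stay inside the Bott--Tu mapping cone and split the cone-cocycle $(\eta_s,0)$ into the ``deformation'' part $(\omega_s-\omega_0,\,\theta_s-\theta_0)$, which is a coboundary by the constant-class hypothesis, and the ``isotopy'' part $(\varphi_s^*\omega_s-\omega_s,\,\varphi_s^*\theta_s-\theta_s)$, for which you exhibit an explicit cone-level primitive $(\gamma_s,-h_s)$ via Cartan's magic formula. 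The second step effectively re-proves, in the relative setting, the homotopy invariance that the paper outsources to Lemma \ref{Lemma varphit preserves relative class}. Your version is thus more self-contained and makes the role of the flow explicit, at the cost of more bookkeeping (shrinking $M^{\out}$ uniformly so $\varphi_t(M^{\out})$ stays inside the domain of $\theta_s$, and ensuring the integrals defining $\gamma_s,h_s$ make sense there); the paper's version is shorter because it reuses an earlier lemma and lets the $\rho$-cutoff absorb the translation between relative and compactly supported cohomology. Both approaches meet the smoothness-in-$s$ issue the same way, via a continuous linear lift of $d$ on exact forms, which the paper handles in a footnote and which you should not omit.
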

\begin{proof}
By \eqref{e:isocoh}, $[\omega_s-d(\rho \theta_s)]\in H^*_c(M)$ is constant. By Lemma \ref{Lemma varphit preserves relative class}, $[\varphi_s^*\omega_s - d(\varphi_s^*(\rho \theta_s))]$ is constant in $H^*_c(M)$. It follows that
$\varphi_s^*\omega_s-\omega_0=d(\varphi_s^*(\rho \theta_s)-\rho \theta_0) + d\lambda_s'$
 for some compactly supported one-forms\footnote{That $\lambda_s'$ can be chosen smoothly in $s$ follows because a smooth path $[0,1]\to \Omega^2_{exact}(M,M^{\out})$ can be lifted to $[0,1]\to \Omega^1(M,M^{\out})$ via the smooth surjective linear map $d: \Omega^1(M,M^{\out})\to \Omega^2_{exact}(M,M^{\out}) \subset \Omega^2(M,M^{\out})$. Locally this corresponds to choosing a smooth family of orthogonal complements to $\ker d$, which can be achieved by taking orthogonal complements with respect to a choice of Riemannian metric.} $\lambda_s'$. By construction $\varphi_s^*(\rho \theta_s)-\rho \theta_0$ vanishes at infinity (since $\varphi_s^*\theta_s=\theta_0$ there), so we may take $\lambda_s=\lambda_s'+\varphi_s^*(\rho \theta_s)-\rho \theta_0$ in \eqref{Eqn difference forms is d cpt sup}.
%
%
\end{proof}
\begin{lemma}\label{Lemma from iso get betas are exact}
\red{Let $(M,\omega_s,\theta_s)$ be a family of convex manifolds with constant class $[\omega_s,\theta_s]$. Let $(\hat{M}_s,\hat{\omega}_s,\hat{\theta}_s)$ denote the completion of $(M,\omega_s,\theta_s)$. Suppose we are given an isomorphism $\varphi_s:(\hat M_0,\hat\omega_0+\beta_s,\hat\theta_0)\to (\hat M_s,\hat\omega_s,\hat\theta_s)$ for compactly supported $2$-forms $\beta_s$ on $\hat M_0$, satisfying $\varphi_s|_{M^{\inn}_0} = \mathrm{id}_{M_0^{\inn}}$ and $\varphi_0=\mathrm{id}_{\hat M_0}.$
Then there is a family of compactly supported $1$-forms $\lambda_s$ on $\hat{M}_0$, with $\lambda_0=0$, such that $\beta_s=d\lambda_s$.
}
\end{lemma}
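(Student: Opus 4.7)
I would prove this by transferring the problem to the fixed manifold $\hat M_0$ via pullback along $\varphi_s$, then applying Lemma~\ref{Lemma relative class constant implies compactly supported primitive} with the trivial isotopy. Concretely, define the family $(\hat M_0,\varphi_s^*\hat\omega_s,\varphi_s^*\hat\theta_s)=(\hat M_0,\hat\omega_0+\beta_s,\varphi_s^*\hat\theta_s)$. Since each $\varphi_s$ is an isomorphism of convex manifolds, this is a smooth family of complete convex manifolds on the fixed underlying manifold $\hat M_0$, with $\varphi_s^*\hat\theta_s=\hat\theta_0$ at infinity and joint exhausting function $h_s\circ\varphi_s$ obtained by pulling back the exhausting functions of $\hat M_s$.

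The crux is to verify that the relative class $[\hat\omega_0+\beta_s,\varphi_s^*\hat\theta_s]\in H^2_c(\hat M_0)$ is independent of $s$. The hypothesis gives constancy of $[\omega_s,\theta_s]$ in $H^2_c(M)$, which via the natural isomorphisms $H^2_c(M)\cong H^2_c(\hat M_s)$ (induced by the open embeddings $M\hookrightarrow\hat M_s$, for which the added conical end contributes nothing to compactly supported cohomology) lifts to constancy of $[\hat\omega_s,\hat\theta_s]$. To transfer this across the $s$-dependent $\varphi_s$ (which goes between different manifolds, so Lemma~\ref{Lemma varphit preserves relative class} does not apply directly), I would pick a smooth family of diffeomorphisms $\Psi_s:\hat M_s\to\hat M_0$ with $\Psi_0=\mathrm{id}$, which exists since all $\hat M_s$ share the same smooth type. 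Then $\Psi_s\circ\varphi_s$ is a genuine isotopy of $\hat M_0$ starting at the identity, so Lemma~\ref{Lemma varphit preserves relative class} says its pullback acts trivially on $H^2_c(\hat M_0)$; combining with the constancy of $[\hat\omega_s,\hat\theta_s]$ delivers constancy of $\varphi_s^*[\hat\omega_s,\hat\theta_s]=[\hat\omega_0+\beta_s,\varphi_s^*\hat\theta_s]$ on $\hat M_0$.

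With the constancy established, Lemma~\ref{Lemma relative class constant implies compactly supported primitive} applied to the above family with the trivial isotopy $\mathrm{id}_{\hat M_0}$ (noting that $\mathrm{id}^*(\varphi_s^*\hat\theta_s)=\varphi_s^*\hat\theta_s=\hat\theta_0=\varphi_0^*\hat\theta_0$ at infinity) yields $(\hat\omega_0+\beta_s)-\hat\omega_0=\beta_s=d\lambda_s$ for a smooth family of compactly supported $1$-forms $\lambda_s$ on $\hat M_0$. The smooth-lift construction used in the proof of that lemma can be initialized with $\lambda_0=0$, since $\beta_0=0$ follows from $\varphi_0=\mathrm{id}_{\hat M_0}$, giving the desired normalization.

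The main obstacle I expect is the second step: pulling the constant relative class from the varying $\hat M_s$ back to $\hat M_0$ through a family of isomorphisms between different manifolds. The trivialization trick reduces it to an isotopy of the single manifold $\hat M_0$, after which Lemma~\ref{Lemma varphit preserves relative class} applies, but one must keep careful track of the canonical identifications $H^2_c(M)\cong H^2_c(\hat M_s)\cong H^2_c(\hat M_0)$ used along the way, and of the fact that $\varphi_s|_{M^{\inn}_0}=\mathrm{id}$ ensures that the trivializations $\Psi_s$ can be chosen compatibly with these identifications.
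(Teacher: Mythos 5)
Your proposal is correct and uses essentially the same strategy as the paper: both introduce an auxiliary family of diffeomorphisms $\Psi_s\colon\hat M_s\to\hat M_0$ (the paper calls them $\psi_s$) fixing $M^{\inn}_0$, use the composed isotopy $\Psi_s\circ\varphi_s$ of $\hat M_0$ to invoke Lemma~\ref{Lemma varphit preserves relative class}, and then conclude via Lemma~\ref{Lemma relative class constant implies compactly supported primitive}. The only cosmetic difference is the accounting: you verify constancy of the pulled-back relative class $[\hat\omega_0+\beta_s,\varphi_s^*\hat\theta_s]$ on $\hat M_0$ and then feed Lemma~\ref{Lemma relative class constant implies compactly supported primitive} the trivial isotopy, whereas the paper feeds it the isotopy $\psi_s\circ\varphi_s$ directly; the two bookkeepings produce the identical identity $\beta_s=\varphi_s^*\hat\omega_s-\hat\omega_0=d\lambda_s$.
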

\begin{proof}
\red{The manifolds $\hat{M}_s$ are all diffeomorphic to $M$ through a family of proper diffeomorphisms $\psi_s$, with $\psi_0=\mathrm{id}$, which are equal to the identity on $M_0^{\inn}$ (indeed the conical ends attached to $M$ in the completion construction can be radially isotoped towards $M$). Composing yields an isotopy $\psi_s\circ \varphi_s: \hat{M}_0 \to \hat{M}_0$ which equals the identity on $M_0^{\inn}$. Lemma \ref{Lemma relative class constant implies compactly supported primitive} therefore implies that $\beta_s=(\hat{\omega}_0+\beta_s)-\hat{\omega}_0=d\lambda_s$ for some compactly supported $1$-forms $\lambda_s$ on $\hat{M}_0$, with $\lambda_0=0$.
}\end{proof}

\begin{remark}
If $H^1(\Sigma)=0$ over $\R$, $[\omega_s,\theta_s]\in H^2(M,M^{\out})$ is constant in $s$ precisely if $[\omega_s]\in H^2(M)$ is.
This also holds if $H^1(M)\to H^1(M^{\out})\cong H^1(\Sigma)$ is surjective, by the long exact sequence $H^1(M)\to H^1(M^{\out})\to H^2(M,M^{\out})\to H^2(M)$.
\end{remark}
\subsection{Deformations at infinity:\,varying the contact form}
\label{Subsection varying alpha in a family}
%
%
Let $(M,\omega,\theta)$ be convex \red{and complete} (Definition \ref{d:convex}). 
We often blur the distinction between the domain and image of $j$ in \eqref{Equation Intro j} so $\Sigma\equiv \partial M^{\inn}$ and $\alpha=\theta|_{\Sigma}$.
Before proving Theorem \ref{t:invariance1}, we will prove a technical result which shows that any deformation of contact forms $(\alpha_s)_{0\leq s\leq 1}$ on $\Sigma$ with $\alpha_0=\alpha$ can be recovered by a deformation $i_s$ of the conical parametrisation \eqref{Equation Intro j}. Applying Gray's stability theorem to the family $\alpha_s$ yields a smooth family of functions $f_s: \Sigma\to \R$ and a contact isotopy $\psi_s:\Sigma\to \Sigma$, $\psi_0=\mathrm{id}$, with 
$\psi_s^*\alpha_s=e^{f_s}\alpha_0.$
\red{Given $\epsilon>0$, for any constant $c$ with $c>\epsilon$ and $c> \epsilon + \max f_s$, we obtain the subset %
\[
M_{\alpha_s}^{\out}=\big\{(\psi_s(y),r): y\in \Sigma,\ r\geq c -f_s(y)\big\}\subset \Sigma\times (\epsilon,\infty)
\]
having as boundary 
\[
\Sigma_{\alpha_s} =\big\{(\psi_s(y),c -f_s(y)): y\in \Sigma\big\}.
\]}
%
%

\begin{lemma}\label{Lemma family of alpha gives parametrization}
The contact forms $\alpha_s$ determine conical parametrisations $i_s$ of the end $M_{\alpha_s}^{\out}$ of $M$, for the contact manifold $(\Sigma,\alpha_s)$ (see the figure below). Namely,
\begin{center} \begin{picture}(0,0)%
\includegraphics{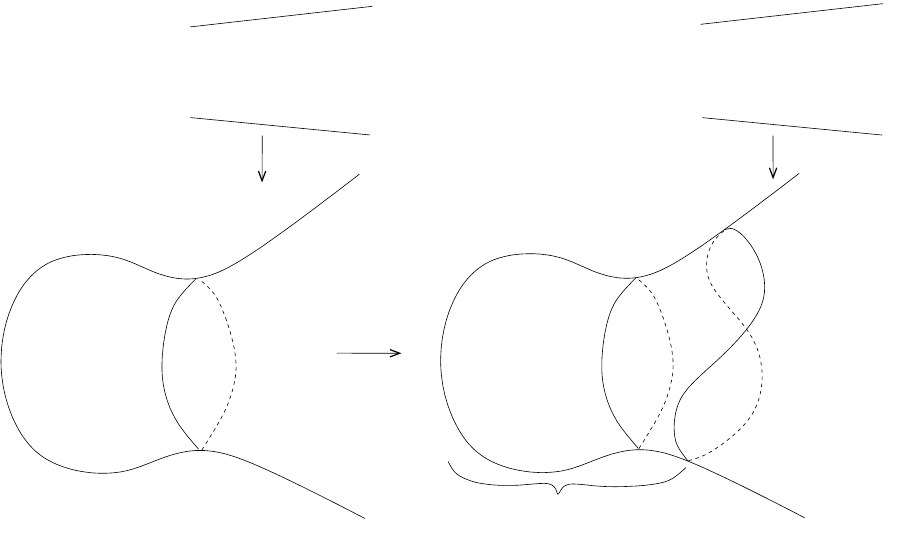}%
\end{picture}%
\setlength{\unitlength}{1533sp}%
\begingroup\makeatletter\ifx\SetFigFont\undefined%
\gdef\SetFigFont#1#2#3#4#5{%
  \reset@font\fontsize{#1}{#2pt}%
  \fontfamily{#3}\fontseries{#4}\fontshape{#5}%
  \selectfont}%
\fi\endgroup%
\begin{picture}(11174,6575)(362,-2758)
\put(2833,2623){\makebox(0,0)[lb]{\smash{{\SetFigFont{8}{9.6}{\rmdefault}{\mddefault}{\updefault}{\color[rgb]{0,0,0}$d(e^r \alpha)$}%
}}}}
\put(2826,3088){\makebox(0,0)[lb]{\smash{{\SetFigFont{8}{9.6}{\rmdefault}{\mddefault}{\updefault}{\color[rgb]{0,0,0}$\Sigma\times [0,\infty)$}%
}}}}
\put(909,-1406){\makebox(0,0)[lb]{\smash{{\SetFigFont{8}{9.6}{\rmdefault}{\mddefault}{\updefault}{\color[rgb]{0,0,0}$\omega$}%
}}}}
\put(9496,1798){\makebox(0,0)[lb]{\smash{{\SetFigFont{8}{9.6}{\rmdefault}{\mddefault}{\updefault}{\color[rgb]{0,0,0}$i_s$}%
}}}}
\put(2472,-560){\makebox(0,0)[lb]{\smash{{\SetFigFont{8}{9.6}{\rmdefault}{\mddefault}{\updefault}{\color[rgb]{0,0,0}$\Sigma_0$}%
}}}}
\put(909,-981){\makebox(0,0)[lb]{\smash{{\SetFigFont{8}{9.6}{\rmdefault}{\mddefault}{\updefault}{\color[rgb]{0,0,0}$M^{in}$}%
}}}}
\put(9136,3089){\makebox(0,0)[lb]{\smash{{\SetFigFont{8}{9.6}{\rmdefault}{\mddefault}{\updefault}{\color[rgb]{0,0,0}$\Sigma\times [c,\infty)$}%
}}}}
\put(9136,2639){\makebox(0,0)[lb]{\smash{{\SetFigFont{8}{9.6}{\rmdefault}{\mddefault}{\updefault}{\color[rgb]{0,0,0}$d(e^r \alpha_s) \textrm{ where}$}%
}}}}
\put(11521,2639){\makebox(0,0)[lb]{\smash{{\SetFigFont{8}{9.6}{\rmdefault}{\mddefault}{\updefault}{\color[rgb]{0,0,0}$\alpha_s=e^{f_s}\alpha$}%
}}}}
\put(3106,1808){\makebox(0,0)[lb]{\smash{{\SetFigFont{8}{9.6}{\rmdefault}{\mddefault}{\updefault}{\color[rgb]{0,0,0}$i_0$}%
}}}}
\put(6340,-1402){\makebox(0,0)[lb]{\smash{{\SetFigFont{8}{9.6}{\rmdefault}{\mddefault}{\updefault}{\color[rgb]{0,0,0}$\omega$}%
}}}}
\put(6352,-982){\makebox(0,0)[lb]{\smash{{\SetFigFont{8}{9.6}{\rmdefault}{\mddefault}{\updefault}{\color[rgb]{0,0,0}$M^{in}$}%
}}}}
\put(4653,-266){\makebox(0,0)[lb]{\smash{{\SetFigFont{8}{9.6}{\rmdefault}{\mddefault}{\updefault}{\color[rgb]{0,0,0}$\mathrm{id}$}%
}}}}
\put(9946,170){\makebox(0,0)[lb]{\smash{{\SetFigFont{8}{9.6}{\rmdefault}{\mddefault}{\updefault}{\color[rgb]{0,0,0}$\Sigma_{\alpha_s}=\textrm{``graph"}(f_s)$}%
}}}}
\put(6880,-2635){\makebox(0,0)[lb]{\smash{{\SetFigFont{8}{9.6}{\rmdefault}{\mddefault}{\updefault}{\color[rgb]{0,0,0}$M_{\alpha_s}^{in}$}%
}}}}
\put(10525,-1389){\makebox(0,0)[lb]{\smash{{\SetFigFont{8}{9.6}{\rmdefault}{\mddefault}{\updefault}{\color[rgb]{0,0,0}$M_{\alpha_s}^{out}$}%
}}}}
\put(10542,-1863){\makebox(0,0)[lb]{\smash{{\SetFigFont{8}{9.6}{\rmdefault}{\mddefault}{\updefault}{\color[rgb]{0,0,0}$d\theta$}%
}}}}
\put(3689,-1851){\makebox(0,0)[lb]{\smash{{\SetFigFont{8}{9.6}{\rmdefault}{\mddefault}{\updefault}{\color[rgb]{0,0,0}$d\theta$}%
}}}}
\put(3684,-1389){\makebox(0,0)[lb]{\smash{{\SetFigFont{8}{9.6}{\rmdefault}{\mddefault}{\updefault}{\color[rgb]{0,0,0}$M^{out}$}%
}}}}
\end{picture}%
 \end{center}
%
%
\begin{align*}
i_s&:\big(\Sigma\times [c,\infty),e^{r_s}\alpha_s,d(e^{r_s}\alpha_s)\big)\to (M_{\alpha_s}^{\out},\omega|_{M_{\alpha_s}^{\out}},\theta),\\  i_s&(y,r_s)=\mathrm{Flow}^Z_{r_s-c}\big(\psi_s(y),c-f_s(y)\big)=
\big(\psi_s(y),r_s - f_s(y)\big).
\end{align*}
In particular $i_s^*\theta=i_s^*(e^r\alpha_0)=e^{r_s}\alpha_s$ and $i_s(\Sigma\times\{c\})=\Sigma_{\alpha_s}$. The new data induced by $i_s$ is $Z_s=Z$ with radial coordinate $r_s = r+f_s(y)\in [c,\infty)$.
\end{lemma}
\begin{proof} 
We readily compute\[
i_s^*(e^r \alpha_0)|_{(y,r_s)} = e^{r_s - f_s(y)} \psi_s^*\alpha_0 = e^{r_s - f_s(y)} e^{f_s(y)}\alpha_s= e^{r_s}\alpha_s.\qedhere
\]
\end{proof}
\begin{remark}\label{Remark can vary alpha arbitrarily}
\red{The family $i_s$ yields a family $M^{\inn}_{\alpha_s}:=\overline{M\setminus M^{\out}_{\alpha_s}}\subset M$ of diffeomorphic manifolds.} This yields an isotopy $\varphi_s: M \to M$, $\varphi_0=\mathrm{id}$, such that $\varphi_s(M^{\inn})=M^{\inn}_{\alpha_s}$, \red{$\varphi_s(\Sigma\times\{c\})=\Sigma_{\alpha_s}$}. Now $(M,\varphi_s^*\omega,\varphi_s^*\theta)$ is a deformation of $(M,\omega,\theta)$ admitting the conical parametrisation $\varphi_s^{-1}\circ i_s$ on $M^{\out}$ modeled on $(\Sigma,\alpha_s)$. Thus, up to isomorphisms of convex manifolds, we can arbitrarily vary the contact form $\alpha_s$ on $\Sigma$ subject to fixing the contact structure $\xi=\ker \alpha$ (using Lemma \ref{Lemma varphit preserves relative class}, Theorem \ref{t:invariance1}).
\end{remark}

  
%
%
%
%

\subsection{Deformations are compactly supported up to isomorphisms}
\label{Subsection Comparing the two types of variations}

\begin{proposition}\label{Lemma varphis isotopy on M}
Let $(M,\omega_s,\theta_s)$ be a family of convex manifolds with completions $(\hat M_s,\hat\omega_s,\hat\theta_s)$. There is a family of compactly supported closed $2$-forms $\beta_s$ on $\hat M_0$ with $\beta_0=0$ such that $(\hat M_0,\hat\omega_0+\beta_s,\hat\theta_0)$ is convex and admits a family of isomorphisms 
\begin{equation*}
\varphi_s:(\hat M_0,\hat\omega_0+\beta_s,\hat\theta_0)\to (\hat M_s,\hat\omega_s,\hat\theta_s),\qquad \varphi_s|_{M^{\inn}_0} = \mathrm{id}_{M_0^{\inn}},\qquad \varphi_0=\mathrm{id}_{\hat M_0}.
\end{equation*}
Note that $\beta_s = \varphi_s^*\hat\omega_s - \hat\omega_0$.
\end{proposition}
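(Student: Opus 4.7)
I would implement the Seidel--Smith two-step construction sketched after Theorem~\ref{t:invariance1}: build $\varphi_s$ explicitly on the conical end via Gray stability, and extend by the identity on $M_0^{\inn}$. First, I would identify the contact hypersurfaces $\Sigma_s := \partial M_s^{\inn}$ by a smooth family of diffeomorphisms $\sigma_s \colon \Sigma_0 \to \Sigma_s$ with $\sigma_0 = \mathrm{id}$, obtained by flowing along a suitably normalised vector field on $M$ transverse to the level sets of the exhausting function $H$. Then $\sigma_s^*(\theta_s|_{\Sigma_s})$ is a smooth family of contact forms on the fixed manifold $\Sigma_0$, and Gray's stability theorem yields a smooth isotopy $\psi_s \colon \Sigma_0 \to \Sigma_0$, $\psi_0 = \mathrm{id}$, together with smooth functions $f_s \colon \Sigma_0 \to \R$, $f_0 = 0$, satisfying $(\sigma_s\psi_s)^*(\theta_s|_{\Sigma_s}) = e^{f_s}\alpha_0$, where $\alpha_0 = \theta_0|_{\Sigma_0}$.

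Following Lemma~\ref{Lemma family of alpha gives parametrization}, I would define $\varphi_s$ on a cylinder $\Sigma_0 \times [c, \infty) \subset \hat M_0$ (for $c$ large enough that $r - f_s(y) \ge 0$) by
\[
\varphi_s(y, r) := j_s\bigl(\sigma_s\psi_s(y), \, r - f_s(y)\bigr) \in \hat M_s^{\out},
\]
where $j_s$ is the conical parametrisation of $\hat M_s^{\out}$. The lemma guarantees $\varphi_s^* \hat\theta_s = e^r\alpha_0 = \hat\theta_0$ on this cylinder, hence also $\varphi_s^*\hat\omega_s = \hat\omega_0$ there. I would then extend $\varphi_s$ smoothly to a diffeomorphism $\hat M_0 \to \hat M_s$ equal to the identity on $M_0^{\inn}$, by a cutoff interpolation on the compact collar $M_0^{\out} \cap \{r \le c\}$, arranged so that $\varphi_0 = \mathrm{id}_{\hat M_0}$ tautologically (since $\psi_0, \sigma_0$ and $f_0$ are trivial). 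Setting $\beta_s := \varphi_s^*\hat\omega_s - \hat\omega_0$ makes it closed (as a difference of closed forms pulled back by a diffeomorphism), and compactly supported since it vanishes on $\Sigma_0 \times [c, \infty)$, with $\beta_0 = 0$. Convexity of $(\hat M_0, \hat\omega_0 + \beta_s, \hat\theta_0)$ is then immediate: on the outer cylinder $\beta_s = 0$, so $\hat\omega_0 + \beta_s = d\hat\theta_0$ and the Liouville data $(Z_0, h_0)$ witnessing convexity at infinity are unchanged.

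I anticipate the main technical obstacle to be the smooth interpolation across the compact collar: the extension of $\varphi_s$ must produce a smooth family of diffeomorphisms starting at the identity for $s = 0$, equal to the identity on $M_0^{\inn}$, while preserving the outer-end formula $\varphi_s^*\hat\theta_s = \hat\theta_0$ on $\{r \ge c\}$. This is the content of the Seidel--Smith trick, and the technicalities should be manageable by a standard combination of cutoff functions applied to the Gray-stability flow with careful smoothness tracking in $s$, perhaps combined with the isotopy-extension lemma to globalise the boundary diffeomorphism $\sigma_s\psi_s$ to an isotopy of $\hat M_0$ supported near $\Sigma_0 \times [0, c]$.
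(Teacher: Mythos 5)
Your proposal correctly identifies the Seidel--Smith mechanism (Gray stability on the contact boundary, reparametrisation of the conical end matching the primitives, extension by the identity on the inner domain) which is exactly the idea behind the paper's proof, and your pullback computation showing $\varphi_s^*\hat\theta_s=\hat\theta_0$ on $\Sigma_0\times[c,\infty)$ is right. The proposal differs from the paper's argument in one substantive way and leaves one step less precise than it should be.

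The paper keeps the hypersurface fixed at $\Sigma_0=\partial M_0^{\out}$ and defines $j_s$ by flowing along $Z_s$ starting from $\Sigma_0$. This forces a restriction to $s$ close to $0$, because $Z_s$ is only guaranteed transverse to the fixed $\Sigma_0$ in a neighbourhood of $s'=0$ (or of any $s'$); the global statement then follows by covering $[0,1]$ with finitely many such intervals and composing (the ``adiabatic steps''). You instead carry the hypersurface along with the deformation via $\sigma_s\colon\Sigma_0\to\Sigma_s=\partial M_s^{\inn}$ and pull the contact forms back to a fixed $\Sigma_0$ before applying Gray stability. This is a genuine alternative which avoids the transversality obstruction altogether and could in principle treat the whole of $[0,1]$ at once, trading the paper's finite-cover argument for a slightly heavier construction of $\sigma_s$ and of the collar-filling map. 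If the collar step is handled carefully this is arguably cleaner, so I would not call it a defect.

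Where I think the proposal is incomplete is precisely the step you flag: extending $\varphi_s$ across $M_0^{\out}\cap\{r\le c\}$. ``Cutoff interpolation'' is not quite the right operation here, since the intermediate map must land in $\hat M_s$, which carries no affine structure in which to interpolate. The paper's remedy is concrete in two respects. First, it constructs an auxiliary isotopy $F_s$ normalising the parametrisations so that $j_s=j_0$ on $\Sigma\times[0,\epsilon]$; this is what makes the explicit formula $\varphi_s(y,r)=j_s(\psi_{a_s(r)}^{-1}(y),b_s(y,r))$ collapse to the identity for small $r$. Second, the restriction to small $s$ is what keeps $\psi_{a_s(r)}^{-1}$ $C^1$-close to the identity uniformly in $(y,r)$, which is what guarantees the interpolating formula is a diffeomorphism. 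Your mention of the isotopy-extension lemma is the right substitute for the explicit formula, but you should make it do more work than ``perhaps combined with'': you need to check that for $c$ large the region of $\hat M_s$ bounded by $\Sigma_0$ and by the graph-type hypersurface $\varphi_s(\Sigma_0\times\{c\})$ is diffeomorphic to $\Sigma_0\times[0,1]$, that this family of collars together with its two prescribed boundary parametrisations varies smoothly in $s$, and that isotopy extension (applied relative to both boundary components) therefore yields a smooth family of diffeomorphisms with the required boundary data. Alternatively, you could import the paper's normalisation $F_s$ into your setup and retain the paper's explicit interpolation after an adiabatic subdivision, at which point you would recover the paper's proof essentially verbatim but with the fixed $\Sigma_0$ replaced by the moving $\Sigma_s$.

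Everything else in the write-up -- closedness and compact support of $\beta_s=\varphi_s^*\hat\omega_s-\hat\omega_0$, nondegeneracy of $\hat\omega_0+\beta_s=\varphi_s^*\hat\omega_s$, convexity of $(\hat M_0,\hat\omega_0+\beta_s,\hat\theta_0)$ because $\beta_s$ vanishes on the outer cylinder -- is correct.
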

\begin{proof}
\red{Let us fix some $b>0$ such that $M_s^{\geq b}\subset M_0^{\out}$ for all $s\in[0,1]$. As the statement of the proposition does not depend on the choice of exhausting function used for $(M,\omega_s,\theta_s)$, we may assume $b=0$, so $M_s^{\out}\subset M_0^{\out}$.
Let $s'\in[0,1]$ be arbitrary. We claim that there exists $\epsilon_{s'}>0$ such that if $s''\in (s'-\epsilon,s'+\epsilon)$, then there exists a closed 2-form $\beta_{s',s''}$ on $\hat M_{s'}$ with compact support and an isomorphism $\varphi_{s',s''}:(\hat M_{s'},\hat\omega_{s'}+\beta_{s',s''},\theta_{s'})\to(\hat M_{s''},\hat\omega_{s''},\hat\theta_{s''})$ such that $\varphi_{s',s''}|_{M_0^{\inn}}=\mathrm{id}_{M_0^{\inn}}$. 
If the claim holds, then we can extract a finite subcover of the compact interval $[0,1]$ from the cover $\{(s'-\epsilon_{s'},s'+\epsilon_{s'})\}_{s'\in[0,1]}$. The isomorphism $\varphi_s$ in the statement of the present proposition is then obtained by a finite composition of isomorphisms of the type $\varphi_{s',s''}$. We now prove the claim. For ease of notation we just do the case $s'=0$.\\
\indent
As $M_s^{\out}\subset M_0^{\out}$,} we may assume that $Z_s$ is defined on $M^{\out}$, in particular it is non-vanishing there. Since $Z$ is transverse to $\Sigma=\partial M^{\out}$ and outward pointing, the same will hold for $Z_s$ for small $s$. By positively integrating $Z_s$ starting from $\Sigma$ we obtain a family of
conical parametrisations $j_s:\{(y,r):0\leq r<\sigma_s(y)\}\to M$ 
for $(M,\omega_s,\theta_s)$ (compare \eqref{Equation Intro j}).
%
%
Furthermore, one can construct an isotopy $F_s:M\to M$ with $F_0=\mathrm{id}$ such that $F_s\circ j_s|_{\Sigma\times[0,\epsilon]}=j_0|_{\Sigma\times[0,\epsilon]}$ for some $\epsilon>0$. 
As $F_s$ can be chosen to be compactly supported near $\Sigma$, we can ensure that $F_s=\mathrm{id}$ on the original $M^{\inn}$ before enlarging and $F_s=\mathrm{id}$ at infinity.
Observe that
if we can prove the proposition for $(M,F_s^*\omega_s,F_s^*\theta_s)$ then it will also follow for $(M,\omega_s,\theta_s)$ by conjugating the isomorphism by $F_s$.
Thus we may now assume that
\begin{equation}\label{e:jsj0}
j_s|_{\Sigma\times[0,\epsilon]}=j_0|_{\Sigma\times[0,\epsilon]}.
\end{equation}
%
%
%
The family $j_s$ determines contact forms $\alpha_s$ on $\Sigma$, with $j_s^*(\theta_s)=e^{\rho}\alpha_s$ where $\rho\in [0,\infty)$ plays the same role as $r$ in \eqref{Equation Intro j}.
By Lemma \ref{Lemma family of alpha gives parametrization}, we obtain a new family of conical parametrisations $i_s: (\Sigma \times [c,\infty)) \to M_{\alpha_s}^{\out}\subset \hat{M}$, with $i_s^*\theta_0 = e^{\rho}\alpha_s$ where $\rho\in [c,\infty)$. By construction, $i_s^{-1}(M_{\alpha_s}^{\out})\subset \hat{M}$ does not intersect $M^{\inn}$. 
%
%
The composition
\[
\varphi_s:=j_s\circ i_s^{-1}: M_{\alpha_s}^{\out}\to \hat{M}
\]
satisfies $\varphi_s^*\theta_s = \theta_0$. We will now extend $\varphi_s$ to $\hat{M}\setminus M^{\inn}$ so that $\varphi_s$ equals the identity near $\partial M^{\inn}$ (the proposition will then follow by further extending via $\varphi_s|_{M^{\inn}}=\mathrm{id}$).

\red{By construction, $M_{\alpha_s}^{\out}\subset \Sigma \times (\epsilon,\infty)\subset \hat{M}$ (where we have identified the domain and the image of $j_0$ to simplify the notation). By the definition of $i_s$ in Lemma \ref{Lemma family of alpha gives parametrization}, $\varphi_s$ satisfies the equation 
\begin{equation}\label{Equation varphi js and j0}
\varphi_s(y,r) = j_s\Big(\psi_s^{-1}(y),r+f_s\big(\psi^{-1}_s(y)\big)\Big)\subset \hat{M}_s
\end{equation}
for $(y,r)\in M_{\alpha_s}^{\out} \subset \Sigma \times (\epsilon,\infty)$. 
By construction those values satisfy the inequalities $r+f_s\big(\psi^{-1}_s(y))\geq c>\epsilon$. Thus we can extend $\varphi_s$ to $\Sigma \times [0,\infty)$ by
\[
\varphi_s(y,r)=j_s(\psi_{a_s(r)}^{-1}(y),b_s(y,r))
\] where we now explain the interpolations $a_s:[0,\infty)\to [0,1]$, $b_s: \Sigma\times [0,\infty) \to [0,\infty)$. We want $a_s(r)=s$ if $(r,y)\in M_{\alpha_s}^{\out}$ for some $y$, $a_s(r)=0$ for $r\in [0,\epsilon/2]$. We want $b_s(y,r)$ to smoothly interpolate between the value required on $M_{\alpha_s}^{\out}$ by \eqref{Equation varphi js and j0} and the function $b(y,r)=r$ for $r\in [0,\epsilon/2]$ (e.g. smoothen a linear interpolation). This ensures by \eqref{e:jsj0} that for $r\in [0,\epsilon/2]$ we have $\varphi_s(y,r)=(y,r)$, the identity map.}
\end{proof} 
\begin{remark} If the $(M,\omega_s,\theta_s)$ are complete, then the proof simplifies as one can work on a fixed manifold $M^{\inn}\cup (\Sigma \times [0,\infty))$ throughout, instead of using $\hat{M}_s$.
\end{remark}
\subsection{Convex domains}

Convex domains $(D,\omega,\alpha)$ are defined in Rmk.\ref{Remark Intro Convex domain}. Observe $\omega$ is exact in a collar neighbourhood $C$ of $\partial D$. A choice of primitive $\theta$ on $C$ with $\theta|_{\Sigma}=\alpha$ yields a Liouville vector field $Z$ by $\iota_Z\omega=\theta$. Integrating $Z$ backwards in time from $\Sigma$, we may assume $C$ is identified with $\Sigma\times [-\epsilon,0]$. The completion $\hat D$ extends this via \eqref{Equation Intro j} to a conical end $\Sigma \times (-\epsilon,\infty)$, yielding a complete convex manifold.

\begin{remark}\label{Remark choice of primitive for convex domain}
Note that $M=D\setminus \partial D$ is a convex manifold $(M,\omega,\theta)$, as we obtained a (non-complete) conical parametrisation by $\Sigma \times [-\epsilon,0)$ above. Suppose $\theta_0,\theta_1$ are two choices of primitive as above. As $C,\Sigma$ are homotopy equivalent, and the closed form $\theta_1-\theta_0$ pulls back to zero on $\Sigma$, we have $\theta_1-\theta_0=df$ for some $f:C\to \R$. 
Thus $[\omega,\theta_0 + d(sf)]_{0\leq s \leq 1}$ is a constant family in $H^2(M,M^{\out})$. So Theorem \ref{t:invariance1} implies that the isomorphism class of $(D,\omega,\alpha)$ does not depend on the chosen primitive on the collar, as there is an isomorphism of the completions $(M,\omega,\theta_0)^{\wedge}\cong (M,\omega,\theta_1)^{\wedge}$.
\end{remark}

\begin{lemma}\label{Lemma Trick to get convexity}
Let $(D,\omega)$ be a closed symplectic manifold with boundary $\Sigma:=\partial D$.
Suppose that $h: D\to \R$ is a smooth function such that $\Sigma$ is a regular level set, $h$ strictly increases in the outward normal direction, and $\omega|_{T\Sigma}=d\alpha$ for some $\alpha\in \Omega^1(\Sigma)$. Then
$(D,\omega,\alpha)$ is a convex domain if and only if $\alpha (X_h)>0$.
\end{lemma}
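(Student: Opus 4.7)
The plan is to compare the sign of the top form $\alpha\wedge(d\alpha)^{n-1}$ on $\Sigma$ (where $n=\dim_{\C}D$) with the sign of $\omega^n$ on $D$ using an adapted symplectic splitting along the Hamiltonian vector field $X_h$. First I observe that $X_h$, defined by $\omega(\cdot,X_h)=dh$, is tangent to $\Sigma$ (since $dh(X_h)=\omega(X_h,X_h)=0$) and nowhere vanishing on $\Sigma$ (since $dh|_\Sigma$ is). Moreover, for $v\in T\Sigma$, $\omega(X_h,v)=-dh(v)=0$, so $X_h$ lies in the $1$-dimensional kernel of the restriction $\omega|_{T\Sigma}=d\alpha$. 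Hence $X_h$ spans $\ker d\alpha$ pointwise on $\Sigma$.

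Next I reduce the contact condition to a condition on $\alpha(X_h)$. Because $\iota_{X_h}(d\alpha)^{n-1}=(n-1)(\iota_{X_h}d\alpha)\wedge(d\alpha)^{n-2}=0$, evaluating on any basis of the form $(X_h,u_1,\ldots,u_{2n-2})$ of $T_p\Sigma$ yields
\[
\alpha\wedge(d\alpha)^{n-1}(X_h,u_1,\ldots,u_{2n-2})=\alpha(X_h)\cdot(d\alpha)^{n-1}(u_1,\ldots,u_{2n-2}).
\]
In particular $\alpha$ is a contact form iff $\alpha(X_h)\neq 0$, and the sign of the volume form $\alpha\wedge(d\alpha)^{n-1}$ on such a basis agrees with the product of the signs of $\alpha(X_h)$ and $(d\alpha)^{n-1}(u_1,\ldots,u_{2n-2})$.

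To determine the correct sign I pick an outward-pointing $Y\in T_pD$, so $dh(Y)>0$ by hypothesis. Then $W:=\mathrm{span}(Y,X_h)$ is a symplectic $2$-plane, since $\omega(Y,X_h)=dh(Y)>0$; its symplectic complement $U:=W^{\perp_\omega}$ satisfies $T_pD=W\oplus U$ symplectically. Any $v\in U$ satisfies $\omega(v,X_h)=dh(v)=0$, so $U\subset T_p\Sigma$, and a basis $(u_1,\ldots,u_{2n-2})$ of $U$ completes $X_h$ to a basis of $T_p\Sigma$. Writing $\omega=\omega|_W+\omega|_U$ and using that $(\omega|_W)^2=0$, one gets $\omega^n=n\,\omega|_W\wedge(\omega|_U)^{n-1}$, and therefore
\[
\omega^n(Y,X_h,u_1,\ldots,u_{2n-2})=n\cdot dh(Y)\cdot(d\alpha)^{n-1}(u_1,\ldots,u_{2n-2}).
\]

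Finally I conclude by bookkeeping orientations. By definition of the boundary orientation of $\Sigma=\partial D$, the basis $(X_h,u_1,\ldots,u_{2n-2})$ is positively oriented iff $(Y,X_h,u_1,\ldots,u_{2n-2})$ is symplectically oriented, which by the last display (and $dh(Y)>0$) is equivalent to $(d\alpha)^{n-1}(u_1,\ldots,u_{2n-2})>0$. Combined with the reduction formula from the second paragraph, this shows that $\alpha\wedge(d\alpha)^{n-1}$ is positive with respect to the boundary orientation of $\Sigma$ if and only if $\alpha(X_h)>0$, which is the desired equivalence. The only step requiring genuine care is the sign matching between the symplectic and boundary orientations in this last paragraph; the rest of the argument is linear algebra at a point.
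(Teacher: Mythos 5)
Your proof is correct and follows essentially the same approach as the paper: identify $\R X_h=\ker(\omega|_{T\Sigma})=\ker d\alpha$, then compare signs of $\alpha\wedge(d\alpha)^{n-1}$ and $\alpha(X_h)$. The paper dispatches the orientation step with a single "this readily implies"; your second, third, and fourth paragraphs simply supply the sign bookkeeping (via the symplectic splitting $T_pD=\mathrm{span}(Y,X_h)\oplus U$ and the boundary-orientation convention) that the paper leaves to the reader.
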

\begin{proof}
By definition, $X_h\in T\Sigma$ so that $\omega|_{T\Sigma}(\cdot,X_h)=dh|_{\Sigma}=0$. As $\omega$ is symplectic, $\dim \ker \omega|_{T\Sigma}=1$ and $\ker \omega|_{T\Sigma}=\R\, X_h$. This readily implies that the condition $\alpha\wedge (d\alpha)^{\dim_{\C} D-1}>0$ is equivalent to $\alpha(X_h)>0$.
\end{proof}
Arguing as in Proposition \ref{Lemma varphis isotopy on M}, we obtain the following.

\begin{proposition}\label{l:deformationcompact}
Let $(D,\omega_s,\alpha_s)$ be a deformation of convex domains. After choosing a family of primitives $\theta_s$ for $\omega_s$ near $\partial D$ with $\theta_s|_{\Sigma}=\alpha_s$, by completion we obtain a family of complete convex manifolds $(M_s,\omega_s,\theta_s)$. Then there is a family of convex manifolds $(M_0,\omega_0+\beta_s,\theta_0)$ where the forms $\beta_s$ are compactly supported, $\beta_0=0$, together with a family of isomorphisms 
\begin{equation*}
\varphi_s:(M_0,\omega_0+\beta_s,\theta_0)\to (M_s,\omega_s,\theta_s),\qquad \varphi_s|_D = \mathrm{id},\qquad \varphi_0=\mathrm{id}.\tag*{$\qed$}
\end{equation*}
\end{proposition}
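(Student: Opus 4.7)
The plan is to follow the strategy of Proposition \ref{Lemma varphis isotopy on M}, streamlined by the fact that the compact piece $D$ is fixed throughout the deformation and that all completions $M_s$ are diffeomorphic to the same smooth manifold $D \cup_\Sigma (\Sigma \times [0, \infty))$. The primitives $\theta_s$ near $\partial D$ yield Liouville vector fields $Z_s$ transverse to $\Sigma = \partial D$, giving collar identifications and a uniform model for the conical end: on the attached $\Sigma \times [0, \infty)$ one has $(\omega_s, \theta_s) = (d(e^r \alpha_s), e^r \alpha_s)$, where $\alpha_s = \theta_s|_\Sigma$.

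Apply Gray's stability theorem to the smooth family $(\alpha_s)_{s \in [0,1]}$ of positive contact forms on the compact manifold $\Sigma$: this produces a smooth isotopy $\psi_s: \Sigma \to \Sigma$ with $\psi_0 = \mathrm{id}$ and smooth functions $f_s: \Sigma \to \R$ with $f_0 = 0$, satisfying $\psi_s^* \alpha_s = e^{f_s} \alpha_0$. As in Lemma \ref{Lemma family of alpha gives parametrization}, the map $(y, r) \mapsto (\psi_s(y), r - f_s(y))$, defined for $r \geq c$ where $c > \max_{s, y} f_s(y)$, pulls $e^r \alpha_s$ back to $e^r \alpha_0$.

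Now construct $\varphi_s: M_0 \to M_s$ as follows: set $\varphi_s|_D = \mathrm{id}$, and on the conical end interpolate smoothly in $r$ between the identity near $\{r = 0\}$ and the Gray map on $\{r \geq c\}$. Concretely, pick a cutoff $\chi: [0, \infty) \to [0, 1]$ with $\chi \equiv 0$ near $0$ and $\chi \equiv 1$ on $[c, \infty)$, and put
\begin{equation*}
\varphi_s(y, r) = \bigl(\psi_{\chi(r) s}(y),\; r - \chi(r) f_s(y)\bigr).
\end{equation*}
Because $\psi_0 = \mathrm{id}$ and $f_0 = 0$, the map matches $\mathrm{id}_D$ smoothly across $\Sigma$, and $\varphi_0 = \mathrm{id}$; for $r \geq c$ it agrees with the Gray map, so $\varphi_s^* \theta_s = \theta_0$ at infinity. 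Defining $\beta_s := \varphi_s^* \omega_s - \omega_0$ yields a closed $2$-form, supported in the compact set $D \cup (\Sigma \times [0, c])$ because $\varphi_s$ is a symplectomorphism for $r \geq c$, with $\beta_0 = 0$; convexity of $(M_0, \omega_0 + \beta_s, \theta_0)$ follows automatically because $\varphi_s$ is an isomorphism of convex manifolds onto the convex $(M_s, \omega_s, \theta_s)$.

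The main technical obstacle is making the interpolation a genuine diffeomorphism — in particular that $r \mapsto r - \chi(r) f_s(y)$ is strictly increasing, which fails if $\chi'$ is too large compared to $\max|f_s|$. As in Proposition \ref{Lemma varphis isotopy on M}, one handles this either by widening the transition region for $\chi$, or by the standard adiabatic-step trick: compactness of $[0, 1]$ and smoothness in $s$ allow a finite subdivision into subintervals on which the deformation is $C^0$-small, and composing the resulting isomorphisms produces the claimed family on the full interval $[0, 1]$.
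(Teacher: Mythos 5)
Your proof is correct and follows essentially the same route as the paper, which simply cites ``arguing as in Proposition~\ref{Lemma varphis isotopy on M}'': Gray stability via Lemma~\ref{Lemma family of alpha gives parametrization} to produce the map at the conical end, followed by a cutoff interpolation to the identity near $\Sigma$ and (if needed) the adiabatic-step subdivision of $[0,1]$. You correctly identify the simplification available here — all $Z_s$ are transverse to the fixed $\Sigma=\partial D$ and the completions share a standard cone — as well as the diffeomorphism subtlety in the transition region, which the paper also resolves by the same adiabatic device.
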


For small perturbations of a convex manifold which is the interior of some convex domain, Proposition \ref{Lemma varphis isotopy on M} can be adapted to the following quantitative statement.
\begin{proposition}\label{p:quantitative}
Let $(D,\omega,\alpha)$ be a convex domain. Fix a primitive $\theta$ on a collar $C$ of $\Sigma=\partial D$, $\theta|_{\Sigma}=\alpha$, making $(M,\omega,\theta)$ convex as in Remark \ref{Remark choice of primitive for convex domain} where $M=D\setminus \partial D$.  
Then there is an $\epsilon>0$ and a $K>0$ such that for all closed $(\mu,\lambda)\in \Omega^2(D,C)$ with $\Vert(\mu,\lambda) \Vert_{C^1(D,C)}<\epsilon$, the triple $(M,\omega+\mu,\theta+\lambda)$ is a convex manifold. Moreover, there is a compactly supported closed $2$-form $\beta$ on $M$ satisfying the bound $\Vert \beta\Vert_{C^0(M)}\leq K\Vert (\mu,\lambda)\Vert_{C^1(D,C)}$ and admitting an isomorphism
\[
\varphi:(M,\omega+\beta,\theta)^{\wedge}\to (M,\omega+\mu,\theta+\lambda)^{\wedge}\quad \textrm{with}\quad \varphi|_{M^\inn}=\mathrm{id}.
\]
In particular, $[\beta]\mapsto [\mu,\lambda]$ via $H^2_c(M)\cong H^2(D,C)$.\hfill\qed
\end{proposition}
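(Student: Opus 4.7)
The plan is to reduce the statement to a quantitative refinement of Proposition~\ref{Lemma varphis isotopy on M} applied to the linear deformation $\omega_s:=\omega+s\mu$, $\theta_s:=\theta+s\lambda$ for $s\in[0,1]$. First I would verify that $(M,\omega+\mu,\theta+\lambda)$ is convex. Non-degeneracy of $\omega+\mu$ and positivity of $(\theta+\lambda)(X_h)$, where $h$ is the Liouville function for $(M,\omega,\theta)$ on the collar, are both $C^1$-open conditions on the compact set $\overline{C}$. They therefore persist provided $\|(\mu,\lambda)\|_{C^1(D,C)}$ is below an $\epsilon$ depending on the uniform lower bounds of $|\omega^n|$ and $\theta(X_h)$ on $\overline{C}$.

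Second, I would construct $\varphi$ by running the machinery of Section~\ref{Subsection varying alpha in a family}. For $\epsilon$ small, $\alpha_s:=\alpha+s\lambda|_\Sigma$ is a smooth path of contact forms on $\Sigma$. Gray stability produces a smooth family of contactomorphisms $\psi_s\co\Sigma\to\Sigma$ with $\psi_0=\mathrm{id}$ and smooth functions $f_s\co\Sigma\to\R$ with $f_0=0$ satisfying $\psi_s^*\alpha_s=e^{f_s}\alpha$. The generating vector field of $\psi_s$ solves a pointwise linear equation whose right-hand side is $\tfrac{d}{ds}\alpha_s=\lambda|_\Sigma$, so a quantitative form of Gray stability yields
\[
\|\psi_s-\mathrm{id}\|_{C^1(\Sigma)} + \|f_s\|_{C^1(\Sigma)} \le K_1 \|\lambda\|_{C^1(C)}.
\]
Lemma~\ref{Lemma family of alpha gives parametrization} then produces new conical parametrisations $i_s$ of $M^{\out}_{\alpha_s}\subset \hat M$ with $i_s^*(\theta+s\lambda)=e^{r}\alpha_s$. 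Gluing $i_s\circ i_0^{-1}$ to the identity near $\Sigma$ through a smooth cutoff in the radial coordinate, exactly as in the proof of Proposition~\ref{Lemma varphis isotopy on M}, yields an isotopy $\varphi_s\co\hat M\to\hat M$ with $\varphi_s|_{M^{\inn}}=\mathrm{id}$, $\varphi_0=\mathrm{id}$, and $\varphi_s^*(\theta+s\lambda)=\theta$ on the end. Set $\varphi:=\varphi_1$.

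Third, define $\beta:=\varphi^*(\omega+\mu)-\omega$. Then $\beta$ is closed and vanishes on the conical end, since $\varphi^*(\theta+\lambda)=\theta$ there forces $\varphi^*(\omega+\mu)=\omega$; hence $\beta$ extends by zero to a compactly supported closed $2$-form on $M$. The bound $\|\beta\|_{C^0(M)}\le K\|(\mu,\lambda)\|_{C^1(D,C)}$ follows from the chain rule combined with the quantitative Gray stability estimate, which controls $\varphi-\mathrm{id}$ in $C^1$ on the fixed compact annular region where the cutoff is supported (whose geometry is independent of $\epsilon$), while $\mu$ is already small in $C^0$. For the cohomological identification, I would mimic Lemma~\ref{Lemma from iso get betas are exact}: the isotopy $\varphi_s$ restricts to the identity on $M^{\inn}$ and gives an identification of the constant class $[\omega+s\mu,\theta+s\lambda]\in H^2(D,C)$ with $[\omega+\beta_s,\theta]\in H^2(M,M^{\out})$, where $\beta_s:=\varphi_s^*(\omega+s\mu)-\omega$. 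Passing through the canonical isomorphism $H^2(D,C)\cong H^2_c(M)$ of \eqref{e:isocoh}, this sends $[\mu,\lambda]$ to $[\beta]$.

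The main obstacle is carrying out quantitative Gray stability and the radial-cutoff interpolation in a controlled way at the same time: the cutoff profile must be chosen a priori on a fixed compact annulus independently of $(\mu,\lambda)$, so that the $C^1$-bound on $\varphi_s-\mathrm{id}$ is genuinely linear in $\|(\mu,\lambda)\|_{C^1}$ and does not blow up as the perturbation shrinks. Once this is arranged, the linear bound on $\|\beta\|_{C^0}$ is a direct chain-rule computation, and the rest of the conclusions follow from the existing arguments in Section~\ref{Subsection Comparing the two types of variations}.
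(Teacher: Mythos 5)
Your proposal is correct and follows the route the paper intends: the paper gives no separate proof of Proposition~\ref{p:quantitative}, asserting only that Proposition~\ref{Lemma varphis isotopy on M} "can be adapted to the following quantitative statement," and your argument is a reasonable unrolling of exactly that adaptation, with the quantitative Gray stability estimate and the fixed cutoff annulus filling in what the paper leaves implicit. One small slip: the isomorphism should be built from $j_s\circ i_s^{-1}$ (comparing the deformed conical parametrisation $j_s$ with the undeformed one $i_s$) rather than $i_s\circ i_0^{-1}$, but since you defer to "exactly as in the proof of Proposition~\ref{Lemma varphis isotopy on M}" the intended construction is clear.
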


\begin{remark} We can ensure $\beta$ is compactly supported in $M$, not just $\hat{M}$, since for small deformations we can ensure $\Sigma_s$ is close to $\partial M^{\inn}=\Sigma\times \{-\epsilon\}$ 
$($see Lemma \ref{Lemma family of alpha gives parametrization}$)$.
\end{remark}


\subsection{Proof of Theorem \ref{t:invariance1}}
\label{Subsection Constant relative class implies compactly supported deformation}
By Proposition \ref{Lemma varphis isotopy on M}, we have a family of isomorphisms
\[
\varphi_s:(\hat M,\omega_0+\beta_s,\theta_0)\to \hat{M}_s:=(M,\omega_s,\theta_s)^{\wedge}.
\]
By Lemma \ref{Lemma from iso get betas are exact},
there is a family of compactly supported $1$-forms $\lambda_s$, $\lambda_0=0$, with
\begin{equation}\label{Equation betas is exact}
\beta_s=d\lambda_s.
\end{equation}
We now run Moser's argument. Let $V_s$ be the compactly supported vector field on $\hat{M}$ determined by $(\omega_0+\beta_s)(V_s,\,\cdot\,)=-\partial_s\lambda_s$. Let $F_s:\hat M\to\hat M$ be the isotopy generated by $V_s$, so $\partial_s F_s = V_s \circ F_s$, $F_0=\mathrm{id}$. Then $F_s^*(\omega_0+\beta_s) = \omega_0$ on $\hat{M}$, and $F_s^*\theta_0 = \theta_0$ at infinity since $F_s$ is compactly supported. So we obtain the isomorphism
\[\varphi:=\varphi_1\circ F_1:(M,\omega_0,\theta_0)^{\wedge}\to (M,\omega_1,\theta_1)^{\wedge}
\]
claimed in Theorem \ref{t:invariance1}. \hfill\qed
%

\begin{remark}[Liouville isomorphisms]\label{Remark invariance in Liouville case}
	For Liouville manifolds (so $\theta_s$ extends to $M^{\inn}$ with $\omega_s=d\theta_s$ on all of $M$), the $(\omega_s,\theta_s)=D(\theta_s,0)$ are automatically a constant class. Let $\bar{\theta}_s:=\varphi_s^*\theta_s$. Then $\bar{\theta}_0=\theta_0$, $d\bar{\theta}_s=\omega_0+\beta_s$, and $\bar{\theta}_s=\theta_0$ at infinity. So $\lambda_s:=\bar{\theta}_s-\theta_0$ satisfies \eqref{Equation betas is exact}. Then, for $V_s,F_s$ as above, apply Cartan's formula:
	\[
\partial_s (F_s^*\bar\theta_s) = F_s^*[\partial_s \bar\theta_s + d\iota_{V_s}(\bar\theta_s) + \iota_{V_s}d\bar\theta_s] =
	d(F_s^*\iota_{V_s}(\bar\theta_s)).
	\]
	Now define the compactly supported function $g:=\int_0^1 (F_s^*i_{V_s}(\bar\theta_s))\, ds$. Then
	\begin{equation}\label{Equation Symplectomorphism of contact type at infinity}
	\varphi^*\theta_1 - \theta_0 = dg.
	\end{equation}
	Such maps $\varphi$ are called \textit{Liouville isomorphisms}. This is a proof of \cite[Lemma 2.2]{SeidelBiased}.
\end{remark}

\begin{example}\label{Example Harris}
Let $(\omega_s)_{0\leq s\leq 1}$ be cohomologous symplectic forms making $M^{\inn}$ a convex domain (Remark \ref{Remark Intro Convex domain}). Extending to a completion $W=M^{\inn}\cup (\Sigma\times [0,\infty))$ we may assume $\omega_s=d\theta_s$ on the conical end where $\theta_s = e^r \alpha_s$ on $\Sigma\times [0,\infty)$, and $(\Sigma,\alpha_s)$ is of positive contact type. By assumption, $\frac{d}{ds}\omega_s=d\sigma_s$ for some $1$-forms $\sigma_s$ on $W$.

If $H^1(\Sigma)=0$ (which implies $[\omega_s,\theta_s]$ is constant), one can pick $\sigma_s$ with $\sigma_s=e^r\alpha_s$ on the end. One can construct $\varphi_s$ as a (typically non-compactly supported) flow of a vector field $V_s$ by Moser's argument, so $\sigma_s=-\omega_s(V_s,\cdot)$. Thus, on the end, 
\[
\frac{d\alpha_s}{ds}=-[d\alpha_s + dr\wedge \alpha_s](V_s,\cdot)
\]
after canceling out $e^r$ factors. So $V_s$ is integrable because the radial component $-\frac{d\alpha_s}{ds}(Y_s)\, Z$ (where $Y_s$ is the Reeb vector field for $\alpha_s$) is harmless. This is the argument in Harris \cite[Lemma 6.1]{Harris}.\footnote{The statement \cite[Lemma 6.1]{Harris} is missing the assumption $H^1(\Sigma)=0$, otherwise there may be an obstruction to extending the closed form $\sigma_s - e^r\alpha_s\in H^1(W^{\out})$ to $W$. In Harris' applications, $\Sigma \cong ST^*S^3\cong S^3\times S^2$ has $H^1(\Sigma)=0$ as required.}
In Section \ref{Subsection An example of a Quasi-Liouville magnetic TT2} we consider $(T^*T^2,d\theta)$ but we use a different primitive $\alpha$ at infinity making the relative class $[d\theta,\alpha]$ non-trivial. 
In this case, the Moser argument cannot yield an isomorphism, due to the obstructed relative class. Nevertheless we prove that symplectic cohomology is invariant.
\end{example}

\section{Symplectic cohomology for convex manifolds}
\label{Section SH}
\subsection{Symplectic cohomology}
\label{Subsection Symplectic cohomology}

Symplectic cohomology for Liouville manifolds was constructed by Viterbo \cite{Viterbo}, see also the surveys \cite{OanceaEnsaios,Ritter3,SeidelBiased}.  Symplectic cohomology for complete convex manifolds was constructed by the second author \cite{Ritter2}, so we will only make some remarks here. We mention some finer points in Section \ref{Subsection Novikov field, Action $1$-form, Energy}.

Let $(M,\omega,\theta)$ be convex. From now on, we use the \textit{radial coordinate} $R=e^r\in [1,\infty)$, so $j^*\theta=R\alpha$. Recall the \textit{Reeb vector field} $Y$ on $\Sigma$ is determined by $\alpha(Y)=1$, $d\alpha(Y,\cdot)=0$. By \textit{Reeb periods} we mean the periods of closed orbits of $Y$. The contact form $\alpha$ is always assumed to have been perturbed generically (using Remark \ref{Remark can vary alpha arbitrarily}), so that the Reeb orbits are transversally non-degenerate and the Reeb periods form a discrete subset of $\R^+$. The choice of perturbation does not affect $SH^*(M,\omega,\theta)$ up to isomorphism, by Theorems \ref{Theorem invariance under iso of SH} and \ref{t:invariance1}.

Recall $SH^*(M,\omega,\theta)$ is the direct limit in \eqref{Equation Intro SH is direct lim}, and we now describe the class of Hamiltonians $H:M \to \R$ more precisely. Recall we identify $M^{\out}$ with the image of $j$ in \eqref{Equation Intro j}. We always assume that $H$ is \textit{radial} at infinity, meaning $H=h(R)$ only depends on the radial coordinate $R$, thus $X_H=h'(R)Y$. This yields a one-to-one correspondence between $1$-periodic Hamiltonian orbits $x: S^1 \to M$ lying in a slice $\Sigma\times \{R\}$ with $h'(R)=T\neq 0$ and Reeb orbits $y:[0,T]\to M$ of period $T$, via $y(t) = x(t/T)$. The $1$-orbits of $X_H$ will be transversally non-degenerate, so non-degeneracy will be ensured by a generic one-periodic time-dependent perturbation of $H$ (which we suppress from the notation -- the choice of perturbations will not affect the Floer cohomology groups up to isomorphism). 

Pick some $R_{\infty}$ such that $j(\Sigma \times \{R_{\infty}\})\subset M^{\out}$ (for example $R_{\infty}$ close to $1$). Call $M_{\infty}\subset M$ the region $R\geq R_{\infty}$. Then assume that the Hamiltonian $H=h(R)$ is linear in $R$ on $M_{\infty}$, with slope $m=h'(R)$ different from the Reeb periods. By the previous two paragraphs, this implies that there are no $1$-orbits of $X_H$ in $M_{\infty}$.

The Floer complex is generated by the $1$-orbits of $X_H$, but the differential depends on a choice of almost complex structure $J$ on $M$ \textit{compatible} with $\omega$. This means:
\[
\omega(Ju,Jv)=\omega(u,v),\qquad \omega(v,Jv)>0,\qquad \forall\,u,v\in TM,\ \ u,v\neq 0.
\]
This yields a Riemannian metric $g=\omega(\cdot,J\cdot)$ on $M$. The data $(H,J)$ yields the differential, which counts \textit{Floer trajectories}, i.e.~solutions $u=u(s,t): \R \times S^1 \to M$ of the equation $\partial_s u + J(\partial_t u - X_H)=0$ that are isolated up to $\R$-translation.

We must ensure that these trajectories do not escape to infinity so that moduli spaces of Floer trajectories have well-behaved compactifications by broken trajectories. A maximum principle will hold, i.e. $R\circ u$ cannot attain a local maximum in $M_{\infty}$, if we choose $J$ to be of \textit{contact type} on $M_{\infty}$, meaning
\begin{equation}\label{Eqn J is of contact type}
JZ=Y \qquad \qquad (\textrm{equivalently } J^*\theta=dR).
\end{equation}
The possible structures $J$ as above form a non-empty contractible space, which is used to show that the Floer cohomology groups $HF^*(H)$ in \eqref{Equation Intro SH is direct lim} are independent up to isomorphism on the choice of $J$.
We recall that a generic time-dependent perturbation of $J$ is needed on $M\setminus M_{\infty}$ to ensure that moduli spaces of Floer trajectories are smooth manifolds (we suppress the perturbation from the notation -- the choice of perturbation will not affect the Floer cohomology groups up to isomorphism). 

The Floer cohomology group $HF^*(H)=HF^*(M;H,J)$ will be independent of the choices of $H,J,R_{\infty}$, in fact it is isomorphic to $HF^*(\hat{M};H,J)$ computed for the completion $\hat{M}$ for any generic time-dependent $(H,J)$ subject to $H$ having eventually slope $m$ at infinity and $J$ being of contact type at infinity. This is because continuation isomorphisms can be constructed provided the slope $m$ at infinity is constant. 

\textit{Continuation homomorphisms} $HF^*(H_+,J_+)\to HF^*(H_-,J_-)$ for different choices of the data $(H,J)$ can only be constructed if the slopes satisfy $m_+\leq m_-$ (only then a maximum principle holds). These maps count isolated solutions $u:\R \times S^1 \to M$ of $\partial_s u + J_s(\partial_t u - X_{H_s})=0$ where $(H_s,J_s)=(H_{\pm},J_{\pm})$ for $s$ close to $\pm \infty$.

As $HF^*(H)$ only depends on $m$ up to isomorphism, the direct limit in \eqref{Equation Intro SH is direct lim} can therefore be taken for any sequence $H_k$ with increasing slopes $m_k\to \infty$, using the continuation homomorphisms. The direct limit will be independent up to isomorphism on the choices. 
The above discussion implies that
\[
SH^*(M,\omega,\theta)\cong SH^*\big((M,\omega,\theta)^{\wedge}\big),
\]
in particular Theorem \ref{Theorem invariance under iso of SH} implies that this group up to isomorphism only depends on the isomorphism class of $(M,\omega,\theta)$.
By the same arguments as in \cite{Ritter3}, $SH^*(M,\omega,\theta)$ admits a pair-of-pants product and a unit, and the unital algebra $SH^*(M,\omega,\theta)$ only depends on the isomorphism class of $(M,\omega,\theta)$.

One typically chooses $H$ to be Morse and $C^2$-small in the compact region where $H$ is not radial (in the above sense), and one perturbs $J$ time-independently on this region so that $(H,g)$ is a Morse-Smale pair. This ensures that the Floer complex on this region reduces to the Morse complex for $H,g$ (the $1$-orbits of $X_H$ become non-degenerate constant orbits, and the Floer trajectories become time-independent $-\nabla H$ flow lines). If $m>0$ is smaller than all Reeb periods, one can ensure that one globally obtains a Morse complex for $M$. This implies that there is a canonical map
\[
c^*: QH^*(M,\omega)\to SH^*(M,\omega,\theta),
\]
where $QH^*(M,\omega)$ is the quantum cohomology (the quantum product is constructed using $J$ as above, but the unital algebra $QH^*(M,\omega)$ only depends on $\omega$ up to isomorphism). In particular, in the quantum product, holomorphic spheres $u:\C P^1 \to M$ are counted with weight $t^{\int u^*\omega}$.
 The same argument as in \cite{Ritter3} shows that $c^*$ is a unital algebra homomorphism. 

\begin{remark}[Viterbo's trick]\label{Remark Viterbo trick}
We remark that a generalisation of a key idea due to Viterbo \cite{Viterbo} still applies here:
if $c^*$ is not a unital algebra isomorphism, then there must exist a closed Reeb orbit in $\Sigma$. Indeed, if for all choices of $H$ there never existed a non-constant $1$-orbit of $X_H$ in the region where $H$ is radial, then one could easily construct a family $H_k$ that forces $c^*$ to be an isomorphism. 
\end{remark}

\subsection{Novikov field, Action $1$-form, Energy}
\label{Subsection Novikov field, Action $1$-form, Energy}
The groups $HF^*,SH^*,QH^*$ above are all defined over the \textit{Novikov field} $\Lambda$ in a formal variable $t$ over a base field $\K$,
\begin{equation}\label{Equation Novikov ring}
\Lambda = \Big\{ \sum_{i=1}^{\infty} a_i t^{n_i}\ : \ a_i\in \K,\ n_i\in \R,\ n_i \to \infty \Big\}.
\end{equation}

For any $H$ as above, there is a (typically non-exact) action 1-form $d\mathcal A_H$ on the space of free loops $\mathcal L M= C^{\infty}(S^1,M)$,
\[
d\mathcal A_H:=d\mathcal H -\tau_\omega
\]
where we define the function $\mathcal{H}:\mathcal{L}M\to \R$ by
\[\mathcal H(x):=\int_0^1H(x(t))\ dt,\]
and $\tau_\omega$ is the transgression $1$-form on $\mathcal{L}M$ defined by
\[
\tau_\omega (\xi) := \int_0^1 \omega(\xi(t),\partial_t x)\, dt
\]
for $\xi\in T_x \mathcal{L}M = C^{\infty}(x^*TM)$. Thus
$dA_H(\xi) = -\int_0^1 \omega(\xi(t),F(x))\, dt$
\;where
\begin{equation}\label{Equation definition of F(x) = partialt x - XH}
F=F_H: \mathcal{L}M\to \bigcup_{x\in \mathcal{L}M}\, x^*TM, \qquad F(x)=\partial_t x - X_H.
\end{equation}
Thus $1$-orbits of $X_H$ are the zeros of $F$, equivalently the zeros of $dA_H$, and
Floer trajectories are maps $u:\R\to \mathcal{L}M$ satisfying \textit{Floer's equation}: $F(u)=J\partial_su$.

Let $\mathcal M(x,y;H,J)$ be the space of rigid Floer trajectories $u: \R \times S^1 \to M$ from $x$ to $y$, modulo shift in the $s$-variable.
Then the energy is
\begin{equation}\label{e:energy}
E(u): = \int_{\R \times S^1} |\partial_s u|^2\, ds\wedge dt=\int_{\R \times S^1} |F(u)|^2\, ds\wedge dt.
\end{equation}
The differential $\partial$ on the Floer complex
\[
CF^*(H)=\oplus \{\Lambda x: F(x)=0\}
\]
is explicitly
\begin{equation}\label{Equation Floer differential}
\partial y= \sum_{u \in \mathcal{M}(x,y;H,J)}\epsilon(u) t^{-d\mathcal{A}_H(u)} x
= \sum_{u \in \mathcal{M}(x,y;H,J)}\epsilon(u) t^{\tau_{\omega}(u)}\, t^{\mathcal H(x)-\mathcal{H}(y)} x
\end{equation}
where $\epsilon(u)\in\{\pm1\}$ are orientation signs (which we will not discuss), 
and $d\mathcal{A}_H(u)$ and $\tau_{\omega}(u)$ are evaluations of these $1$-forms on the $1$-chain $u$ in $\mathcal{L}M$. In particular,
\begin{equation}\label{Equation nu definition in terms of variation of action}
\tau_\omega(u)=\int_{\R\times S^1}u^*\omega.
\end{equation}
The maximum principle and Gromov compactness imply that the coefficient of $x$ in \eqref{Equation Floer differential} belongs to $\Lambda$ if for every $C>0$ there is an $E_C(x,y;H,J)>0$, such that
\begin{equation}\label{e:weights}
\forall\, u\in\mathcal M(x,y;H,J),\quad \tau_\omega(u)\leq C\quad \Longrightarrow\quad E(u)\leq E_C(x,y;H,J).
\end{equation}

From Floer's equation, we see that this condition is satisfied since 
\begin{align}\label{Equation Energy of Floer traj}
 E(u)=
\tau_\omega(u)+\mathcal H(x)-\mathcal H(y).
\end{align}
From $(CF^*(H,J),\partial)$ one obtains the $HF^*(H)$ mentioned in Section \ref{Subsection Symplectic cohomology}. The continuation maps $\phi:HF^*(H_+,J_+)\to HF^*(H_-,J_-)$ are explicitly 
\begin{equation}\label{Equation Continuation map twist}
\phi y = \sum_{u \in \mathcal{M}(x,y;H_s,J_s)}\epsilon(u) t^{\tau_\omega(u)\,}t^{\mathcal H_-(x)-\mathcal H_+(y)} \,x
\end{equation}
where $\mathcal{M}(x,y;H_s,J_s)$ is the moduli space of rigid maps $u:\R \to \mathcal L M$ satisfying $F_{H_s}(u)=J_s\partial_s u$ for $(H_s,J_s)$ as in Section \ref{Subsection Symplectic cohomology}. In this case,
\[
E(u)=\tau_\omega(u)+\mathcal H_-(x)-\mathcal H_+(y)+\int_{\R\times S^1} (\partial_s H_s)(u(s,t))\, ds \wedge dt.
\]
If we require $\partial_s H_s \leq 0$ (which forces the slopes $m_s$ at infinity to decrease), then the maximum principle holds and we have the estimate
\[
E(u)\leq \tau_\omega(u)+\mathcal H_-(x)-\mathcal H_+(y),
\]
which implies \eqref{e:weights} for the new moduli space and so $\phi$ is well-defined. More generally, $\phi$ is well-defined if we just require $m_s$ to decrease, since on the right above we get a harmless additional term $+\,c\cdot \max \,\{ |\partial_s H_s(p)|: p \in M\setminus M_{\infty}\}$, since there is an $M_{\infty}$ independent of $s$ that works for all $H_s$ in the notation of Section \ref{Subsection Symplectic cohomology}. The constant $c$ is the measure of the bounded set of $s\in \R$ for which $H_s$ is $s$-dependent.

\subsection{The BV-operator}
\label{Subsection BV}

We will apply symplectic cohomology to prove the existence of more than one closed magnetic geodesic with given energy in Section \ref{Subsection An example of a Quasi-Liouville magnetic TT2} and \ref{ss:convexorientable} (see Theorem \ref{t:nonexact} and \ref{t:cmp}). To this purpose, we need to define an additional piece of structure, the \textit{BV-operator}
\[\Delta:CF^*(H)\to CF^{*-1}(H)\]
constructed by Seidel \cite{SeidelBiased}. Following \cite{Abouzaid-survey,Bourgeois-Oancea-S1}, we can describe $\Delta$ as follows.

Let $\nu\mapsto(H^\nu_s,J_s^\nu)$ be a family of continuation pairs depending on a parameter $\nu\in S^1$ in such a way that $(H^\nu_s,J^\nu_s)\equiv (H,J)$ for large $r$, $H^\nu_{-}=H(\cdot,\cdot+\nu)$ and $H^\nu_+=H$, where, as before, we identify $H$ with a small one-periodic time-dependent perturbation of an autonomous Hamiltonian, so that $H^\nu_+$ is obtained from $H$ by shifting the time by $\nu$. The BV-operator is given by
\[
\Delta y=\sum_{u \in \mathcal{M}(x,y;H^\nu_s,J^\nu_s)}\epsilon(u)\, t^{\tau_{\omega}(u)} t^{\mathcal H(x)-\mathcal{H}(y)} x,
\]
where $\mathcal M(x,y,H^\nu_s,J^\nu_s)$ is the moduli space of rigid cylinders solving $F_{H^\nu_s}(u)=J^{\nu}_s\partial_s u$ and going from $x(\cdot+\nu)$ to $y$. The BV-operator is a chain map, namely,
\begin{equation}\label{Equation BV properties}
\partial \Delta+\Delta\partial=0.
\end{equation}
In particular, it preserves the set of cocycles and of coboundaries.
\subsection{Filtration by the radial coordinate} \label{Subsection period}

As the transgression form $\tau_{\omega}$ might not be exact, the Hamiltonian action is multivalued and can not be used to filter the symplectic cohomology. However, as first observed  by Bourgeois-Oancea in \cite[p.654]{Bourgeois-Oancea-Inventiones} and refined later by McLean-Ritter in \cite[Appendix D]{McLean-Ritter}, if the Hamiltonian $H:M\to\R$ is radial and convex on $M^{\mathrm{in}}$, we still get a geometric filtration of $CF^*(H)$ by the radial coordinate, or equivalently by the period of the associated Reeb orbit. This filtration is preserved under $\partial$ and $\Delta$, since both operators count solutions of a small perturbation of Floer's equation $F_H(u)=J\partial_s u$. More precisely, a one-periodic orbit $x$ of $H$ on $M^{\mathrm{out}}$, yields two generators $x_+$ and $x_+$ of $CF^*(H)$ after perturbation. Then,
\begin{equation}\label{e:deltaDelta}
\left\{\begin{aligned}
\partial x_-&=ax_++y_1,\\
\partial x_+&=y_2,
\end{aligned}\right.\qquad \left\{\begin{aligned}
\Delta x_-&=y_3,\\
\Delta x_+&=bx_-+y_4,
\end{aligned}\right.
\end{equation}
where $a,b\in\Z$ and $y_1,y_2,y_3,y_4$ are generated by orbits having $r$-component strictly less than that of $x$. The values of $a$ and $b$ depend on whether $x$ is a good or bad orbit. Let $z$ be the primitive Reeb orbit from which $x$ is obtained by iteration and denote by $k$ the order of iteration. Recall that an orbit is \textit{good} if $\bar{\mu}(x)\equiv\bar{\mu}(z)\ (\!\!\!\!\mod 2)$ and \textit{bad} otherwise. Here $\bar{\mu}$ is the transverse Conley-Zehnder index of an orbit. Results of Bourgeois-Oancea \cite[Proposition 3.9]{Bourgeois-Oancea-MorseBott} (see also \cite[Proposition 2.2]{CFHW})), respectively of Zhao \cite[Equation (6.1)]{Zhao}, gives us the values of $a$ and $b$:
\begin{equation}\label{e:ab}
a=\begin{cases}
0&\text{if $x$ is good},\\
\pm 2&\text{if $x$ is bad},
\end{cases}\qquad b=\begin{cases}
k&\text{if $x$ is good},\\
0&\text{if $x$ is bad}.
\end{cases}
\end{equation}
\subsection{Twisted symplectic cohomology}
\label{Subsection twisted symplectic cohomology}
Twisted symplectic cohomology was first constructed in \cite{Ritter1,Ritter2} for Liouville manifolds. We now adapt this to convex manifolds.
Let ${\zeta}\in H^1(\mathcal L(M))$ be a cohomology class represented by a closed 1-form $\eta$ on $\mathcal L(M)$. The twisted group $HF^*(H,J)_\eta$ (and respectively $SH^*(M,\omega,\theta)_{\eta}$) is defined by replacing $\tau_{\omega}$ by $\tau_{\omega}+\eta$ in \eqref{Equation Floer differential} (resp.\eqref{Equation Continuation map twist}). Notice this changes the weights in the count, but not the moduli spaces.
That the twisted differential and twisted continuation maps are well-defined requires the analogues of \eqref{e:weights} with $\tau_\omega+\eta$ in place of $\tau_\omega$. Explicitly, 
abbreviating $\mathcal{M}(x,y)=\mathcal{M}(x,y;H,J)$ (resp.\,$\mathcal{M}(x,y;H_s,J_s)$), for every $C>0$ we need a constant $E_C(x,y;H,J,\eta)>0$ such that
\begin{equation}\label{e:weights-twisted}
\forall\, u\in\mathcal M(x,y),\quad \tau_\omega(u)+\eta(u)\leq C\quad \Longrightarrow\quad E(u)\leq E_C(x,y;H,J,\eta).
\end{equation}
Giving an upper bound $C$ on $\tau_\omega(u)+\eta(u)$ is the same as assuming an upper bound on 
the total exponent $\int u^*\omega+\int u^*\eta+\int H_-(x)\, dt - \int H_+(y)\, dt$ appearing in the twisted differential (for $H_{\pm}=H$) resp.\,twisted continuation map.
By \eqref{e:weights}, the implication \eqref{e:weights-twisted} is equivalent to saying that given $x,y$, the following holds:
\begin{equation}\label{e:implication}
\forall\, C>0,\ \exists\, C'>0,\ \forall\,u\in \mathcal{M}(x,y),\quad\tau_\omega(u)+\eta(u)\leq C\ \ \Rightarrow\ \ \tau_\omega(u)\leq C'.
\end{equation}
Note \eqref{e:implication} can fail in general, e.g. if $|\mathcal{M}(x,y)|=\infty$ and $\eta=-\tau_\omega$.

\begin{definition}\label{Definition SH well defined}
Call $SH^*(M,\omega,\theta)_{\eta}$ \textit{well-defined} if \eqref{e:implication} holds.
\end{definition}

\begin{lemma}\label{Lemma have a c<1 bound}
If there is a constant $c<1$ such that
\[
-\eta(u)\leq c \,E(u) \quad \textrm{ for all}\; \,u\in \mathcal{M}(x,y),
\]
then $SH^*(M,\omega,\theta)_{\eta}$ is well-defined.
\end{lemma}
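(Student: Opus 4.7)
The plan is to verify the implication \eqref{e:implication} directly by combining the hypothesis $-\eta(u)\leq c\,E(u)$ with the energy identity \eqref{Equation Energy of Floer traj}.

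First, consider the case of the Floer differential, where $\mathcal{M}(x,y) = \mathcal{M}(x,y;H,J)$. Suppose $u\in\mathcal{M}(x,y)$ satisfies $\tau_\omega(u)+\eta(u)\leq C$. Then
\[
\tau_\omega(u) \leq C - \eta(u) \leq C + c\,E(u).
\]
Substituting $E(u) = \tau_\omega(u) + \mathcal{H}(x) - \mathcal{H}(y)$ from \eqref{Equation Energy of Floer traj} and collecting the $\tau_\omega(u)$ terms (which we may do since $c<1$ means $1-c>0$), I obtain
\[
(1-c)\,\tau_\omega(u) \leq C + c\big(\mathcal{H}(x)-\mathcal{H}(y)\big),
\]
so that $\tau_\omega(u) \leq C'$ with $C' := \tfrac{1}{1-c}\big(C + c(\mathcal{H}(x)-\mathcal{H}(y))\big)$, which depends only on $x,y,H,c$ and not on the individual $u$. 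This is exactly the required implication \eqref{e:implication}, so the twisted differential lands in $\Lambda$ by \eqref{e:weights}.

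For continuation maps, where $\mathcal{M}(x,y) = \mathcal{M}(x,y;H_s,J_s)$, the only change is that the energy identity acquires the additional term $\int_{\R\times S^1}(\partial_s H_s)(u)\,ds\wedge dt$. Under the standing assumption that the slopes $m_s$ are monotone (so that continuation is defined at all) this integral is bounded above by a constant $K_{H_s}$ depending only on the homotopy $H_s$ (as discussed after equation \eqref{Equation Continuation map twist}). Running the same algebraic rearrangement gives $(1-c)\tau_\omega(u) \leq C + c\,\mathcal{H}_-(x) - c\,\mathcal{H}_+(y) + c\,K_{H_s}$, again yielding a uniform upper bound on $\tau_\omega(u)$ in terms of $x,y$ and the fixed data, which is all that \eqref{e:implication} requires.

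There is no serious obstacle here: the proof is purely algebraic, once the hypothesis is written in the right form. The only subtlety is the appearance of $c<1$ (rather than $c\leq 1$), which is essential in order to divide by $1-c$ and absorb the $\tau_\omega(u)$ term coming through $E(u)$; if $c=1$ the inequality would become $0\leq C + \mathcal{H}(x)-\mathcal{H}(y)$, giving no control on $\tau_\omega(u)$.
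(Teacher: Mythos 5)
Your proof is correct and is essentially identical to the paper's: both arguments combine the hypothesis $-\eta(u)\leq c\,E(u)$ with the energy identity \eqref{Equation Energy of Floer traj}, rearrange to get $(1-c)\tau_\omega(u)\leq C+c(\mathcal H(x)-\mathcal H(y))$, and divide by $1-c>0$. The paper disposes of the continuation-map case with a one-line "similar argument holds" remark, whereas you spell it out (correctly) using the bounded $\int(\partial_s H_s)$ term; this is a presentational difference, not a mathematical one.
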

\begin{proof}
By \eqref{Equation Energy of Floer traj}, $-\eta(u)\leq c \tau_{\omega}(u) + p_{x,y}$ where $p_{x,y}=
c(\mathcal H(x)-\mathcal H(y))$ only depends on the asymptotics (a similar argument holds for continuation solutions). Then,
\[
\tau_{\omega}(u)+\eta(u)\leq C\quad\Rightarrow\quad\tau_{\omega}(u)\leq c\tau_{\omega}(u)+p_{x,y}+C\quad\Rightarrow\quad\tau_{\omega}(u)\leq \frac{p_{x,y}+C}{1-c}.\qedhere\]
\end{proof}

\begin{lemma}\label{Lemma SH twist if change by exact form}
$SH^*(M,\omega,\theta)_{\eta}$ is well-defined if $\eta$ is transgression-compatible (Def.\ref{Definition Intro transgression compatible twist}). In particular, if $\tau(\omega)$ is exact (e.g.~ when $\omega$ is exact), then any twist $\eta$ is allowed.
\end{lemma}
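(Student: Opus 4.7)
The strategy is to convert the algebraic condition of transgression-compatibility into a pointwise estimate along Floer trajectories that telescopes because one of the terms is an exact $1$-form.

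First, I would unpack the equivalent formulation in Definition \ref{Definition Intro transgression compatible twist}: for the chosen representatives $\tau_\omega$ and $\eta$, transgression-compatibility supplies a constant $c\geq 0$ and a smooth function $\mathcal{K}\colon\mathcal{L}M\to\R$ with
\[
\tau_\omega \;=\; c(\tau_\omega + \eta) + d\mathcal{K}.
\]
Concretely, if $[\zeta]=\lambda[\tau(\omega)]$ with $\lambda>-1$, take $c=1/(1+\lambda)\geq 0$; if $[\tau(\omega)]=0$, take $c=0$ and $\mathcal{K}$ any primitive of $\tau_\omega$.

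Next I would verify the implication \eqref{e:implication}. Fix $x,y$ and let $u\in\mathcal{M}(x,y)$; the same argument handles the continuation moduli space $\mathcal{M}(x,y;H_s,J_s)$. Viewing $u\colon\R\times S^1\to M$ as a $1$-chain in $\mathcal{L}M$ with boundary $y-x$, evaluation of the $1$-form identity against $u$, together with Stokes on the exact piece, yields
\[
\tau_\omega(u) \;=\; c\bigl(\tau_\omega(u)+\eta(u)\bigr) + \mathcal{K}(y)-\mathcal{K}(x).
\]
If $\tau_\omega(u)+\eta(u)\leq C$, then using $c\geq 0$ one reads off
\[
\tau_\omega(u) \;\leq\; cC + \mathcal{K}(y)-\mathcal{K}(x) \;=:\; C',
\]
and $C'$ depends only on $x,y,C$ (not on $u$), establishing \eqref{e:implication}. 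Definition \ref{Definition SH well defined} then gives that $SH^*(M,\omega,\theta)_\eta$ is well-defined, so in particular the differential, continuation maps, and hence the direct limit make sense.

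Finally, the ``in particular'' clause is immediate: when $\tau(\omega)$ is exact, $[\tau(\omega)]=0\in H^1(\mathcal{L}M)$, so every $\eta$ is transgression-compatible with $c=0$, and the argument above applies verbatim. Since the whole proof reduces to a one-line telescoping identity on $\mathcal{L}M$, there is no genuine obstacle; the only subtlety is to note that the identity $\tau_\omega = c(\tau_\omega+\eta)+d\mathcal{K}$ is intrinsic to the loop space and thus indifferent to the particular PDE satisfied by $u$, so the estimate applies uniformly to Floer cylinders and continuation cylinders alike (both having endpoints in $\mathcal{L}M$).
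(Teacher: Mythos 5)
Your proposal is correct and follows exactly the same approach as the paper's (very terse) proof: the paper simply asserts that $C' = cC + \mathcal{K}(y)-\mathcal{K}(x)$ works in \eqref{e:implication}, which is precisely the telescoping estimate you derive by evaluating $\tau_\omega = c(\tau_\omega+\eta)+d\mathcal{K}$ on the $1$-chain $u$ and using $c\geq 0$. Your version just spells out the extraction of $c$ from Definition \ref{Definition Intro transgression compatible twist} and the Stokes-type evaluation of $d\mathcal{K}$ on $u$; there is no substantive difference.
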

\begin{proof}
If $\tau_{\omega}$ is exact then $C'$ in \eqref{e:implication} is determined a priori by the data $x,y$. For the transgression-compatible case, \eqref{Equation Intro transgression compatible} implies $C'=cC+\mathcal K(y)-\mathcal K(x)$ works.
\end{proof}
\begin{lemma}\label{Lemma SH twisted change of basis trick}
If $SH^*(M,\omega,\theta)_{\eta}$ is well-defined then for any function $\mathcal{K}: \mathcal{L}M\to \R$, $SH^*(M,\omega,\theta)_{\eta+d\mathcal{K}}$ is well-defined and there is a natural isomorphism $SH^*(M,\omega,\theta)_{\eta} \cong SH^*(M,\omega,\theta)_{\eta+d\mathcal{K}}$ induced by the chain-level change of basis isomorphism sending a $1$-orbit $x$ to $t^{-\mathcal{K}(x)}x$. So we may write $SH^*(M,\omega,\theta)_{{\zeta}}$ for a class ${\zeta}\in H^1(\mathcal{L}M)$. $\qed$
\end{lemma}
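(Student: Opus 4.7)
The plan is to verify the lemma in three short steps: first, well-definedness is preserved; second, the formula $\Phi(x) = t^{-\mathcal{K}(x)} x$ defines a chain-level isomorphism intertwining the two twisted differentials; third, this passes to continuation maps and hence to the direct limit defining $SH^*$.

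For the first step, observe that for any Floer trajectory (or continuation solution) $u:\R\times S^1\to M$ going from $x$ to $y$, we have $d\mathcal{K}(u)=\mathcal{K}(y)-\mathcal{K}(x)$, which depends only on the asymptotic ends. Thus, given $x,y$, an upper bound $\tau_\omega(u)+\eta(u)+d\mathcal{K}(u)\leq C$ is equivalent to the upper bound $\tau_\omega(u)+\eta(u)\leq C+\mathcal{K}(x)-\mathcal{K}(y)$. Consequently, the implication \eqref{e:implication} for $\eta$ immediately yields the corresponding implication for $\eta+d\mathcal{K}$ (with a new constant $C'+\mathcal{K}(y)-\mathcal{K}(x)$). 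Hence $SH^*(M,\omega,\theta)_{\eta+d\mathcal{K}}$ is well-defined as soon as $SH^*(M,\omega,\theta)_{\eta}$ is.

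For the second step, I define $\Phi\co CF^*(H)_\eta\to CF^*(H)_{\eta+d\mathcal{K}}$ on generators by $\Phi(x):=t^{-\mathcal{K}(x)}x$, extended $\Lambda$-linearly. This is clearly a bijection on generators with inverse $x\mapsto t^{\mathcal{K}(x)}x$. To check it is a chain map, one compares
\[
\partial_{\eta+d\mathcal{K}}\Phi(y)=\sum_{u\in\mathcal M(x,y)}\epsilon(u)\,t^{\tau_\omega(u)+\eta(u)+\mathcal{K}(y)-\mathcal{K}(x)+\mathcal H(x)-\mathcal H(y)-\mathcal{K}(y)}\,x
\]
with
\[
\Phi(\partial_\eta y)=\sum_{u\in\mathcal M(x,y)}\epsilon(u)\,t^{\tau_\omega(u)+\eta(u)+\mathcal H(x)-\mathcal H(y)-\mathcal{K}(x)}\,x;
\]
both sides agree term by term since the moduli spaces $\mathcal M(x,y;H,J)$ used to define $\partial_\eta$ and $\partial_{\eta+d\mathcal K}$ are identical, only the weights differ. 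The verification for the BV-operator $\Delta$ is formally identical, since $\Delta$ is also defined by counting rigid cylinders weighted by $t^{\tau_\omega(u)+\eta(u)}$ (respectively $t^{\tau_\omega(u)+\eta(u)+d\mathcal K(u)}$) between prescribed asymptotic orbits.

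For the third step, the same computation with $H_\pm,J_\pm$ in place of $H,J$ shows that $\Phi$ intertwines the twisted continuation maps between any two Hamiltonians with increasing slopes, since continuation solutions are again weighted by $t^{\tau_\omega(u)+\eta(u)}$ (resp.\ $t^{\tau_\omega(u)+\eta(u)+d\mathcal K(u)}$) and endpoints of continuation solutions are still $x$ and $y$. Passing to the direct limit \eqref{Equation Intro SH is direct lim} over slopes $m\to\infty$ yields the desired isomorphism $SH^*(M,\omega,\theta)_{\eta}\cong SH^*(M,\omega,\theta)_{\eta+d\mathcal{K}}$. There is no real obstacle here: the only thing to notice is that $\mathcal{K}$ is an arbitrary function on $\mathcal{L}M$, not required to be bounded or smooth, yet the definition of $\Phi$ only evaluates $\mathcal{K}$ at the discrete set of $1$-orbits of the given Hamiltonians, so no analytic issues arise. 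This justifies the notation $SH^*(M,\omega,\theta)_{\zeta}$ for a class $\zeta\in H^1(\mathcal{L}M)$.
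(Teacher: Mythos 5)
Your proposal is correct, and since the paper does not spell out a proof (the lemma is stated with an immediate $\qed$), your three-step write-up is exactly the routine verification the authors leave implicit. The key observations are the ones you make: (i) $d\mathcal{K}(u)=\mathcal{K}(y)-\mathcal{K}(x)$ depends only on the endpoints of a Floer/continuation cylinder, so well-definedness (condition \eqref{e:implication}) transfers by replacing $C$ with $C+\mathcal{K}(x)-\mathcal{K}(y)$; (ii) the exponents $\tau_\omega(u)+\eta(u)+d\mathcal{K}(u)+\mathcal{H}(x)-\mathcal{H}(y)-\mathcal{K}(y)$ and $\tau_\omega(u)+\eta(u)+\mathcal{H}(x)-\mathcal{H}(y)-\mathcal{K}(x)$ coincide, so $\Phi$ is a chain isomorphism; (iii) the same cancellation works for continuation maps, so $\Phi$ descends to the direct limit.
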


\begin{remark}\label{Remark Twisting variable s}
An alternative approach, is to distinguish two formal variables $t,b$,
\[
\partial y = \sum_{u \in \mathcal{M}(x,y;H,J)}\epsilon(u) b^{\eta(u)}t^{\tau_\omega(u)}t^{\mathcal H(x)-\mathcal H(y)} x.
\]
One can work for example over $\Lambda \hat{\otimes} B$, where $B$ consists of finite sums $\sum c_j b^{m_j}$ where $c_j\in \K$, $m_j\in \R$. The tensor product is completed, meaning $\Lambda \hat{\otimes} B=\{\sum a_i  t^{n_i}:a_i\in B$, $n_i \in \R,$ with $n_i\to \infty\}$. 
Then $SH^*(M,\omega,\theta)_{{\zeta}}$ always exists. However, for the purposes of proving a deformation theorem like \eqref{Eqn nonexact is twisted}, one would need to \textit{specialise} the twisted group by evaluating $b\mapsto t$, leading again to convergence issues in the Novikov field.
\end{remark}
\subsection{Proof of Theorem \ref{t:invariance2}.(1): existence of twisted symplectic cohomology}
\label{Subsection Existence of twisted symplectic cohomology}
%
%
We now restrict ourselves to twisting by classes in $H^1(\mathcal{L}M)$ arising by transgression from classes in $H^2(M,M^{\out})\cong H^2_c(M)$ (if $H^1(\Sigma;\R)=0$, this means any $H^2(M)$ class). Explicitly, we twist by any closed two-form $\beta$ on $M$ exact at infinity. By Lemma \ref{Lemma SH twisted change of basis trick} we may assume $\beta$ is supported in a compact region $N \subset \mathrm{int}(M^{\inn})$.
We will prove that $SH^*(M,\omega,\theta)_{\tau(\beta)}$ is well-defined when $\|\beta\|$ is sufficiently small.

Consider the pairs $(H,J)$ in the construction of $SH^*(M,\omega,\theta)$ satisfying:
\begin{align*}
\mathbf{(H1)}\quad &\forall\,x\in\mathcal{L}(M),\ \ x(S^1)\cap N\neq \emptyset\ \ \Rightarrow\ \ \textstyle \|F_H(x)\| := (\int_0^1 |\partial_t x  - X_H|^2 \, dt)^{1/2}> \delta,\\
\mathbf{(H2)}\quad &\Vert H\Vert_{C^1(N)}\leq C.
\end{align*}

\begin{lemma}\label{Lemma cofinal family of hams}
There exist $\delta,C>0$ admitting a cofinal family $(H_k,J_k)$ of such pairs $(H,J)$ and monotone interpolating homotopies $H_{k,s},J_{k,s}$ belonging to such pairs. 
\end{lemma}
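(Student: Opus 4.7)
My plan is to construct a cofinal sequence of Hamiltonians that all coincide on $M^{\inn}\supset N$ with one fixed base Morse function having no critical points in $N$, and differ only at infinity by increasing radial slopes. This reduces both (H1) and (H2) to a single geometric estimate on the compact set $N$ for one fixed Hamiltonian, and automatically handles the monotone interpolating homotopies.

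For the construction, I would first choose a Morse function $h_0\colon M^{\inn}\to\R$ with all critical points in $M^{\inn}\setminus N$, which exists by transversality since $N\subset \mathrm{int}(M^{\inn})$ is compact with nonempty complement. After rescaling, take $\|h_0\|_{C^2(M^{\inn})}\le\varepsilon$ for a small $\varepsilon>0$ to be determined, and pick a neighbourhood $U\supset N$ on which $|X_{h_0}|\ge c_0>0$. Then select slopes $m_k\to\infty$ avoiding the discrete set of Reeb periods, and define $H_k$ to equal $h_0$ on $M^{\inn}$, to be radial $h_k(R)$ with $h_k$ convex on $M^{\out}$ and $h_k'(R)=m_k$ for large $R$, arranging the intermediate values of $h_k'$ so that all $1$-orbits on the collar lie in level sets of $R$ that are disjoint from $N$. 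Pair with $J_k$ that is $\omega$-compatible, of contact type at infinity, and generic on the compact region. Cofinality is immediate, and (H2) holds with $C:=\|h_0\|_{C^1(N)}$.

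The main content is (H1). For $x\in\mathcal{L}M$ with some $t_0\in S^1$ satisfying $x(t_0)\in N$, I would split into two cases. If $x(S^1)\subset U$, then $|X_{H_k}\circ x(t)|\ge c_0$ pointwise, so
\[
\|F_{H_k}(x)\|_{L^2}\ \ge\ \|X_{H_k}\circ x\|_{L^2}-\|\partial_t x\|_{L^2}\ \ge\ c_0-\|\partial_t x\|_{L^2},
\]
which yields $\|F_{H_k}(x)\|_{L^2}\ge c_0/2$ when $\|\partial_t x\|_{L^2}\le c_0/2$; in the complementary range $\|\partial_t x\|_{L^2}\ge c_0/2$, using $\|X_{H_k}\|_{C^0(U)}\le C'\varepsilon$ and choosing $\varepsilon$ with $C'\varepsilon<c_0/4$ yields $\|F_{H_k}(x)\|_{L^2}\ge c_0/4$ by the reverse triangle inequality. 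If instead $x(S^1)\not\subset U$, then Cauchy--Schwarz forces $\|\partial_t x\|_{L^2}\ge\mathrm{dist}(N,\partial U)$, and a similar comparison between $\|\partial_t x\|_{L^2}$ and $\|X_{H_k}\circ x\|_{L^2}$ on the portion of the loop in $M^{\inn}$ produces a uniform lower bound, provided one can rule out that the excursion into $M^{\out}$ is an approximate Floer trajectory. The latter uses the convexity of $h_k$ and the avoidance of Reeb periods by the intermediate slopes of $h_k'$ to show that any candidate approximate $1$-orbit leaving $U$ is forced either to have significant speed (contributing to $\|F_{H_k}\|_{L^2}$) or to concentrate near a level set where $h_k'$ equals a Reeb period, which lies outside $N$.

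For the interpolating homotopies, I would take $H_{k,s}\equiv h_0$ on $M^{\inn}$ for all $s$ and monotonically interpolate only the radial part between $h_k$ and $h_{k+1}$, preserving convexity and avoiding Reeb periods along the way; since (H1) and (H2) only involve the values on $N$, both conditions pass to the entire homotopy with the same constants $\delta,C$. Choose $J_{k,s}$ of contact type at infinity agreeing with $J_k,J_{k+1}$ at the endpoints. The main obstacle I expect is the last case of (H1): a loop may travel from $N$ into $M^{\out}$, where $|X_{H_k}|$ grows to $m_k$, so the triangle inequality alone is insufficient and one must exploit the radial geometry uniformly in $k$. The deliberate avoidance of the Reeb spectrum by the slopes $m_k$ and the intermediate values $h_k'$ is what makes the estimate uniform as $k\to\infty$, and formalising this uniformity is the technical core of the argument.
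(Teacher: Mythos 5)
Your overall structure — fix a base Hamiltonian on the compact region, keep it constant across the cofinal family and the interpolating homotopies, and reduce \textbf{(H1)}, \textbf{(H2)} to estimates on $N$ — matches the paper. The problems are in the verification of \textbf{(H1)}, where the proposal has a real gap in both cases.

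In the case $x(S^1)\subset U$, the constants cannot be made consistent. You pick $U\supset N$ with $|X_{h_0}|\ge c_0>0$ on $U$, and separately require $\|X_{H_k}\|_{C^0(U)}\le C'\varepsilon<c_0/4$. But $c_0$ is by definition the minimum of $|X_{h_0}|=|X_{H_k}|$ on $U$, so necessarily $c_0\le\|X_{H_k}\|_{C^0(U)}\le C'\varepsilon$; the requirement $C'\varepsilon<c_0/4$ is then impossible. Rescaling $h_0$ does not help, because $c_0$ and $\|X_{h_0}\|_{C^0(U)}$ scale by the same factor. More fundamentally, the split into ``$\|\partial_tx\|$ small'' and ``$\|\partial_tx\|$ large'' leaves an uncontrolled transition regime $\|\partial_tx\|_{L^2}\approx\|X_{h_0}\circ x\|_{L^2}$ where the reverse triangle inequality gives nothing. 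What handles this regime in the paper is a \emph{compactness} argument, not a pointwise estimate: by Arzel\`a--Ascoli (via $W^{1,2}\hookrightarrow C^0$), a sequence $x_n\in\mathcal L M_\epsilon$ with $\|F_{H_0}(x_n)\|\to0$ converges in $C^0$ to a genuine $1$-orbit of $X_{H_0}$ in $M_\epsilon$; these are precisely $\mathrm{Crit}(H_0)\subset M_0\setminus N$, so eventually $x_n(S^1)\cap N=\emptyset$. This yields the $\delta_0$ directly, with no numerical tuning.

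In the case where the loop exits $M^{\inn}$ (your case $x(S^1)\not\subset U$, or more precisely $x$ entering the region where $H_k$ has $k$-dependent slope), you correctly flag the danger that the excursion could approximately follow $X_{H_k}$; but the proposed fix via ``convexity and avoidance of Reeb periods'' is not what works, and is not needed. The paper's device here is different and much cheaper: restrict attention to the \emph{first} sub-arc $x|_{[a,b]}$ travelling from $N$ out to $\partial M_\epsilon$, and shrink $[a,b]$ so that $x([a,b])\subset M_\epsilon$, a fixed collar on which $H_k=H_0$ is independent of $k$. On that arc, weight the integrand of $\int_a^b|F_{H_0}(x)|$ by $|dH_0|/\|dH_0\|$ and use $dH_0(X_{H_0})=0$, so that the integrand captures $|\partial_t(H_0\circ x)|$; integrating gives the drop $h_0(e^\epsilon)-h_0(1)$ divided by $\|H_0\|_{C^1(M_\epsilon)}$. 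This single Cauchy--Schwarz step, applied on a region independent of $k$, replaces the whole discussion of ``approximate Floer trajectories at large slope'' and makes the estimate uniform in $k$ for free. I'd recommend discarding the triangle-inequality approach entirely and adopting these two ingredients: Arzel\`a--Ascoli for loops contained in a fixed collar $M_\epsilon$, and the shrink-to-first-crossing Cauchy--Schwarz (using $dH(X_H)=0$) for loops that exit it.
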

\begin{proof}
For small $r\geq 0$, abbreviate $M_r = M^{\inn} \cup (\Sigma \times [0,r])\subset M$ and $M_r^{\out}=M\setminus M_r$, so $M_0=M^{\inn}$. We will use the regions \[
N\subset M_0\subset M_{\epsilon} \subset M_{2\epsilon}\] for a small $\epsilon>0$. Fix a $C^2$-small Morse function $H_0: M \to \R$, such that the only $1$-periodic orbits of $X_{H_0}$ in $M_0$ are critical points of $H_0$, and $H_0=h_0(R)=h_0(e^r)$ is radial and strictly convex on $M_0^{\out}$ of slope $h'(R)$ less than all Reeb periods. 
By composing $H_0$ with an isotopy of $M$ supported in $M_0$, we may assume the critical points $\mathrm{Crit}(H_0)$ lie in $M_0\setminus N$. By construction, the zeros of $F_{H_0}$ on $\mathcal{L}M$ are precisely $\mathrm{Crit}(H_0)$. \red{An Arzel\`{a}-Ascoli argument}\footnote{This relies on the Sobolev embedding $W^{1,2}(S^1,\R^m)\hookrightarrow C^0(S^1,\R^m)$ \cite[Exercise 1.22]{Salamon}.} \red{implies that for $x\in \mathcal{L}M_{\epsilon}$ the value $F_{H_0}(x)$ is small only if $x$ is $C^0$-close to $\mathrm{Crit}(H_0)$.} Thus there is a $\delta_0>0$ such that if $x\in \mathcal{L}M_{\epsilon}$ and $\Vert F_{H_0}(x)\Vert\leq \delta_0$ then $x(S^1)\cap N= \emptyset$.
Define
\[
c:=h_0(e^{\epsilon})-h_0(e^0)>0,\;\quad C:=\|H_0\|_{C^1(M_{\epsilon})}, \;\quad \delta:=\min\{\delta_0,\tfrac{c}{C}\}.
\]
Let $H_k:M\to\R$ equal $H_0$ on $M_{\epsilon}$, and let $H_k$ be radial on $M^{\out}$, with fixed slope $m_k$ on $M_{2\epsilon}^{\out}$ not equal to a Reeb period, such that the slopes $m_k$ strictly increase to infinity as $k\to\infty$ and the linear interpolations $H_{k,s}$ from $H_{k+1}$ to $H_k$ are monotone: $\partial_s H_{k,s}\leq 0$. The $J_k$ can be chosen to be small generic perturbations of a fixed $J$. 

These functions satisfy \textbf{(H2)}. To establish \textbf{(H1)}, it suffices to show $\Vert F_H(x) \Vert > c/C$ for $x:S^1 \to M$ with $x(a)\in N$ and $x(b)\in M_{\epsilon}^{\out}$, for some $a,b\in S^1=\R/\Z$, where $H=H_k$ or $H=H_{k,s}$. We may assume $b>a$ in $\R$ with $|b-a|\leq 1$. By shrinking $[a,b]$ we may assume $x([a,b])\subset M_{\epsilon}$, so the path lies in the region where $H_k=H_{k,s}=H_0$. Abbreviate $H=H_0$, $F=F_{H}$, $X=X_H$ and the restriction $y=x|_{[a,b]}$. By Cauchy-Schwarz:
\[
\|F(x)\|\sqrt{b-a} 
\geq
\int_{a}^{b} |F(x)(t)|\, dt \geq
\int_{a}^b \frac{|d_{x(t)}{H}|}{\|dH\circ y\|} |\partial_t x - X|\, dt 
\geq
\frac{|\int_a^b\partial_t (H\circ x)\, dt|}{\|dH \circ y\|} \geq \frac{c}{C},
\]
where we used that $dH(X)=0$. As $|b-a|\leq 1$, we deduce $\|F_H(x)\|\geq c/C$.
\end{proof}

\begin{theorem}[Energy Estimate]\label{Theorem Energy estimate 2 for Floer traj}
For $\beta,N,\delta,C$ as above,
\[
|\tau_\beta(u)|\leq \Vert\beta\Vert_{C^0(N)}\cdot (1+\tfrac{C}{\delta})\cdot E(u),
\]
where $u$ is any Floer trajectory or continuation solution for the data from Lemma \ref{Lemma cofinal family of hams}.
\end{theorem}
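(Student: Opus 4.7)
The plan is to bound $|\tau_\beta(u)| = |\int u^*\beta|$ pointwise using $|\beta(v,w)|\leq \|\beta\|_{C^0(N)}|v||w|$ on $N$ (and zero elsewhere by the support assumption), and then to use Floer's equation together with the two conditions \textbf{(H1)} and \textbf{(H2)} to trade factors of $|\partial_t u|$ for factors of $|\partial_s u|$ so that everything is controlled by $E(u)$.

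First, since $\supp\beta\subset N$, I would write
\[
|\tau_\beta(u)|\leq \|\beta\|_{C^0(N)}\int_{u^{-1}(N)} |\partial_s u|\cdot |\partial_t u|\,ds\,dt.
\]
Next, Floer's equation $\partial_s u = -J F_H(u)$ together with compatibility of $J$ with the metric gives $|F_H(u)|=|\partial_s u|$, so $|\partial_t u|\leq |F_H(u)|+|X_H(u)|=|\partial_s u|+|X_H(u)|$. By \textbf{(H2)} (and the relation $\omega(\,\cdot\,,X_H)=dH$), $|X_H|\leq C$ on $N$. Substituting yields the split
\[
|\tau_\beta(u)|\leq \|\beta\|_{C^0(N)}\Big(\int_{u^{-1}(N)}|\partial_s u|^2\,ds\,dt \;+\; C\!\int_{u^{-1}(N)}|\partial_s u|\,ds\,dt\Big).
\]
The first term is bounded by $\|\beta\|_{C^0(N)}\cdot E(u)$ directly from \eqref{e:energy}.

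The main point, and the only nontrivial step, is to bound the second term by $(C/\delta)\cdot E(u)\cdot \|\beta\|_{C^0(N)}$. Let $A=\{s\in\R : u(s,\,\cdot\,)(S^1)\cap N\neq\emptyset\}$. By \textbf{(H1)} applied to the loop $x=u(s,\,\cdot\,)$, for every $s\in A$,
\[
\|\partial_s u(s,\,\cdot\,)\|_{L^2(S^1)}=\|F_H(u(s,\,\cdot\,))\|_{L^2(S^1)}>\delta.
\]
Then Cauchy--Schwarz on $S^1$ (which has unit measure) and the trivial inclusion $u^{-1}(N)\subset A\times S^1$ give
\[
\int_{u^{-1}(N)}|\partial_s u|\,ds\,dt \leq \int_A \|\partial_s u(s,\,\cdot\,)\|_{L^2(S^1)}\,ds \leq \int_A \frac{\|\partial_s u(s,\,\cdot\,)\|_{L^2(S^1)}^2}{\delta}\,ds \leq \frac{E(u)}{\delta}.
\]
Combining the two pieces yields the stated estimate. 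For the continuation case, the same derivation works verbatim because the homotopy $(H_{k,s},J_{k,s})$ stays within the class satisfying \textbf{(H1)}--\textbf{(H2)} by Lemma \ref{Lemma cofinal family of hams}, and $\partial_s u = -J_s F_{H_s}(u)$ still gives $|\partial_s u| = |F_{H_s}(u)|$ pointwise, so the factor $|X_{H_s}|\leq C$ on $N$ and the lower bound $\|F_{H_s}(u(s,\cdot))\|_{L^2}>\delta$ on $A$ remain valid uniformly in the homotopy parameter.
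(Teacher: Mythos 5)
Your proof is correct and follows essentially the same strategy as the paper: expand $u^*\beta = \beta(\partial_s u, J\partial_s u + X_H)\,ds\wedge dt$ using Floer's equation, split into a quadratic term controlled directly by $E(u)$ and a linear term controlled by $C$ from \textbf{(H2)}, and use \textbf{(H1)} to control the linear term by $E(u)/\delta$. The only (cosmetic) difference is that the paper handles the linear term by first bounding the measure $|\mathcal{S}_u|\leq E(u)/\delta^2$ via Chebyshev and then applying Cauchy--Schwarz over $\mathcal{S}_u\times S^1$, whereas you apply Cauchy--Schwarz slice-by-slice and then multiply by the factor $\|\partial_s u(s,\cdot)\|_{L^2(S^1)}/\delta > 1$ supplied by \textbf{(H1)}; both routes are equivalent and yield the same constant $(1+C/\delta)$.
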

\begin{proof}
Let $u$ be a Floer trajectory for the given data $(H,J)$ (the proof for continuation maps is analogous). Denote $u_s = u|_{\{s\}\times S^1}$ for $s\in \R$, then $u_s\in \mathcal{L}M_{2\epsilon}$ as the maximum principle applies on $M_{2\epsilon}^{\out}$ by the construction of the data in Lemma \ref{Lemma cofinal family of hams}.
Let
\begin{equation}\label{e:mathcalS}
\mathcal{S}_u:=\Big\{s\in\R\ \Big|\ u_s \notin \mathcal{L}(M_{2\epsilon}\setminus N)\Big\}\subset \R
\end{equation}
be the values $s\in\R$ for which $u(s,t)\in N$ for some $t\in S^1$. Using \textbf{(H1)} and the definition $ E(u)=\int_{\R}\Vert F_H(u_s)\Vert^2\, ds$, Chebyshev's inequality implies
\begin{equation}\label{Equation estimate of mathcalSu in terms of energy}
|\mathcal{S}_u|\leq \frac{E(u)}{\delta^2},
\end{equation}
where $|\mathcal{S}_u|$ is the Lebesgue measure of $\mathcal{S}_u$. We note that
\[
\forall\,(s,t)\in\R\times S^1,\quad\beta|_{u(s,t)}\neq 0\;\Longrightarrow\; s\in \mathcal S_u, \; u(s,t)\in N \,\textrm{ and }\, X_H|_{u(s,t)}=X_{H_0}|_{u(s,t)},
\]
since $H=H_0$ on $M_{\epsilon}$ by construction.
Using \eqref{Equation estimate of mathcalSu in terms of energy} and Cauchy-Schwarz:
\begin{align*}
|\tau_\beta(u)|= \Big|\int_{\mathcal \R\times S^1} u^*\beta\Big|&=\Big|\int_{\mathcal S_u\times S^1} \beta (\partial_s u, J\partial_s u + X_H )\, ds\, dt\Big|
\\
&\leq
\Vert\beta\Vert\Big( \int_{\mathcal S_u\times S^1} |\partial_s u|^2\, ds\, dt 
+ \Vert dH_0\Vert_{C^1(N)}\, \int_{\mathcal S_u\times S^1}  |\partial_s u|\, ds\, dt\Big)\\
&\leq \Vert\beta\Vert \Big(E(u)+C\sqrt{E(u)}\cdot \sqrt{|\mathcal{S}_u|}\Big)\\
&\leq \Vert\beta\Vert(1+\tfrac{C}{\delta})E(u).\qedhere
\end{align*}
\end{proof}

\begin{corollary}\label{c:SH}
If a class in $H^2_c(M)\cong H^2(M,M^{\out})$ has a representative $\beta$ compactly supported in $N\subset \mathrm{int}(M^{\inn})$ with
\[
\Vert\beta\Vert_{C^0(N)}<(1+\tfrac{C}{\delta})^{-1},
\]
then twisted symplectic cohomology $SH^*(M,\omega,\theta)_{\tau(\beta)} = \varinjlim HF^*(H_k,J_k)$ is well-defined using the data from Lemma \ref{Lemma cofinal family of hams}. So for any class $\beta\in H^2_c(M)\cong H^2(M,M^{\out})$, the group $SH^*(M,\omega,\theta)_{\tau(s\beta)}$ is defined for all sufficiently small $s\geq 0$.
\end{corollary}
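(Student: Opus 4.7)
The proof is essentially an assembly of the pieces already in place: the new energy estimate (Theorem \ref{Theorem Energy estimate 2 for Floer traj}) plus the abstract well-definedness criterion (Lemma \ref{Lemma have a c<1 bound}), applied to the cofinal family supplied by Lemma \ref{Lemma cofinal family of hams}. I would organise it as follows.

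\textbf{Step 1 (reduction to a compactly supported representative).} A class in $H^2(M,M^{\out})\cong H^2_c(M)$ can always be represented by a closed $2$-form compactly supported in $\mathrm{int}(M^{\inn})$. Its transgression $\tau(\beta)\in H^1(\mathcal L M)$ is represented by the closed $1$-form $\tau_\beta$ on $\mathcal L M$ given by $\tau_\beta(\xi)=\int_0^1 \beta(\xi(t),\partial_t x)\,dt$, and by Lemma \ref{Lemma SH twisted change of basis trick} it suffices to verify well-definedness for this specific representative. After possibly enlarging $N$ (keeping $N\subset \mathrm{int}(M^{\inn})$ compact), we may also assume $\mathrm{supp}\,\beta\subset N$, so that Lemma \ref{Lemma cofinal family of hams} applies and produces constants $\delta,C>0$, a cofinal family $(H_k,J_k)$, and monotone homotopies $(H_{k,s},J_{k,s})$.

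\textbf{Step 2 (verification of the energy criterion).} The combinatorial shape of the twisted Floer differential and continuation maps is
\[
\partial y=\sum_{u\in\mathcal M(x,y)}\epsilon(u)\,t^{\tau_\omega(u)+\tau_\beta(u)}\,t^{\mathcal H(x)-\mathcal H(y)}\,x,
\]
so what must be controlled is exactly the implication \eqref{e:implication} with $\eta=\tau_\beta$. For every $u$ arising from the data of Step 1, Theorem \ref{Theorem Energy estimate 2 for Floer traj} gives
\[
-\tau_\beta(u)\;\leq\;|\tau_\beta(u)|\;\leq\;\|\beta\|_{C^0(N)}\bigl(1+\tfrac C\delta\bigr)\,E(u)\;=\;c\cdot E(u),
\]
with $c:=\|\beta\|_{C^0(N)}(1+C/\delta)<1$ by the smallness hypothesis. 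Lemma \ref{Lemma have a c<1 bound} then yields the required a priori bound $\tau_\omega(u)\leq (p_{x,y}+C')/(1-c)$, hence Gromov compactness shows that the sums defining the twisted differential and twisted continuation maps are finite in each $t$-degree and converge in $\Lambda$. The same argument applies to continuation cylinders for the homotopies $(H_{k,s},J_{k,s})$, since Theorem \ref{Theorem Energy estimate 2 for Floer traj} is stated for these as well. Consequently $CF^*(H_k,J_k)_{\tau(\beta)}$ is a well-defined chain complex, and the monotone continuation maps assemble into a direct system whose limit is $SH^*(M,\omega,\theta)_{\tau(\beta)}$.

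\textbf{Step 3 (second assertion).} For an arbitrary class $[\beta]\in H^2_c(M)$, fix a compactly supported representative $\beta$ and pick $N\supset \mathrm{supp}\,\beta$ as in Step 1. The constants $\delta,C$ from Lemma \ref{Lemma cofinal family of hams} depend only on $N$ and on the choice of cofinal family, not on $s$. Since $\|s\beta\|_{C^0(N)}=s\|\beta\|_{C^0(N)}\to 0$ as $s\to 0^+$, the smallness hypothesis is satisfied for all $s\in[0,s_0]$ with $s_0:=\|\beta\|_{C^0(N)}^{-1}(1+C/\delta)^{-1}$, and Step 2 then gives a well-defined twisted symplectic cohomology $SH^*(M,\omega,\theta)_{\tau(s\beta)}$ in that range.

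\textbf{Main obstacle.} The substantive point is not this assembly but the energy estimate itself (Theorem \ref{Theorem Energy estimate 2 for Floer traj}), whose proof exploits the sharp control on the set $\mathcal S_u$ via Chebyshev applied to the lower bound \textbf{(H1)}. Given that input, the present corollary is a short deduction, and the only care needed is to ensure that one applies Lemma \ref{Lemma SH twisted change of basis trick} to a single representative of $\tau(\beta)$ compactly supported in $N$, so that the transgression pairing with Floer cylinders really equals $\int u^*\beta$ and the energy estimate can be invoked verbatim.
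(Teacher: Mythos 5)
Your proof is correct and follows the same route as the paper, whose own proof is the one-liner: set $c:=\Vert\beta\Vert(1+\tfrac{C}{\delta})<1$, invoke Theorem \ref{Theorem Energy estimate 2 for Floer traj} to get $|\tau_\beta(u)|\leq c\,E(u)$, and conclude via Lemma \ref{Lemma have a c<1 bound}. Your Steps 1 and 3 simply spell out the reductions that the paper makes in the surrounding text (choosing a compactly supported representative via Lemma \ref{Lemma SH twisted change of basis trick}, and rescaling by $s$), so there is no genuine difference of approach.
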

\begin{proof}
Let $c:=\Vert\beta\Vert (1+\tfrac{C}{\delta})<1$. By Theorem \ref{Theorem Energy estimate 2 for Floer traj}, the claim follows by Lemma \ref{Lemma have a c<1 bound}. 
%
\end{proof}
\begin{remark}\label{Remark dependent on cofinal family}
It is not yet clear whether $SH^*(M,\omega,\theta)_{\tau(\beta)}$ is independent of the chosen cofinal family $H_k$, as we cannot a priori control $\delta$ for general monotone homotopies between two given cofinal families.
Nevertheless independence on this choice follows a posteriori from the isomorphism with $SH^*(M,\omega+\beta,\theta)$ in Theorem \ref{t:invariance2}$.(2)$.
\end{remark}
As part of the direct limit, we have the canonical $\Lambda$-linear homomorphism \eqref{Equation c* map twisted}
since $HF^*(H_0,J_0;\omega,\theta)\cong QH^*(M,\omega)$ (as a vector space, $QH^*(M,\omega)=H^*(M)\otimes \Lambda$).
%
\subsection{Product structure}\label{Subsection product structure}
To conclude the proof of Theorem \ref{t:invariance2}.(1) we need to explain why $SH^*(M,\omega,\theta)_{\tau(\beta)}$ admits a unital ring structure given by the pair-of-pants product, under the assumptions in Corollary \ref{c:SH}. We refer to \cite{Ritter3} for the detailed construction of the product. This uses an auxiliary $1$-form $\gamma$ defined on the pair-of-pants $P$, satisfying $d\gamma \leq 0$ (this ensures the maximum principle), and $\gamma$ is equal to a positive constant multiple of $dt$ near each end. We may choose $dt$ at the two positive ends, and $2\,dt$ at the negative end. 
The product involves the moduli space $\mathcal{M}(x;y,z;H,J)$ of rigid solutions $u:P\to M$ of the equation $(du-X\otimes \gamma)^{0,1}=0$ asymptotic to $x$ at the negative end and $y,z$ at the positive ends, where $X=X_H$. We obtain a $\Lambda$-linear map $HF^*(H)\otimes HF^*(H) \to HF^*(2H)$ which on $1$-orbits is: 
\begin{equation}\label{Equation pop product}
y\otimes z\quad \longmapsto \sum_{u\in\mathcal M(x;y,z;H,J,\gamma)}t^{\tau_{\omega}(u)}t^{2\mathcal H(x)-\mathcal H(y)-\mathcal H(z)}x.
\end{equation}
Here we abuse notation slightly, $\tau_{\omega}(u)=\int_P u^*\omega=\tau_{\omega}(P_-)+\tau_{\omega}(P_{+,1}) + \tau_{\omega}(P_{+,2})$ where we decompose $P=P_-\cup P_{+,1} \cup P_{+,2}$ as the union of three cylinders (whose images via $u$ yield three $1$-chains in $\mathcal{L}M$) asymptotic to the three ends, such that the positive boundary of $P_-$ is the figure eight-loop consisting of the two negative boundaries of $P_{+,1}$, $P_{+,2}$. The choice of decomposition will not affect the weights (here it is crucial that we are twisting by a class in $H^1(\mathcal{L}M)$ that arises as the transgression of a class in $H^2(M)$). The exponent of $t$ in \eqref{Equation pop product} is precisely the \textit{topological energy}
\[
E_{top}(u) := \int_P u^*\omega - d(u^*H \wedge \gamma),
\]
which is a homotopy invariant that bounds from above the (geometric) energy 
\[
E(u):=\tfrac{1}{2}\int_P \|du-X\otimes \gamma\|^2\, \mathrm{vol}_P=\int_P u^*\omega - d(u^*H ) \wedge \gamma,
\]
since $d\gamma\leq 0$. Explicitly: 
$
E(u)\leq \tau_\omega(u)+2\mathcal H(x)-\mathcal H(y)-\mathcal H(z).
$
This ensures that the above map is well-defined. By considering continuation maps as in \cite{Ritter3} a direct limit of these maps defines a $\Lambda$-bilinear homomorphism $SH^*(M;\omega,\theta)^{\otimes 2}\to SH^*(M;\omega,\theta)$ called the pair-of-pants product. The same argument as in \cite{Ritter3} shows that the element $c^*(1)\in SH^*(M;\omega,\theta)$ is a unit, so the map in \eqref{Equation c* map twisted} is a unital $\Lambda$-algebra homomorphism, using the quantum product on $QH^*(M,\omega)$. 
 
Fix a compact subregion $P'\subset P$ independent of $u$ such that $P\setminus P'$ is the disjoint union of the three cylindrical ends. Abbreviate $A:=\mathrm{Area}(P')$. We claim that
\[
E(u)\geq A\delta^2\quad\Longrightarrow\quad |\tau_\beta(u)|\leq \Vert\beta\Vert_{C^0(N)}C'E(u),
\]
for some constant $C'>0$ independent of $u$. Let $u_1,u_2,u_3$ be the restriction of $u$ to the three cylindrical ends with corresponding sets $\mathcal S_{u_1},\mathcal S_{u_2},\mathcal S_{u_3}$, as in \eqref{e:mathcalS}. Let
\[
P_u:=P'\cup (\mathcal S_{u_1}\times S^1)\cup (S_{u_2}\times S^1)\cup (S_{u_3}\times S^1).
\]
Assuming $E(u)\geq A \delta^2$, we obtain the following generalisation of \eqref{Equation estimate of mathcalSu in terms of energy}:
\[
\mathrm{Area}(P_u)\leq A+\tfrac{1}{\delta^2}E(u)\leq \tfrac{2}{\delta^2}E(u).
\]
We estimate $-\tau_\beta(u)$ from above as in Theorem \ref{Theorem Energy estimate 2 for Floer traj} substituting $\mathcal S_u\times S^1$ with $P_u$: 
\begin{align*}
\Big|\int_{P_u} u^*\beta\Big| &= \Big|\int_{P_u} \beta\circ (du-X\otimes\gamma)^{\otimes 2} + \beta(du-X\otimes \gamma,X)\,\gamma+\beta(X,du-X\otimes \gamma)\,\gamma\Big|\\
&\leq \Vert\beta\Vert_{C^0(N)} \Big(E(u)+2C\Vert \gamma\Vert_{C^0(P)}\sqrt{E(u)}\cdot 
\tfrac{1}{\delta}\sqrt{2E(u)}\Big),
%
%
\end{align*}
where we used that $\beta(X,X)=0$. This proves the claim. 
Corollary \ref{c:SH} together with the construction of the product conclude the proof of Theorem \ref{t:invariance2}.(1).\hfill\qed
\section{Invariance of Symplectic Cohomology}
\label{Section Compact deformation invariance}

\subsection{Small perturbations: proof of Theorem \ref{t:invariance2}.(2)}
\label{Subsection Invariance under small deformations}
To simplify notation, we may assume that $M=M_a=D\setminus\partial D$, where $D$ is some convex domain with a collar $C$ and $M^\inn=\overline{D\setminus C}$.
By Proposition \ref{p:quantitative}, for $(\mu,\lambda)\in \Omega^2(D,C)$ which is closed and $C^1$-small, we obtain a convex domain $(D,\omega+\mu,\theta+\lambda)$ and a closed form $\beta\in\Omega^2_c(M)$ (in a class corresponding to $[\mu,\lambda]$) with $\|\beta\|_{C^0(M)}\leq K \|(\mu,\lambda)\|_{C^1(D,C)}$, and admitting an isomorphism
\[
(M,\omega+\beta,\theta)^{\wedge}\stackrel{_{\cong}}{\longrightarrow} (M,\omega+\mu,\theta+\lambda)^{\wedge}.
\]
By Theorem \ref{Theorem invariance under iso of SH}, 
\[
SH^*(M,\omega+\beta,\theta)\cong SH^*(M, \omega+\mu,\theta+\lambda).
\]
To prove Theorem \ref{t:invariance2}.(2) it remains to show $SH^*(M,\omega+\beta,\theta)\cong SH^*(M,\omega,\theta)_{\tau(\beta)}$, for $\Vert \beta\Vert_{C^0(M)}$ small enough, where the right-hand side is defined by Theorem \ref{t:invariance2}.(1). The twisting is needed so that the groups on both sides have the same system of local coefficients. To build the isomorphism, it suffices to construct a sequence of commutative diagrams for the twisted Floer cohomologies of $M$:
\begin{equation}\label{Equation commutative diagram for continuations}
\xymatrix@R=14pt{ 
HF^*(\omega,\theta;H_{k+1},J_{0,k+1})_{\tau(\beta)}
\ar@{<-}^-{\psi_{k+1}}[r] 
&  
HF^*(\omega+\beta,\theta;H_{k+1},J_{1,k+1})
\\
HF^*(\omega,\theta;H_{k},J_{0,k})_{\tau(\beta)}
\ar@{<-}[r]^-{\psi_{k}} 
\ar@{->}^-{\varphi_{0,k}}[u]
&
HF^*(\omega+\beta,\theta;H_{k},J_{1,k})
\ar@{->}_{\varphi_{1,k}}[u]
},
\end{equation}
where $(H_k,J_{0,k})$ and the continuation maps $\varphi_{0,k}$ are defined by data as in Lemma \ref{Lemma cofinal family of hams}, whilst $(H_k,J_{1,k})$ and the continuation maps $\varphi_{1,k}$ are defined by data used in the construction of $SH^*(M,\omega+\beta,\theta)$.
Thus, the direct limits over the vertical maps respectively define $SH^*(M,\omega,\theta)_{\tau(\beta)}$ and $SH^*(M,\omega+\beta,\theta)$.
We now construct the horizontal maps $\psi_k$. To simplify the notation, we will drop all subscripts $k$.
The map $\psi$ is a continuation map, where for $s\in \R$ we vary the pair
\[
(\omega_s:=\omega+\rho(s)\beta,\,J_s)
\]
but keep the Hamiltonian fixed. Here $\rho:\R\to[0,1]$ is a function and $J_s$ is an $\omega_s$-compatible almost complex structure of contact type at infinity, satisfying $\rho_s=0$ and $J_s=J_0$ for $s\leq 0$; and $\rho_s = 1$, $J_s=J_1$ for $s\geq 1$. 

Let $\mathcal{M}(x,y;\omega_s,H,J_s)$ be the set of rigid solutions $u: \R \times S^1 \to M$ from $x$ to $y$ of the equation $F^s_{H}(u)=J_s\partial_s u$, where $F^s_H(x):=\partial_t x - X_s$ and $X_s$ is the Hamiltonian vector field of $H$ with respect to $\omega_s$, so $\omega_s(\cdot,X_s)=dH$. Transversality is standard, since we allow $J_s$ to vary and we assumed $(M,\omega)$ to be weakly monotone.
At the chain level, $\psi: CF^*(\omega+\beta,H,J_{1})
\longrightarrow CF(\omega,H,J_{0})_{\tau(\beta)} $ is defined on $1$-orbits as follows:
\begin{equation}\label{Equation psi map definition}
\psi(y)  \;= \sum_{u \in \mathcal M(x,y;\omega_s,H,J_s)} \epsilon(u)t^{(\tau_\omega+\tau_\beta)(u)}t^{\mathcal H(x)-\mathcal H(y)} x.
\end{equation}
%
%

Provided the $\psi$ maps are well-defined, standard Floer theory arguments imply:
\begin{enumerate}
	\item Diagram \eqref{Equation commutative diagram for continuations} commutes at the chain level up to chain homotopy.
	\item The $\psi$ maps are isomorphisms, their inverse being the continuation maps $\bar\psi$ obtained from the reverse deformation $(\bar{\omega}_s,\bar J_s):=(\omega_{-s},J_{-s})$. Indeed, $\bar{\psi}\psi$ and $\psi\bar{\psi}$ correspond (up to chain homotopy) to a deformation where the symplectic form and the almost complex structure are both held constant, and thus the map is the identity for dimension reasons (moduli spaces are never rigid, due to an $s$-translation symmetry, unless Floer continuation solutions are constant).
	\item The isomorphism $\psi_\infty=\varinjlim \psi_k:SH^*(M,\omega,\theta)_{\tau(\beta)}\to SH^*(M,\omega+\beta,\theta)$ is compatible with the unital product structure (i.e.\,the $\psi_k$ maps fit into commutative diagrams similar to those used in \cite{Ritter3} to construct the product). This requires an energy estimate for pairs-of-pants, but just as in Section \ref{Subsection product structure} this estimate will follow once one has the energy estimate for Floer cylinders (Theorem \ref{Theorem Energy estimate 1}). This yields Theorem \ref{t:invariance2}.(2).
\end{enumerate} 
We now prove that $\psi$ is well-defined, if $\Vert\beta\Vert_{C^0(N)}$ is small. As $\omega_s$ only varies on $M^{\inn}$, we can keep $J_s$ independent of $s$ on $M^{\out}$ and the maximum principle applies on $M^{\out}$. So we only need to bound the energy $
E(u)=\int_{\R\times S^1}|\partial_s u|^2_sdsdt,
$
where $|\cdot|_s$ is the norm associated to the Riemannian metric $\omega_s(\cdot,J_s\cdot)$.
Thus
\[
E(u)=\int_{\R\times S^1}\!\!\omega_s(\partial_su,\partial_tu-X_s)\, ds\, dt=(\tau_\omega+\tau_\beta)(u)+\int_{\R\times S^1}\!\!(\rho(s)-1)\, u^*\beta+\mathcal H(x)-\mathcal H(y).
\]
By Lemma \ref{Lemma have a c<1 bound}, the following theorem implies that $\psi$ is well-defined.
\begin{theorem}\label{Theorem Energy estimate 1}
For $\delta,C$ as above, there is a constant $c''>0$ such that for any sufficiently small closed $2$-form $\beta$ compactly supported on $M$,
\[
\Big|\int_{\R\times S^1}(\rho(s)-1)\,u^*\beta\Big|\ \leq \ \int_{\R\times S^1}|u^*\beta| \ \leq \ \Vert\beta\Vert_{C^0(M)}\cdot c''\Big(1+\frac{C}{\delta}\Big)\cdot E(u),
\]
for all $u\in \mathcal{M}(x,y;\omega_s,H,J_s)$.
\end{theorem}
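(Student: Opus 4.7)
The strategy closely parallels the proof of Theorem \ref{Theorem Energy estimate 2 for Floer traj}, with modifications to accommodate the $s$-dependence of $\omega_s$, $J_s$, and $X_s$. The first inequality is immediate from $|\rho(s)-1|\leq 1$. For the second, one sets $\mathcal{S}_u := \{s\in\R : u_s(S^1)\cap N\neq \emptyset\}$; since $\beta$ is supported in $N\subset \mathrm{int}(M^{\inn})$, the form $u^*\beta$ vanishes outside $\mathcal{S}_u\times S^1$.

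The key new point is that hypothesis \textbf{(H1)} propagates from $F_H$ to the perturbed operator $F^s_H$. From $\omega_s(\cdot,X_s) = dH = \omega(\cdot,X_H)$ one derives $\omega(\cdot, X_H - X_s) = \rho(s)\,\beta(\cdot, X_s)$, so for $\|\beta\|_{C^0(N)}$ sufficiently small, $X_s$ is a uniform $O(\|\beta\|_{C^0(N)})$ perturbation of $X_H$ on $N$, and the metrics $g_s = \omega_s(\cdot, J_s\cdot)$ are uniformly equivalent to the fixed reference metric $g_0 = \omega(\cdot, J_0\cdot)$. Combined with \textbf{(H1)}, this yields $\|F^s_H(u_s)\|\geq \delta/2$ whenever $u_s$ intersects $N$. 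Since Floer's equation together with the $\omega_s$-compatibility of $J_s$ gives $\|\partial_s u\|_s = \|F^s_H(u_s)\|_s$, Chebyshev's inequality as in \eqref{Equation estimate of mathcalSu in terms of energy} produces
\[
|\mathcal{S}_u|\leq \frac{c_0\, E(u)}{\delta^2}
\]
for a constant $c_0$ independent of $u$ and of $\beta$ in the allowed range.

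Using the relation $\partial_t u = J_s\partial_s u + X_s$ one expands
\[
u^*\beta = \beta(\partial_s u, J_s\partial_s u)\, ds\wedge dt + \beta(\partial_s u, X_s)\, ds\wedge dt
\]
on $\mathcal{S}_u\times S^1$. By \textbf{(H2)} and the smallness of $\|\beta\|_{C^0(N)}$ one has $|X_s|_s\leq c_1 C$ uniformly on $N$. Bounding the first summand directly and applying Cauchy--Schwarz to the second, exactly as in the proof of Theorem \ref{Theorem Energy estimate 2 for Floer traj}, yields
\[
\int_{\R\times S^1}|u^*\beta|\leq \|\beta\|_{C^0(M)}\bigl(c_2 E(u) + c_3\, C\sqrt{|\mathcal{S}_u|}\,\sqrt{E(u)}\bigr)\leq \|\beta\|_{C^0(M)}\cdot c''\Bigl(1+\tfrac{C}{\delta}\Bigr) E(u).
\]

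The main obstacle is establishing the uniform perturbed version of \textbf{(H1)} for $F^s_H$: this is exactly where the hypothesis that $\beta$ is $C^0$-small is essential, since without it one cannot control the deviation of $X_s$ from $X_H$ on $N$ and the Chebyshev bound on $|\mathcal{S}_u|$ breaks down. Once that uniform bound is in place, the metric comparability constants and the bound on $|X_s|_s$ are all uniform in $s\in \R$ and in the continuation parameter, and the estimate reduces to a direct analogue of Theorem \ref{Theorem Energy estimate 2 for Floer traj}.
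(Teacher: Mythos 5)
Your proof is correct and takes essentially the same route as the paper's, which simply observes that the $s$-dependent norms $|\cdot|_s$ are uniformly equivalent to $|\cdot|$ (since $\omega_s-\omega=\rho(s)\beta$ is compactly supported and $C^0$-small) and then asserts that the argument of Theorem \ref{Theorem Energy estimate 2 for Floer traj} goes through. Your write-up usefully spells out the one step the paper leaves implicit — that hypothesis \textbf{(H1)} propagates from $F_H$ to the perturbed operator $F^s_H$ precisely because $X_s$ is a uniformly $O(\|\beta\|_{C^0(N)})$ perturbation of $X_H$ on $N$, after which the Chebyshev bound on $|\mathcal{S}_u|$ and the Cauchy--Schwarz estimate carry over verbatim.
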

\begin{proof}
To bound $\int |u^*\beta|$, we argue as in the proof of Theorem \ref{Theorem Energy estimate 2 for Floer traj} except we now work with norms $|\cdot|_s$ depending on $s$. But since $\omega$ and $\omega_s$ differ only on a compact set, these norms are equivalent. So there is a constant $c''>0$ such that $\frac{1}{c''}|\cdot|_s\leq|\cdot|\leq c''|\cdot|_s$ for all $s\in\R$. 
With this observation, the argument in Theorem \ref{Theorem Energy estimate 2 for Floer traj} goes through.
\end{proof}
\begin{remark}[Technical Remark about Gromov Compactness]
 The energy estimate of Theorem \ref{Theorem Energy estimate 1} is sufficient for the standard arguments of Gromov compactness to go through \cite[Thm 3.3]{Hofer-Salamon}. Indeed, the standard removal of singularities argument (e.g. see McDuff-Salamon \cite{McDuff-Salamon2}) involves considering bubbling that occurs at a specific value of $s$ when energy concentrates, and that argument applies in our setup because our form $\omega_s$ is closed. Moreover, in our argument we need a uniform $\hbar$-bound (the minimal energy represented by a non-constant $J$-holomorphic sphere) that works for $\omega_s$ for all $s\in \R$, which is crucial for Hofer-Salamon's argument \cite[Theorem 3.3]{Hofer-Salamon} to apply. A clean approach would be to separately show that $\hbar$ varies continuously in the metric $\omega_s(\cdot,J_s\cdot)$. A simpler but weaker argument goes as follows. We need to rule out the possibility of the vanishing of the infimum of $\hbar_s$ (the optimal $\hbar$-value for $J_s$), taking the infimum over the compact interval $C$ of values of $s$ for which $\omega_s$ is $s$-dependent. If this infimum were zero, it would imply the existence of a sequence $u_n$ of non-constant $J_{s_n}$-holomorphic spheres such that the energy $E(u_n)$ converges to zero. By passing to a subsequence we may assume $s_n$ converges to some value $s^*\in C$. The usual Gromov compactness argument then says that $u_n$ will converge to a $J_{s^*}$-holomorphic sphere, possibly with a bunch of bubbles, if the energy concentrates at certain points. Part of the proof is that the energy of this limit curve is the limit of the energies $E(u_n)$, if one remembers to take into account all the bubbles arising in the limit curve. In our case, this would imply that the limit curve has zero energy, so the limit is a point with no bubbles. By continuity this would imply that the $u_n$ eventually lie inside a contractible neighbourhood of that point, and therefore these spheres $u_n$ are homologically trivial, which in turn implies that their energy is zero, and thus the $u_n$ are constant for large $n$. Contradiction.
\end{remark}
\subsection{Long deformations: proof of Corollary \ref{t:invariance3}}
\label{Subsection Transgression invariant compact deformations}
Let $(M,\omega_s,\theta_s;{\zeta}_s)$ be a transgression invariant family of convex manifolds (Definition \ref{Definition Intro transgression compatible twist}). Thus,
\[
{\zeta}_s-{\zeta}_{s'}=\tau(\omega_{s'})-\tau(\omega_s)\in H^1(\mathcal{L}M),\qquad\forall\,s,s'\in [0,1].
\]
By Theorem \ref{t:invariance2}.(2), given any $s\in[0,1]$, there is a relatively open interval $I_s$ such that $s\in I_s\subset [0,1]$ and for all $s'\in I_s$ we have
\[
SH^*(M,\omega_s,\theta_s)_{{\zeta}_s-{\zeta}_{s'}}\cong SH^*(M,\omega_s,\theta_s)_{\tau(\omega_{s}'-\omega_s)}\cong SH^*(M,\omega_{s'},\theta_{s'}).
\]
By Lemma \ref{Lemma SH twist if change by exact form}, we can twist the above isomorphism by ${\zeta}_{s'}$:
\begin{equation}\label{e:isomo}
SH^*(M,\omega_s,\theta_s)_{{\zeta}_s}\cong SH^*(M,\omega_{s'},\theta_{s'})_{{\zeta}_{s'}},\qquad \forall\, s'\in I_s.
\end{equation}
As $[0,1]$ is compact, there is a sequence 
$0=s_0$, $\ldots$, $s_m=1$ with $s_{i+1}\in I_{s_i}$. Corollary \ref{t:invariance3} follows by composing the isomorphisms in \eqref{e:isomo} for $s=s_i$ and $s'=s_{i+1}$.\hfill\qed
%
\section{Twisted cotangent bundles of surfaces}
\label{Section Application: Twisted cotangent bundles of surfaces}

\subsection{Basic notation}
\label{Subsection Preliminary notation for cotangent bundles}
We review some background in the following two sections, but for the sake of brevity we refer the reader to \cite{CMP,GinzburgGurel,Benedetti} for a more extensive survey and for references on the topic of twisted cotangent bundles. 

Let $(N,g)$ be a closed Riemannian manifold. 
Let $\pi:T^*N\to N$ be the footpoint projection and let $\theta=p\,dq$ be the canonical $1$-form (so $\theta_{(q,p)}=p\circ d\pi$).
%
%
We identify $T^*N$ and $TN$ via the musical isomorphism 
\[
\flat:T_q N \rightarrow T_q^*N,\quad v \mapsto p=g_q(v,\cdot).
\]
%
%
For example, we have $\theta_{(q,v)}=g_q(v,d\pi\cdot)$. Write $g$ also for the dual metric on $T^*N$ and denote all norms by $|\cdot|$. 
The disc and sphere bundle of radius $r$ are
\[D_{r}^g=\{(q,v)\in TN: |v|\leq r\} \quad\qquad \Sigma_{r}^g=\partial D_{r}^g = \{(q,v)\in TN: |v|= r\}.
\]
The use of the letter $r$ here is for notational convenience and is not to be understood as a radial coordinate in a conical parametrization of a convex manifold as in \eqref{Equation Intro j}.

The connection determines a splitting:
\begin{equation}\label{Equation splitting TT*N}
T_{(q,p)}(TN)\cong T_q N \oplus T_q N, \quad \partial_t (q,v) \mapsto (\partial_t q,\nabla_{\!t}\, v)
%
%
\end{equation}
so that the first component is the map $\xi\mapsto d\pi\cdot\xi$ and $\nabla$ denotes the Levi-Civita connection.
Via \eqref{Equation splitting TT*N}, on $TN$ we get a Riemannian metric $g\oplus g$ and an almost complex structure $J_g$ compatible with the symplectic form $d\theta$, where 
\begin{equation}\label{e:dtheta}
J_g=\begin{pmatrix} 0 & -\mathrm{id} \\ \mathrm{id} & 0\end{pmatrix},\qquad d\theta=\begin{pmatrix} 0 & -g \\ g& 0\end{pmatrix}.
\end{equation}
Applying the inverse of the map in \eqref{Equation splitting TT*N} to $T_qN\oplus 0$ and $0\oplus T_qN$, we get the horizontal distribution $T^{\mathrm{hor}}_{(q,v)}TN$ and the vertical distribution $T^{\mathrm{vert}}_{(q,v)}TN$ on $T(TN)$:
\[
w\mapsto w^h\in T^{\mathrm{hor}}_{(q,v)}TN,\qquad w\mapsto w^\nu\in T^{\mathrm{vert}}_{(q,v)}TN,\qquad \forall\, q\in N,\ \ v,w\in T_qN.
\]
In particular, $T^{\mathrm{vert}}TN=\ker d\pi$. The tautological horizontal and vertical lifts yield two vector fields on $TN$:
\[
X_{(q,v)}=v^h \qquad \qquad Y_{(q,v)} = v^{\nu}.
\]
We recall that $X$ is the geodesic vector field of $g$, namely the Hamiltonian vector field for $(q,v)\mapsto\frac{1}{2}|v|^2$ using $d\theta$, and $Y$ is the Liouville vector field for $(d\theta,\theta)$, so $Y=v\,\partial_v$ in local coordinates. We will later use that for 1-forms $\beta$ on $N$, $\pi^*\beta(X)_{(q,v)}=\beta(v)$.

\subsection{Twisted cotangent bundles}
\label{Subsection Basic notation for twisted cotangent bundles}

Let $(N,g)$ be a closed Riemannian manifold of dimension $n>1$. Let $\sigma\in\Omega^2(N)$ be a closed $2$-form, called \textit{magnetic form}. The Lorentz force $\mathbb{Y}:TN\to TN$ is the bundle map determined by
\[
g_q(\mathbb{Y}_q(u),v)=\sigma_q(u,v),\qquad \forall\,q\in N,\ u,v\in T_qN.
\]

A smooth curve $\gamma:I\to N$ is a \textit{magnetic geodesic}, if it satisfies
\begin{equation}\label{e:mg}
\nabla_{\gamma'}\gamma'=\mathbb{Y}_\gamma(\gamma'),
\end{equation}
where $\nabla$ is the Levi-Civita connection for $g$. From the equation, it follows that $\gamma$ has constant speed $r:=|\gamma'|$. If we reparametrise $\gamma$ by arc-length and denote by $\dot{\gamma}$ the derivative of $\gamma$ with respect to this new parameter, \eqref{e:mg} becomes
\begin{equation}\label{e:mg2}
\nabla_{\dot\gamma}\dot\gamma=\frac{1}{r}\mathbb{Y}_\gamma(\dot\gamma).
\end{equation}
Closed magnetic geodesics $\gamma:\R/T\Z\to N$ with speed $r$ are exactly the critical points of the possibly multi-valued free-period action functional $\mathbb S_r$ defined on the space of all free loops of any period:
\[
\mathbb S_r(\gamma)=\int_0^T\frac{1}{2}\Big(|\gamma'(t)|^2+r^2\Big)dt-\int_{[0,1]\times\R/T\Z}\hat\gamma^*\sigma,
\]
where $\hat{\gamma}:[0,1]\times \R/T\Z$ is a connecting cylinder to a fixed reference loop in the free-homotopy class of $\gamma$.

Consider the twisted tangent bundle $(TN,\omega)$, where
\begin{equation}\label{Equation Magnetic Ct Bdle form omega}
\omega:=d\theta-\pi^*\sigma,
\end{equation}
which is weakly monotone as $c_1(TN,\omega)=0$. We now interpret magnetic geodesics as flow lines (up to reparametrization) for the Hamiltonian given by
\begin{equation}\label{Equation Ham used for magn geod}
\rho:TN\to\R,\qquad  \rho(q,v):= |v|.
\end{equation}
Let $W$ be the vertical vector field determined by the Lorentz force:
\[
W_{(q,v)}=(\mathbb{Y}(v))^{\nu},\qquad  \forall\,(q,v)\in TN.
\]

\begin{lemma}\label{Lemma Magnetic geodesic vector field}
The Hamiltonian vector field of $\rho$ with respect to $\omega$ is
\begin{equation}\label{Equation Xrho magn geod}
X_{\rho}=\tfrac{1}{\rho}(X+ W).
\end{equation}
Its flow lines in $\Sigma_{r}^g$ are the curves $(\gamma,r\dot{\gamma})$, where $\gamma$ is any solution of \eqref{e:mg2} parametrised by arc-length, and these are integral curves for the distribution $\ker \omega|_{\Sigma_{r}}$.
\end{lemma}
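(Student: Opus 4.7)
My plan is to verify all three claims by a direct computation in the horizontal/vertical splitting \eqref{Equation splitting TT*N}, using the explicit block form of $d\theta$ given in \eqref{e:dtheta} together with the defining identity $g_q(\mathbb{Y}(v),u)=\sigma_q(v,u)$ for the Lorentz force.

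\textbf{Step 1: Compute $d\rho$ and $\omega$ in the splitting.} Writing a tangent vector at $(q,v)$ as $\xi=u^h+w^\nu$ with $u,w\in T_qN$, parallel transport preserves the norm, so $d\rho(u^h)=0$, while the vertical derivative gives $d\rho(w^\nu)=\tfrac{1}{\rho}g_q(v,w)$. On the symplectic side, the matrix \eqref{e:dtheta} (with the paper's convention $\omega(\cdot,X_H)=dH$, normalised so that the geodesic vector field $X=v^h$ is the Hamiltonian vector field of $\tfrac12|v|^2$) yields
\[
d\theta\bigl((u_1^h+w_1^\nu),(u_2^h+w_2^\nu)\bigr)=g_q(w_1,u_2)-g_q(w_2,u_1),
\]
and $\pi^*\sigma$ only sees horizontal parts: $\pi^*\sigma(\xi_1,\xi_2)=\sigma_q(u_1,u_2)$.

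\textbf{Step 2: Verify \eqref{Equation Xrho magn geod}.} Take the candidate $\tfrac{1}{\rho}(X+W)=\tfrac{1}{\rho}\bigl(v^h+\mathbb{Y}(v)^\nu\bigr)$ and pair with a test vector $\xi=u^h+w^\nu$ through $\omega=d\theta-\pi^*\sigma$. Using Step 1 this gives
\[
\omega\bigl(\xi,\tfrac{1}{\rho}(X+W)\bigr)=\tfrac{1}{\rho}\bigl[g_q(w,v)-g_q(\mathbb{Y}(v),u)\bigr]-\tfrac{1}{\rho}\sigma_q(u,v).
\]
The defining property $g_q(\mathbb{Y}(v),u)=\sigma_q(v,u)=-\sigma_q(u,v)$ makes the last two terms cancel, leaving $\tfrac{1}{\rho}g_q(v,w)=d\rho(\xi)$, proving \eqref{Equation Xrho magn geod}.

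\textbf{Step 3: Identify the flow lines and the contact kernel.} The Hamilton equation for a curve $t\mapsto(q(t),v(t))$ in the splitting reads $\dot q=\tfrac{1}{\rho}v$ and $\nabla_t v=\tfrac{1}{\rho}\mathbb{Y}(v)$, where $\nabla_t$ is the covariant derivative along $q$. On $\Sigma^g_r$ we have $\rho\equiv r$; setting $\gamma:=q$ and $v=r\dot\gamma$ turns the first equation into $|\dot\gamma|=1$ (arc-length parametrisation), and $\nabla_t v = r\nabla_{\dot\gamma}\dot\gamma = \tfrac{1}{r}\mathbb{Y}(r\dot\gamma)=\mathbb{Y}(\dot\gamma)$ becomes $\nabla_{\dot\gamma}\dot\gamma=\tfrac{1}{r}\mathbb{Y}_\gamma(\dot\gamma)$, which is exactly \eqref{e:mg2}. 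Finally, since $\Sigma^g_r=\rho^{-1}(r)$, any $\xi\in T\Sigma^g_r$ satisfies $d\rho(\xi)=0$; by the Hamilton equation this is $\omega(\xi,X_\rho)=0$, so $X_\rho\in\ker(\omega|_{T\Sigma^g_r})$. As $\omega$ is symplectic and $\Sigma^g_r$ has codimension one, this characteristic line bundle is rank one and therefore spanned by $X_\rho$.

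\textbf{Main obstacle.} Nothing is truly hard: the calculation is routine once the signs are pinned down. The only real pitfall is matching sign conventions: the matrix form of $d\theta$ in \eqref{e:dtheta}, the paper's convention $\omega(\cdot,X_H)=dH$, and the definition of $\mathbb{Y}$ via $g(\mathbb{Y}u,w)=\sigma(u,w)$ all enter, and a single misplaced sign would destroy the cancellation between $g(\mathbb{Y}(v),u)$ and $\sigma(u,v)$ that is the heart of Step 2. I would double-check these by testing on the purely geodesic case $\sigma=0$ (where $X_\rho$ must reduce to $\tfrac{1}{\rho}X$) before presenting the general computation.
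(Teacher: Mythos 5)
Your proof is correct and uses the same core ingredients as the paper: the horizontal/vertical splitting \eqref{Equation splitting TT*N}, the block matrix form of $d\theta$ from \eqref{e:dtheta}, and the defining symmetry of the Lorentz force. The only organizational difference is that the paper first reduces \eqref{Equation Xrho magn geod} to the single identity $d\theta(\cdot,W)=\pi^*\sigma(\cdot,X)$ (using $d\theta(\cdot,X)=\rho\,d\rho$ and $\pi^*\sigma(\cdot,W)=0$) and then checks it on horizontal and vertical test vectors, whereas you write out the full pairing $\omega(\xi,\tfrac{1}{\rho}(X+W))$ in one shot; both routes hinge on exactly the same cancellation $-g(\mathbb{Y}(v),u)-\sigma(u,v)=0$, and your sign bookkeeping is consistent with the paper's conventions throughout.
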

\begin{proof}
As $d\theta(\cdot,X)=d(\frac{1}{2}{\rho}^2)={\rho}\,d{\rho}$ and $\pi^*\sigma(\cdot,W)=0$ (as $W$ is vertical),
\eqref{Equation Xrho magn geod} is equivalent to $d\theta(\cdot,W)=\pi^*\sigma(\cdot,X)$. 
Using \eqref{e:dtheta},
$
d\theta(T^{\mathrm{vert}}TN,W) = 0
$
since $W$ is also vertical and, for $w^h\in T_q^{\mathrm{hor}}N$, we deduce the required equality:
\[
d\theta(w^h,W)=-g(w,\mathbb{Y}(v)) = -\sigma(v,w)=\pi^*\sigma(w^{h},X).
\]
Abbreviate $x=(\gamma(s),r\dot{\gamma}(s))\in TN$.
Using \eqref{e:mg2}, $\frac{1}{r}W_x=\frac{1}{r}\mathbb{Y}(r\dot{\gamma})^{\nu}=r(\nabla_{\dot{\gamma}}\dot{\gamma})^\nu$. Thus, $\partial_s (\gamma,r\dot{\gamma})=\dot{\gamma}^h+ r(\nabla_{\dot{\gamma}}  \dot{\gamma})^\nu=\frac{1}{r}X_x+\frac{1}{r} W_x$.
The final claim is immediate since $\omega(\cdot,X_{\rho})=d\rho=0$ as $\rho$ is constant on $\Sigma_{\rho}$.
\end{proof}

Let us now assume that $N$ is an oriented surface. Let $\jmath:TN\to TN$ be fibrewise rotation by $\frac{\pi}{2}$, and $\mu$ the Riemannian area form. 
Then $(X,Y,H,V)$ is a positively oriented frame with respect to $d\theta\wedge d\theta$, where
\[
H_{(q,v)}=(\jmath\, v)^{h}, \qquad\qquad V_{(q,v)}=(\jmath\, v)^{\nu}.
\]
Here $V$ is generated by the fibrewise rotation $e^{it}:(q,v)\mapsto (q,e^{it}v)$. Following \cite{Gudmundsson-Kappos}, one verifies the Lie bracket relations
\begin{equation}\label{e:bracket}
\begin{aligned}
&[Y,X]=X,\quad &[Y,H]=H,\qquad &[Y,V]=0,\\
&[V,X]=H, \quad &[H,V]=X, \qquad &[X,H]=\rho^2K\, V,
\end{aligned}
\end{equation}
where $K:M\to \R$ is the Gaussian curvature for $g$.
The linear algebra dual coframe is $(\tfrac{1}{\rho^2} \theta,  \tfrac{1}{\rho} d\rho,  \tfrac{1}{\rho^2}\eta, \tau)$, where $\eta = \jmath^*\theta$, and $\tau= \frac{1}{\rho^2}\,g(\,(\,\cdot\,)^{\nu},\jmath\, v)$ is an $S^1$-connection form on every $\Sigma_{r}^g$ with curvature $K\mu$:
\[
\tau(V)=1,\qquad\qquad d\tau = - K\, \pi^*\mu.
\]
The Lorentz force has the expression $\mathbb{Y}(v)=f\jmath\, v$, where $f:N\to \R$ is the unique function satisfying $\sigma=f\mu$. Then, $W=(f\circ \pi) V$ and \eqref{e:mg2} becomes
\begin{equation}\label{e:kappaf}
\kappa_\gamma=\tfrac{1}{r} f(\gamma),
\end{equation}
where $\kappa_\gamma$ is the geodesic curvature of $\gamma$, as follows from the identity $\nabla_{\dot{\gamma}} \dot{\gamma} =\kappa_{\gamma} \, \jmath \,\dot{\gamma}$.
\subsection{Convexity for twisted cotangent bundles}
\label{Subsection Convexity for twisted cotangent bundles}
We now investigate when $(D_r^g,\omega)$, for $\omega$ as in \eqref{Equation Magnetic Ct Bdle form omega}, has boundary of positive contact-type, i.e. there is a positive contact form $\alpha_r\in \Omega^1(\Sigma_r^g)$ with $d\alpha_r=\omega|_{\Sigma^g_r}$. To this purpose, we recall the Gysin sequence
\begin{equation}\label{e:gysin}
H^1(N)\longrightarrow H^1(\Sigma_r^g)\longrightarrow H^{2-n}(N;o(TN)) \stackrel{\wedge e}{\longrightarrow} H^2(N)
\end{equation}
where $e\in H^n(N;o(TN))$ is the Euler class of $N$ and $o(TN)$ is the orientation line bundle. The last map is conjugated via the Thom isomorphism to the map
\begin{equation}\label{e:pair}
H^2(D_r^g,\Sigma_r^g)\to H^2(D_r^g),\qquad \red{[x,y]\mapsto [x]}.
\end{equation}
in the long exact sequence of the pair $(D_r^g,\Sigma_r^g)$.

We first investigate when $(D_r^g,\omega)$ can be a Liouville domain. By Lemma \ref{Lemma Liouville condition} this is equivalent to requiring that $[\omega,\alpha_r]=0\in H^2(D_r^g,\Sigma_r^g)$. Thus, \red{the map \eqref{e:pair} shows that a necessary condition is to have $[\omega]=0$, which is equivalent to asking that $\sigma$ is an exact form}. In this case, we define
\begin{equation}\label{Equation r0 definition cotangent bundle}
r_0:=\inf_{d\beta=\sigma}\Vert\beta\Vert,\qquad \Vert\beta\Vert:=\max_{q\in N}|\beta_q|,
\end{equation}
where we run over all primitives $\beta$ for $\sigma$. Note that $r_0$ depends only on $g,\sigma$. For $r>r_0$ and any dimension $n=\dim N$, $D_{r}^g$ admits the deformation \red{through the Liouville domains $(D_r^g,d\theta-s\pi^*\sigma,(\theta-s\pi^*\beta)|_{\Sigma^g_r})$, where $\beta$ is a primitive of $\sigma$ such that $\Vert\beta\Vert<r$. For $s=0$ we get the standard domain $(D_r^g,d\theta,\theta|_{\Sigma_r^g})$}, and thus $SH^*(D_r^g,\omega)\cong SH^*(D_r^g,d\theta)$ (thus, it recovers the ordinary homology of the free loop space of $N$). \red{For $r\leq r_0$, we have two cases according to the dimension of $N$:
\begin{itemize}
\item If $\dim N\geq 3$ determining if $D_r^g$ is Liouville for $r\leq r_0$ is a hard question. We know that $D_r^g$ is not Liouville for $r\in (r_u,r_0]$ where $\tfrac12r_u^2$ is the Ma\~ n\' e critical value of the universal cover of $N$ \cite{CFP}, due to the existence of \red{null-homologous} magnetic geodesics with negative action \cite[Appendix B, Theorem B.1]{Con}. 
\item If $\dim N=2$, $D_r^g$ is not a Liouville domain for $r\leq r_0$, thanks to \cite{CMP}.
\end{itemize}
}
\red{Next, we assume that $(D_r^g,\omega)$ is a non-Liouville convex domain for some contact form $\alpha$ on $\Sigma^g_r$. This means that $[\omega,\alpha]\neq0$. We have two cases:
\begin{itemize}
\item If $\dim N\geq 3$, we claim that this can happen only if $\sigma$ is exact and hence $(D^g_r,\omega)$ would be Quasi-Liouville. To prove the claim, notice that if $n=\dim N\geq3$, then the last map in \eqref{e:gysin} vanishes. Hence, the map in \eqref{e:pair} vanishes as well. This implies that if $(D^g_r,\omega,\alpha)$ is convex, then $[\omega,\alpha]\in H^2(D^g_r,\Sigma^g_r)$ is sent to $[\omega]=0$, which is equivalent to $\sigma$ being exact. All known examples of Quasi-Liouville disc bundles for $\dim N\geq 3$ have $r>r_0$ and contact form at the boundary given by $\alpha:=(\theta-s\pi^*\beta)|_{\Sigma^g_r}+\epsilon\eta$ with $\Vert\beta\Vert<r_0$, $\eta$ a closed 1-form on $\Sigma^g_r$ such that $[\eta]$ is not in the image of the map $H^1(N)\cong H^1(D^g_r)\to H^1(\Sigma^g_r)$ (this ensures that $[\omega,\alpha]\neq0$) and $\epsilon$ small enough. In particular, sending $\epsilon$ to $0$, we see that these are deformations of the Liouville domains described above. It is an open problem to find examples such that $(D^g_r,\omega)$ is Quasi-Liouville for some $r\in(r_u,r_0]$.
\item The situation looks more promising when $\dim N=2$. We have two sub-cases:
\begin{itemize}
\item Assume in addition that $(D_r^g,\omega)$ is a Quasi-Liouville manifold (see Example \ref{ex:QL}). This means that $[\omega]=0$ (namely $\sigma$ is exact). This can happen only if the map in \eqref{e:pair} and hence the last map in \eqref{e:gysin} is not injective, since $[\omega,\alpha]\neq0$. In this case $N$ has to be the two-torus (recall that $H^{0}(N;o(TN))=0$ if $N$ is not orientable and that for orientable surfaces different from $\T^2$ the Euler class does not vanish). Contreras, Macarini and Paternain gave examples in \cite{CMP} of a pair $(g,\sigma)$ on the two-torus, discussed in the next subsection, for which $D_{r_0}^g$ is Quasi-Liouville.
\item Finally, consider the case in which $(D_r^g,\omega)$ is not Quasi-Liouville, meaning that $\omega$, or equivalently $\sigma$, is not exact. This forces $N$ to be an orientable surface. Moreover, $N$ is different from the two-torus since on the two-torus the map in \eqref{e:pair} vanishes as it is represented by multiplication by the Euler class in \eqref{e:gysin}. This yields $[\omega,\alpha]\mapsto[\omega]=0$ contrary to what we assumed. On the other hand, on orientable surfaces different from the two-torus there are examples of convex domains, which are not Liouville. These are discussed in Section \ref{ss:convexorientable}.
\end{itemize}
\end{itemize}
}We now simplify the notation: we will use the dilation $\delta_{r}(q,v):=(q,rv)$ to bring $(D_{r}^g,\Sigma_{r}^g)$ to $(D^g,\Sigma^g):=(D_1^g,\Sigma_1^g)$. The pull-back symplectic form on $D^g$ is 
\[
\delta_{r}^*\omega=r\omega_s,\qquad \omega_s:=d\theta-s\pi^*\sigma,\qquad s:=1/r.
\]
Therefore, we will consider below the symplectic manifold with boundary $(D^g,\omega_s)$ (as $r\omega_s$ and $\omega_s$ have the same Hamiltonian vector fields up to reparametrisation and the same almost complex structures). From Lemma \ref{Lemma Magnetic geodesic vector field}, we get
\[
X_{\rho}=X+s W\qquad \text{on}\ \ \Sigma^g.
\]
Its flow lines $(\gamma,\dot{\gamma})$ can either be interpreted as magnetic geodesics of $(g,s\sigma)$ with speed $1$ via \eqref{e:mg} or as magnetic geodesics of $(g,\sigma)$ with speed $\rho=1/s$ via \eqref{e:mg2}.
\subsection{Quasi-Liouville examples using $\T^2$: proof of Theorem \ref{t:cmp}} 
\label{Subsection An example of a Quasi-Liouville magnetic TT2} 
We will construct an exact $\sigma$ such that $\omega_s$ has a primitive $\alpha_{s}$ on $\Sigma$ and $(D^g,\omega_s,\alpha_{s})$ is a Quasi-Liouville domain. This is an explicit construction of the contact form for the kind of
systems considered in Contreras-Macarini-Paternain \cite[Section 5.1]{CMP}. 
\red{We first build an angular form $\psi\in T^*(T\T^2\setminus \T^2)$ as follows. Pick a global non-vanishing section $u$ of $\Sigma^g\to \T^2$. Then let $\varphi(q,v)$ denote the angle between $v\neq 0 \in T_q \T^2$ and $u(q)$. Finally, set $\psi:=d\varphi$. One can check that $\psi(V)=1$.}
Explicitly, using properties of the Levi-Civita connection, one can verify \cite[Lemma 2.4]{Benedetti} that $\psi(X)_{(q,v)}=-\nu_q(v)$ where $\nu\in\Omega^1(\T^2)$ is the curvature of the section $u$,
\[
\nu_q(v)=g_q(\nabla_v u,\jmath u).
\]
\red{We now construct $\sigma$.
First, we fix a simple contractible loop of period $T$,} 
\begin{equation}\label{Equation contractible gamma in torus}
\delta:[0,T]\to \T^2,
\end{equation}
\red{where $\delta$ is parametrised by arc-length.} Suppose that its geodesic curvature satisfies
\begin{equation}\label{Equation kappa gamma - nu}
\kappa_\delta-|\nu_{\delta}|>\varepsilon,
\end{equation}
for some $\varepsilon>0$. Then, choose a vector field $B$ on $\T^2$ such that
\begin{enumerate}[\itshape(i)]
 \item $\delta$ is an integral curve for $B$;
 \item $|B_{q}|\leq 1$ for all $q\in \T^2$, with equality precisely on the image of $\delta$.
\end{enumerate}

\red{From this vector field $B$ on $\T^2$ we obtain a one-form
\[
\beta=\flat B.
\]
Finally, define
\[
\sigma=d\beta.
\]
We will call the magnetic system $(D^g,\omega_s,\alpha_{s})$ on $\T^2$ a \textit{QL-magnetic-torus}.}

In this case, the free-period action functional $\mathbb S_r$ is obtained by integrating the Lagrangian function $L+\tfrac12r^2$, where
\[
L(q,v)=\tfrac{1}{2}|v|^2-\beta(v)=\tfrac{1}{2}|v-B|^2-\tfrac{1}{2}|B|^2.
\]
It follows that $L+\tfrac{1}{2}\geq 0$ with equality exactly for $(q,v)=(\delta,\dot\delta)$. Therefore, $\delta$ and its iterates represent the set of global minimizers for $\mathbb S_1$ on the set of contractible closed curves. In particular, $\delta$ is a closed magnetic geodesic with speed $1$.

\begin{lemma}
The value $r_0$ defined in \eqref{Equation r0 definition cotangent bundle} equals $1$ for $(g,\sigma)$ as above.
\end{lemma}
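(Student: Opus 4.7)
The plan is to prove the matching inequalities $r_0 \leq 1$ and $r_0 \geq 1$ separately. The upper bound is immediate: the form $\beta = \flat B$ is by construction a primitive of $\sigma$, and the musical isomorphism is a pointwise isometry, so $\Vert\beta\Vert = \max_{q\in\T^2}|B_q| = 1$ by property (ii). Plugging $\beta$ into the infimum in \eqref{Equation r0 definition cotangent bundle} therefore yields $r_0 \leq 1$.

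For the lower bound $r_0 \geq 1$ I would use the contractible closed curve $\delta$ as a test loop. Let $\beta'$ be any primitive of $\sigma$ on $\T^2$. Then $\beta - \beta'$ is a closed 1-form, and since $\delta$ is contractible by \eqref{Equation contractible gamma in torus}, Stokes' theorem applied to a filling disk gives
\[
\int_0^T (\beta-\beta')(\dot\delta)\,dt \;=\; 0.
\]
By property (i), $\delta$ is an integral curve of $B$ and is parametrised by arc length, so $B\circ\delta = \dot\delta$ and $|\dot\delta|\equiv 1$. Hence
\[
\beta(\dot\delta)\big|_t \;=\; g_{\delta(t)}\bigl(B(\delta(t)),\dot\delta(t)\bigr) \;=\; |\dot\delta(t)|^2 \;=\; 1,
\]
so combining the last two displays gives $\int_0^T \beta'(\dot\delta)\,dt = T$. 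The pointwise bound $|\beta'(\dot\delta)| \leq \Vert\beta'\Vert\cdot|\dot\delta| = \Vert\beta'\Vert$ and integration force $T \leq \Vert\beta'\Vert\cdot T$, i.e.\ $\Vert\beta'\Vert \geq 1$. Taking the infimum over $\beta'$ yields $r_0\geq 1$.

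There is no real obstacle here; the argument is simply the standard duality between sup-norms of primitives and line integrals over contractible loops realising the ``unit holonomy'' of $B$. The only delicate point to flag is that vanishing of $\int_\delta(\beta-\beta')$ relies essentially on $\delta$ being contractible (a closed 1-form need not integrate to zero over a non-contractible loop on $\T^2$); this is precisely the role of the contractibility hypothesis in \eqref{Equation contractible gamma in torus}. The inequalities $\kappa_\delta - |\nu_\delta|>\varepsilon$ and the minimising property of $\delta$ are not needed for this particular lemma — they will enter only in later arguments about the contact-type condition on $\Sigma^g$.
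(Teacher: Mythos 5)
Your proof is correct and follows essentially the same route as the paper's: the upper bound $r_0\leq 1$ comes from the explicit primitive $\beta=\flat B$, and the lower bound from comparing $\int_\delta\beta=T$ with $\int_\delta\beta'$ via Stokes' theorem over a disc filling the contractible loop $\delta$, then applying the pointwise bound $|\beta'(\dot\delta)|\leq\Vert\beta'\Vert$. The only cosmetic difference is that you phrase the Stokes step as $\int_\delta(\beta-\beta')=0$ whereas the paper writes $\int_\delta\beta=\int_{\hat\delta}\sigma=\int_\delta\beta'$; these are the same identity.
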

\begin{proof}
We have $r_0\leq \max |\beta|=1$. Let $\beta'$ be any primitive of $\sigma$, and $\hat\delta$ any disc bounding $\delta$. Then, since $\beta(\dot\delta)=\beta(B)=|B|^2=1$,
\[
T=\int_\delta\beta=\int_{\hat\delta}\sigma=\int_\delta\beta'\leq T\Vert\beta'\Vert \Vert\dot{\delta}\Vert= T\Vert\beta'\Vert. \qedhere
\]
\end{proof}

Define $\alpha_{s,a}\in \Omega^1(\Sigma^g)$ by
\[
\alpha_{s,a}=\big(\theta-s\pi^*\beta\big)|_{\Sigma^g}+a\psi.
\]
The relative class $[\omega_s,\alpha_{s,a}]\in H^2(D^g,\Sigma^g)$ is non-trivial for $a \neq 0$, as the form $a\psi$ is a non-exact closed 1-form on $\Sigma^g\cong\T^3$ which does not extend to $D^g$.
%
%
%
%

\begin{remark}
In the exact setup, it is possible to study geodesics in a closed manifold $N$ by applying Morse theory for appropriate Lagrangian functionals $L$ to the free loop space $\mathcal{L}N$, see \cite{AbbondandoloSchwarz}. This can also be carried out replacing $\theta$ by $\theta-\pi^*\beta$ if it is a contact form for the sphere bundle $($one then changes $L$ to $L-b$, where $b(q,v)=\beta_q(v))$. However, this fails to be a contact form in the case $N=\T^2$, and that trick does not apply to $\alpha_{s,a}$ because the non-trivial $a\psi$ term cannot be reabsorbed into $L$. 
%
%
\end{remark}

\begin{theorem}\label{Theorem Set A of r,a when is convex}
 The set $A=\{(s,a)\in [0,\infty)\times [0,\infty): \alpha_{s,a}(X+s W)>0\}$ is an open set such that $[0,1)\times\{0\}\subset A$ and the connected component $A_*$ of $(0,0)$ contains a non-empty interval $\{1\} \times (0,a_0)$. For any $(s,a)\in A_*$ in this connected component, $(D^g,\omega_s,\alpha_s:=\alpha_{s,a})$ is a convex domain which can be deformed to the standard $(D^g,d\theta,\theta|_{\Sigma})$ and
\[
SH^*_c(D^g,\omega_s,\alpha_s) \cong H_{2-*}(\mathcal{L}_c\T^2) \cong H_{2-*}(\T^2),
\]
where $c$ is any free homotopy class of loops in $\T^2$ and the latter isomorphism uses the homotopy equivalence $\mathcal{L}_c\T^2\to \T^2$, $\gamma\mapsto \gamma(0)$ 
$($whose fibres $\Omega_c\T^2$ are contractible$)$.
%
%
\end{theorem}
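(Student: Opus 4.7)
\textit{Proof plan.} My first step is to compute $\alpha_{s,a}(X+sW)$ explicitly on $\Sigma^g$. Using $\theta(X)_{(q,v)}=|v|^2=1$, the verticality of $W$ (so $\theta(W)=\pi^*\beta(W)=0$), $\pi^*\beta(X)_{(q,v)}=g_q(B_q,v)$, together with $\psi(X)_{(q,v)}=-\nu_q(v)$ and $\psi(W)_{(q,v)}=f(q)$ (from $W=fV$ and $\psi(V)=1$), one obtains
\[
\alpha_{s,a}(X+sW)\big|_{(q,v)} \;=\; 1 - s\,g_q(B_q,v) + a\bigl(-\nu_q(v) + s\,f(q)\bigr).
\]
Joint continuity in $(s,a,q,v)$ and compactness of $\Sigma^g$ give openness of $A$. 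For $s\in[0,1)$ and $a=0$, the assumption $|B_q|\le 1$ yields $1-sg_q(B_q,v)\geq 1-s>0$, proving $[0,1)\times\{0\}\subset A$.

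Next, I would examine the boundary behaviour at $s=1$. The locus where $1-g(B,v)$ vanishes is exactly $K=\{(\delta(t),\dot\delta(t))\}$, since $|B|$ attains $1$ only on $\mathrm{image}(\delta)$. Since $\delta$ is a closed magnetic geodesic for $(g,\sigma)$ of speed one, equation \eqref{e:kappaf} gives $\kappa_\delta=f(\delta)$, so on $K$ the correction satisfies
\[
-\nu_q(v)+f(q) \;=\; \kappa_\delta - \nu_\delta(\dot\delta) \;\geq\; \kappa_\delta - |\nu_\delta| \;>\; \varepsilon
\]
by hypothesis \eqref{Equation kappa gamma - nu}. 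Continuity and compactness then furnish an open neighbourhood $U$ of $K$ on which $-\nu+f>\varepsilon/2$, while $1-g(B,v)>c$ for some $c>0$ on $\Sigma^g\setminus U$. For $a_0>0$ small enough (depending on $c$ and $M:=\max|{-}\nu+f|$), both regions contribute positively to $\alpha_{1,a}(X+W)$, giving $\{1\}\times(0,a_0)\subset A$.

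To place these points in the same connected component as $(0,0)$, I would connect $(0,0)$ to $(1,a)$ by a piecewise-linear path through $(s_0,0)$ and $(s_0,a)$, with $s_0<1$ chosen so close to $1$ that $-\nu+sf>\varepsilon/2$ on $U$ for all $s\in[s_0,1]$. On the first two segments, $1-sg(B,v)\geq 1-s_0$ and the $a$-correction is at worst $-aM$, so positivity holds for $a<(1-s_0)/M$. On the closing segment $[s_0,1]\times\{a\}$ I would apply the same dichotomy: near $K$, the $a$-term carries positivity; away from $K$ one has $1-sg(B,v)\geq s_0(1-g(B,v))>s_0c$, which dominates $aM$ provided $a<s_0c/M$. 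This coordination of $a$ with the transition region where the leading term degenerates is the main technical delicacy.

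Finally, $d\alpha_{s,a}=\omega_s+a\,d\psi=\omega_s$ on $\Sigma^g$ because $\psi=d\varphi$ is closed, so Lemma~\ref{Lemma Trick to get convexity} shows that every point of $A_*$ yields a convex domain. Any path in $A_*$ from $(s,a)$ to $(0,0)$ is a deformation of convex domains ending at the standard $(D^g,d\theta,\theta|_\Sigma)$. Since $\sigma=d\beta$ is exact, $\omega_s=d(\theta-s\pi^*\beta)$ is globally exact throughout the deformation, so $\tau(\omega_s)\equiv 0\in H^1(\mathcal{L}M)$, and Corollary~\ref{t:invariance3} yields $SH^*_c(D^g,\omega_s,\alpha_s)\cong SH^*_c(D^g,d\theta,\theta|_\Sigma)$. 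The Viterbo isomorphism then identifies this with $H_{2-*}(\mathcal{L}_c\T^2)$ (the Stiefel--Whitney twist is trivial since $\T^2$ is parallelisable), and the evaluation fibration $\mathcal{L}_c\T^2\to\T^2$ has contractible fibre $\Omega_c\T^2$ because $\T^2=K(\Z^2,1)$ is aspherical, completing the identification with $H_{2-*}(\T^2)$.
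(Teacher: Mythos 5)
Your proposal is correct and follows essentially the same route as the paper: compute $\alpha_{s,a}(X+sW)$ explicitly, show $[0,1)\times\{0\}\subset A$ from $|B|\leq 1$, use the bounds $\kappa_\delta-|\nu_\delta|>\varepsilon$ and $f(\delta)=\kappa_\delta$ near the image of $\delta$ versus $1-s|\beta_q|$ bounded below away from it, apply Lemma~\ref{Lemma Trick to get convexity}, then Corollary~\ref{t:invariance3} (since $\omega_s$ is globally exact, the twist is trivial) and Viterbo. The only cosmetic differences are that the paper estimates $-\nu_q(v)\geq -|\nu_q|$ and works with a neighbourhood of $\mathrm{image}(\delta)$ in the base $\T^2$ rather than a neighbourhood of $K$ in $\Sigma^g$, and it leaves the path-connectedness to the implicit openness of $A$ rather than writing the piecewise-linear path explicitly; your version is a shade more detailed on the latter point, but there is no substantive difference in the argument.
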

\begin{proof}
We compute
\begin{equation}\label{ineq}
\begin{aligned}
\alpha_{s,a}(X+s W) &= 1-s \beta_q(v)+a\big(s f(q)-\nu_q(v)\big)
\\
& \geq 1-s|\beta_q|+a\big(s f(q)-|\nu_q|\big).
\end{aligned}
\end{equation} 
Thus $[0,1)\times\{0\}\subset A$. We now show that $(1-b_0,1+b_0)\times (0,a_0)\subset A$ for some small $a_0,b_0>0$. The right-hand side of \eqref{ineq} is the sum of:
\begin{enumerate}[\itshape (i)]
\item $a(s f(q)-|\nu_q|)$. This is larger than $a\varepsilon$ for $s=1$ and $q$ belonging to the image of $\delta$ by \eqref{Equation kappa gamma - nu} and the identity $f=\kappa_{\delta}$ in \eqref{e:kappaf}. Thus, $a(s f(q)-|\nu_q|)$ is larger than $\frac{1}{2}a\varepsilon$ on a neighbourhood $U$ of the image of $\delta$, if $|s-1|$ is small enough;
\item $1-s|\beta_q|$. This is strictly positive everywhere for $s<1$. For $s=1$ it only vanishes on the image of $\delta$. So $1-s|\beta_q|\geq\epsilon'>0$ on $\T^2\setminus U$, if $|s-1|$ is small.
\end{enumerate}
Thus, the sum is positive in $U$ if $a>0$, and it is positive on $\T^2\setminus U$ for $s\in (1-b_0,1+b_0)$ and $a<a_0$, where $b_0>0$ is sufficiently small and
\[
a_0:=\frac{\epsilon'}{\max\{0,c_0\}},\qquad c_0:=\sup_{q\notin U,\ s\in [1-b_0,1+b_0]}s f(q)-|\nu_q|
\]
So, $(1-b_0,1+b_0)\times (0,a_0)\subset A$. Thus, the sets $[0,1)\times\{0\}$ and $\{1\}\times (0,a_0)$ belong to the same path-connected component $A_*$. By Lemma \ref{Lemma Trick to get convexity}, $(D^g,\omega_s,\alpha_{s,a})$ is a convex domain for all $(s,a)\in A_*$. Therefore, the deformation in the claim arises from a path connecting $(s,a)$ to $(0,0)$ within $A_*$. Applying Corollary \ref{t:invariance3} and Viterbo's theorem \cite{Viterbo}, we deduce the isomorphisms in the statement.
\end{proof}

We will now use Theorem \ref{Theorem Set A of r,a when is convex} to infer existence results about magnetic geodesics. We clarify that $\rho$ is not the radial coordinate $R$ determined by $\Sigma^g$ for the convex domain $(D^g,\omega_s,\alpha_{s,a})$ and, more generally, $\rho$ is not a radial Hamiltonian. However, to prove our results we do not need to find $R$, it suffices to exploit the fact that chain level generators $x$ for $SH^*(D^g,\omega_s,\alpha_{s,a})$ at infinity correspond under projection to $\Sigma^g$ to closed Reeb orbits, which in turn correspond to closed magnetic geodesics $\gamma$ of speed $1$.

Observe that after a time-dependent perturbation of a radial Hamiltonian $h$, the Floer chain complex $CF^*(h)$ (where we suppress $D^g,\omega_s,\alpha_{s,a}$ from the notation) is generated by elements that can be labeled $x^k_-$ and $x^k_+$, where $x^k$ is the $k$-th iterate of a prime magnetic geodesic in $\Sigma^g$ for $k\in \N$ (the labeling uses the above comments about projection to $\Sigma^g$). Following Appendix \ref{Appendix1}, if $x$ has transverse Conley-Zehnder index $\bar{\mu}(x)$, then $x_-$ and $x_+$ have degrees $|x_-|=1-\bar{\mu}(x)$ and $|x_+|=2-\bar{\mu}(x)$, respectively (using that $n=\dim_{\C}D^g=2$).
\begin{lemma}\label{l:duist}
The transverse Conley-Zehnder index of $x$ equals the Morse index of the corresponding magnetic geodesic $\gamma$ for the free-period action functional $\mathbb S_1$.
\end{lemma}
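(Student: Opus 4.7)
The strategy is to follow the standard Duistermaat-type identification between the transverse Conley-Zehnder index of a closed Reeb orbit and the Morse index of the corresponding critical point of a Lagrangian free-period action functional. For geodesics this goes back to Duistermaat and was extended to the cotangent-bundle Floer-theoretic setting by Viterbo and Abbondandolo-Schwarz; the magnetic version that we need was worked out by Abbondandolo-Mazzucchelli, Merry, and in \cite{Benedetti}. Our two-dimensional situation is a direct instance of that machinery, so I will only sketch the three essential steps.

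\emph{Step 1 (Linearised Reeb flow).} I would first linearise the Reeb flow of $\alpha_{s,a}$ along the Reeb orbit $x$ corresponding to $\gamma$. Using the frame $(V,H)$ of Section \ref{Subsection Basic notation for twisted cotangent bundles} one obtains a symplectic trivialisation of the contact distribution $\xi=\ker\alpha_{s,a}$ along $x$, after quotienting by the Reeb direction. The bracket relations \eqref{e:bracket}, together with $\mathbb{Y}(v)=f\jmath v$, show that in this trivialisation the linearised Reeb flow is conjugate to the fundamental solution of the normal magnetic Jacobi equation along $\gamma$, namely the second-order linear ODE obtained by linearising \eqref{e:mg2} in the direction $\jmath\dot\gamma$.

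\emph{Step 2 (Hessian of $\mathbb{S}_1$).} I would then compute the Hessian of $\mathbb S_1$ at $\gamma$, decomposing a tangent vector as $(f\dot\gamma + y\,\jmath\dot\gamma,\,\tau)$ with $\tau\in\R$ the period variation. A standard computation (in the spirit of \cite{Benedetti}) shows that the pair $(f,\tau)$ spans a subspace on which the Hessian is non-negative and non-degenerate at a transversally non-degenerate orbit, while on the normal component $y\in W^{1,2}(S^1,\R)$ the Hessian is precisely the self-adjoint operator associated to the Jacobi equation of Step 1. Hence the Morse index of $\gamma$ for $\mathbb S_1$ equals the spectral flow of this normal Hessian.

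\emph{Step 3 (Morse-Maslov equality).} Finally, the Robbin-Salamon theorem identifies the spectral flow of a one-parameter family of self-adjoint operators arising from a symplectic linear ODE with the Maslov index of the corresponding path of symplectic matrices. Applied to the output of Steps 1 and 2, this equates the Morse index of $\gamma$ for $\mathbb S_1$ with the transverse Conley-Zehnder index $\bar\mu(x)$, as required. I expect the main obstacle to be Step 1: one must carefully relate the Reeb vector field of $\alpha_{s,a}$ to the Hamiltonian vector field $X+sW$ (they differ by a positive rescaling on $\Sigma^g$, harmless for the index), and account for the contribution of the $a\psi$-term in $\alpha_{s,a}$, which perturbs the linearised Reeb flow by lower-order terms that do not alter the Maslov-type index. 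The bookkeeping of the period direction in Step 2, ensuring that one obtains the transverse index and not a $\pm 1$ shift thereof, is the other place where one must be attentive to sign conventions.
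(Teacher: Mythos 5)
Your strategy is genuinely different from the paper's. You propose the direct Duistermaat/Weber route: linearize the Reeb flow, identify it with a magnetic Jacobi equation, compute the Hessian of $\mathbb{S}_1$, and invoke Robbin--Salamon to match spectral flow with the Maslov index. The paper instead uses a homotopy argument: along a path in $A_*$ joining $(s,a)$ to $(0,0)$ it constructs a continuous family of Lagrangian distributions $L_{s,a}$ spanned by the Liouville field $Y_{s,a}$ and a section $V_{s,a}\in\ker\alpha_{s,a}$, with $L_{0,0}=T^{\mathrm{vert}}(T\T^2)$. The relative Maslov index of $L_{s,a}$ against $L_{0,0}$ therefore vanishes, so the full Conley--Zehnder index is the same whether computed with the contact-adapted reference or the vertical polarization; the classical Duistermaat theorem (and the fixed-/free-period comparison under $h''>0$) is then invoked only in the vertical picture. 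This avoids all Jacobi-equation bookkeeping by reducing to the untwisted case at $(0,0)$.

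The gap in your sketch is precisely where you flag uncertainty, and it is more serious than you suggest. For $a>0$ the contact distribution $\xi=\ker\alpha_{s,a}$ has itself changed: $\alpha_{s,a}(V)=a\ne 0$, so $V\notin\xi$ and $(V,H)$ is not a frame for $\xi$. You therefore cannot simply linearize the Reeb flow ``in the $(V,H)$ trivialization''; you must project along the Reeb direction, and then you face the real issue: showing that the resulting trivialization of $\xi$ both is compatible with the canonical-bundle trivialization that defines $\bar\mu$ and yields the same Maslov index as the vertical polarization against which Duistermaat's theorem is stated. The assertion that the $a\psi$-term ``perturbs the linearised Reeb flow by lower-order terms that do not alter the Maslov-type index'' is not a perturbative fact about the ODE but exactly the nontrivial comparison that the paper's Lagrangian-homotopy argument supplies. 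Your direct route would, after the dust settles, have to re-derive that comparison rather than bypass it, so Step~1 as written does not close. The remainder of your outline (the Hessian decomposition, the Robbin--Salamon spectral-flow identity, and the $h''>0$ reconciliation of fixed- and free-period indices) is sound in principle.
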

\begin{proof}
Let $(s_0,a_0)$ be a pair in $A_*$ and consider a path $(s,a)$ in $A_*$ joining $(s_0,a_0)$ to $(0,0)$. Let $Y_{s,a}$ be the Liouville vector field of $(D^g,\omega_{s},\alpha_{s,a})$ defined at $\Sigma^g$, so that $Y_{0,0}=Y$. Let $V_{s,a}$ be a nowhere vanishing vector field contained in $\ker \alpha_{s,a}$ such that $V_{0,0}=V$. It is not difficult to see that $Y_{s,a}$ and $V_{s,a}$ can be chosen to depend continuously on $(s,a)$. Let $L_{s,a}$ be the Lagrangian distribution for $\omega_s$ generated by $Y_{s,a}$ and $V_{s,a}$ and observe that $L_{0,0}=T^{\mathrm{vert}}(T\T^2)$ is also a Lagrangian distribution for $\omega_s$. It follows that the relative Maslov index of $L_{s,a}$ with respect to $L_{0,0}$ vanishes. Since $h''>0$ we have that the full Conley-Zehnder index $\mu(x)$ computed with respect to the distribution $L_{s,a}$ is equal to $\mu(x)=\bar\mu(x)+\frac{1}{2}$. Since the relative Maslov index vanishes, $\mu(x)$ is also the Conley-Zehnder index computed with respect to the vertical distribution $L_{0,0}$. By a classical result of Duistermaat \cite{Duistermaat,Weber}, $\mu(x)-\frac{1}{2}$ is the Morse index of $\gamma$ for the \textit{fixed}-period action functional, and as $h''>0$ this is equal to the Morse index $m(\gamma)$ for the \textit{free}-period action functional, so $m(\gamma)=\bar{\mu}(x)$ (see Merry-Paternain \cite{Merry-Paternain} for details).
\end{proof}
\begin{corollary}\label{c:duist}
For all closed Reeb orbits $x$, we have $\bar{\mu}(x)\geq 0$, and thus $|x_-|\leq 1$ and $|x_+|\leq 2$. If the corresponding $\gamma$ is a local minimizer of the free-period action functional $\mathbb S_1$, then $\bar{\mu}(x)=0$.
\end{corollary}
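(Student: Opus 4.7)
The plan is to deduce everything directly from the identification $\bar{\mu}(x) = m(\gamma)$ established in Lemma \ref{l:duist}, where $m(\gamma)$ denotes the Morse index of $\gamma$ as a critical point of the free-period action functional $\mathbb S_1$.

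First, I would observe that the Morse index of any critical point of a smooth functional on a Hilbert/Banach manifold is by definition the dimension of a maximal subspace on which the Hessian is negative definite; in particular $m(\gamma) \geq 0$. Combining this with Lemma \ref{l:duist} gives $\bar{\mu}(x) \geq 0$ for every closed Reeb orbit $x$.

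Second, I would substitute this inequality into the degree formulas stated just above Lemma \ref{l:duist}, namely $|x_-| = 1 - \bar{\mu}(x)$ and $|x_+| = 2 - \bar{\mu}(x)$ (which come from Appendix \ref{Appendix1} together with $n = \dim_{\C} D^g = 2$). These immediately yield the bounds $|x_-| \leq 1$ and $|x_+| \leq 2$.

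Finally, for the last sentence, suppose $\gamma$ is a local minimizer of $\mathbb S_1$. Then by definition the Hessian of $\mathbb S_1$ at $\gamma$ is positive semi-definite, so the Morse index vanishes: $m(\gamma) = 0$. Invoking Lemma \ref{l:duist} once more, $\bar{\mu}(x) = m(\gamma) = 0$. There is no real obstacle here; the whole statement is an immediate consequence of Lemma \ref{l:duist} together with the non-negativity of the Morse index, which is why the authors packaged it as a corollary.
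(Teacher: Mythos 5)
Your proof is correct and is precisely the argument the paper intends (the paper omits a proof, presenting the statement as an immediate corollary of Lemma \ref{l:duist}); the non-negativity of the Morse index and its vanishing at a local minimizer, combined with the degree formulas $|x_\pm|$, give the claim directly.
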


\begin{corollary}\label{Corollary magnetic case on T2}
\red{The following hold for every \textit{QL-magnetic-torus} $(D^g,\omega_s,\alpha_{s})$:}
\begin{enumerate}
\item\label{Item T2 geod} There exists at least one periodic magnetic geodesic with speed $1$ in every non-trivial free homotopy class;
\item If the magnetic geodesic in \eqref{Item T2 geod} is non-degenerate, then there exists at least $2$ such magnetic geodesics;
\item \red{If the contractible periodic magnetic geodesics of unit speed are non-degenerate, then there are infinitely many contractible magnetic geodesics of Morse index one.}
%
\end{enumerate}
\end{corollary}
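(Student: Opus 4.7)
The plan is to use Theorem \ref{Theorem Set A of r,a when is convex}, which gives
\[
SH^*_c(D^g,\omega_s,\alpha_s)\cong H_{2-*}(\mathcal{L}_c\T^2)\cong H_{2-*}(\T^2),\qquad \dim_\Lambda SH^*_c=4,
\]
for every class $c\in[S^1,\T^2]=\Z^2$, together with a chain-level analysis of $CF^*_c$ at large slope. Choosing a cofinal family of admissible radial Hamiltonians and a small one-periodic perturbation, the generators of $CF^*_c$ consist of the Morse critical points of $\T^2$ (present only when $c=0$) together with pairs $x^j_\pm$ attached to iterates $\gamma^j$ of prime periodic magnetic geodesics of speed $1$ with $j[\gamma]=c$. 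Corollary \ref{c:duist} gives $|x^j_-|=1-\bar\mu(x^j)$, $|x^j_+|=2-\bar\mu(x^j)$, and $\bar\mu(x^j)\geq 0$.

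For (1), fix $c\neq 0$: the Morse contribution is absent, and $SH^*_c\neq 0$ forces at least one pair $x^j_\pm$ to appear, whose underlying $\gamma^j$ is a periodic magnetic geodesic with image class $c$. For (2), under the non-degeneracy hypothesis each periodic orbit in class $c$ (whether prime or iterate) contributes exactly two chain generators; combining with $\dim SH^*_c\leq \dim CF^*_c$ and $\dim SH^*_c=4$ yields at least two periodic magnetic geodesics in $c$.

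For (3), take $c=0$. The Morse complex coming from the constants of $\T^2$ already accounts for $\dim SH^*_0=4$, so every non-constant Floer generator must be cancelled in cohomology. Suppose for contradiction that only finitely many contractible Morse-index-$1$ orbits exist. The non-degeneracy hypothesis together with the iteration formulae of Appendix \ref{Appendix1} implies that for contractible prime orbits infinitely many iterates are good and have $\bar\mu\in\{0\}\cup\{2,3,\dots\}$. For each such good iterate, Zhao's relation $b=k$ in \eqref{e:ab} gives $\Delta x^k_+=k\,x^k_-+(\text{lower-}R)$; combined with $\partial\Delta+\Delta\partial=0$ (see \eqref{Equation BV properties}) and induction on the radial filtration of Section \ref{Subsection period}, this propagates these pairs into an infinite family of non-cancellable classes in $SH^*_0$, unless infinitely many Morse-index-$1$ generators supply the required $\partial$-preimages — contradicting both the finite-dimensionality of $SH^*_0$ and the finiteness assumption.

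The principal obstacle is the bookkeeping in part (3): one must carry out the filtered analysis via $\partial\Delta+\Delta\partial=0$ level by level on the radial filtration, while keeping careful track of good versus bad iterates through the transverse Conley–Zehnder parity under iteration, and respecting the degree constraint $|x^k_\pm|\in\{0,1,2\}$ imposed by the concentration of $SH^*_0$, in order to match the chains produced by Zhao's relation \eqref{e:deltaDelta}--\eqref{e:ab} with Morse-index-$1$ generators cancelling them.
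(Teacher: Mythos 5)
Parts (1) and (2) follow the paper's argument and are correct; for (2) you should make explicit the remark, used tacitly, that iterates of a prime orbit land in different free-homotopy classes, so a single geodesic in class $c\neq 0$ yields at most two generators of $CF^*_c$, which is then less than $\operatorname{rank}SH^*_c=4$.

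Part (3) names the right tools — Zhao's relation \eqref{e:ab}, the identity $\partial\Delta+\Delta\partial=0$, and the radial filtration of Section \ref{Subsection period} — but ``propagates these pairs into an infinite family of non-cancellable classes'' is a wish, not an argument, and what you call bookkeeping is actually where the proof lives. Two concrete ingredients are missing. First, you never isolate the orbit $\delta$ from the QL-magnetic-torus construction: $\delta$ and every iterate $\delta^k$ are local minimizers of $\mathbb{S}_1$, so $\bar\mu(\delta^k)=0$ for all $k$ by Corollary \ref{c:duist}; hence $\delta^k_+$ has degree $2$ for every $k$ and $\dim C_2\to\infty$ with the slope. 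Without this specific orbit you cannot conclude that $C_2$ grows at all, and your statement that ``infinitely many iterates are good with $\bar\mu\in\{0\}\cup\{2,3,\dots\}$'' does not by itself force anything in a fixed degree. Second, the decisive consequence of Zhao's relation plus the radial filtration is the \emph{injectivity of} $\Delta_2:C_2\to C_1$: for the top-period summand $x_+$ of any nonzero $w\in C_2$ one has $\langle\Delta x_+,x_-\rangle\neq 0$ because such $x$ is necessarily good. Since $\Delta$ is a chain map and $\partial_2=0$ (there are no degree-$3$ generators by Corollary \ref{c:duist}), $\operatorname{Im}\Delta_2\subset\ker\partial_1$, whence $\dim\operatorname{Im}\partial_1\leq\dim C_1-\dim C_2$, and the finiteness hypothesis bounds this by a constant independent of the slope. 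You also need the monotone-continuation sub-complex argument to see that $\operatorname{Im}\partial_1$ stabilizes as the slope increases before passing to the direct limit. Only with all three pieces does one get $\dim SH^2_0=\infty$, contradicting Theorem \ref{Theorem Set A of r,a when is convex}. As written, your proof of (3) is a plausible outline with a genuine gap.
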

\begin{proof}
(1) For free homotopy classes $c\neq 0$, if there are no such geodesics then $SH^*_c(D^g,\omega_1,\alpha_1)=0$, contradicting Theorem \ref{Theorem Set A of r,a when is convex}.

(2) If there is only one non-degenerate geodesic in the class $c\neq 0$ in \eqref{Item T2 geod}, then $2\geq \mathrm{rank}\, SH^*_c(D^g,\omega_1,\alpha_1)$ (recall that each such geodesic contributes two generators to the chain complex after time-perturbation, and we remark that iterates lie in different free homotopy classes).
This contradicts Theorem \ref{Theorem Set A of r,a when is convex} (we expect rank $4$).

(3) Suppose by contradiction that there are only finitely many prime magnetic geodesics with index $1$. By the iteration formula \eqref{Equation iteration formula1}, if $\bar{\mu}(x)=1$, then $\bar{\mu}(x^k)$ eventually grows for large $k$. So there is a minimal Reeb period $T>0$ such that all prime and non-prime magnetic geodesics with index $1$ have Reeb period $\leq T$ and we denote by \red{$A$} the number of such orbits. \\ 
\textit{Sub-claim}: $\Delta:CF^2(h) \to CF^1(h)$ is injective.\\
\textit{Proof:} Let $0\neq w\in CF^2(h)$, we want $\Delta w\neq 0$.
Let $x_+\in CF^2(h)$ be a generator appearing in $w$ with maximal Reeb period. 
After rescaling if necessary, we may assume $w=w'+x_+$ where $w'$ does not involve $x_+$. From \eqref{e:deltaDelta}, it follows that $\langle \Delta w',x_-\rangle=0$.
Thus $\langle \Delta w,x_-\rangle=\langle \Delta x_+,x_-\rangle\neq 0$ by \eqref{e:deltaDelta} and \eqref{e:ab} ($x$ is a good orbit: if $x=x_*^k$ for a prime Reeb orbit $x_*$, then \eqref{Equation iteration formula1} implies $\bar{\mu}(x_*)=0$ since $\bar{\mu}(x)=0$, so the $\bar{\mu}$-values of $x,x_*$ have the same parity). So $\Delta w\neq 0$. $\checkmark$

\red{We take a Hamiltonian function $h$ on the symplectisation of $D^g$ so that \begin{itemize}
\item its Morse complex has generators in degrees $2,1,1,0$, computing $H^*(\T^2)$,
\item the slope $\tau$ of $h$ at infinity is bigger than $T$, so that all generators of the Floer chain complexes involve Reeb periods $\leq \tau$, as will their images under $\partial$ and $\Delta$ (see \ref{Subsection period}).
\end{itemize}}
We use the abbreviation $C_d = CF^d_0(h)$ (notice we restricted to contractible orbits), $\partial_d=\partial|_{C_d}: C_d \to C_{d+1}$ and $\Delta_d = \Delta|_{C_d}:C_d \to C_{d-1}$. \red{There are no generators in degree $3$ by Corollary \ref{c:duist}. In particular, $\partial _2=0$. Observe that for each geodesic $x_0$ with $\bar{\mu}(x_0)=0$ we have $|x_{0-}|=1$, $|x_{0+}|=2$, and for each geodesic $x_1$ with $\bar{\mu}(x_1)=1$ we have $|x_{1-}|=0$, $|x_{1+}|=1$. Since $h$ has two Morse generators in degree $1$ and one in degree $2$, we see that $c(\tau):=\dim C_1 - \dim C_2-1$ equals the number of geodesics $x_1$ with $\bar{\mu}$-index $1$ and period less than $\tau$, since the counts of the $x_{0\pm}$ cancel out in $c(\tau)$. Therefore, by assumption $c(\tau)\leq A$ is bounded independently of $\tau$. Since $\Delta_2$ is injective by the sub-claim, the dimension of $\coker\Delta_2$ is equal to $\dim C_1 - \dim C_2=c(\tau)+1$ and, hence, is bounded by $A+1$. Since $\Delta$ is a chain map (see \ref{Equation BV properties}) and $\partial_2=0$, the map $\Delta_2$ sends $C_2$ into $\ker\partial_1$, namely $\mathrm{Im}\,\Delta_2\subset \ker\partial_1$. Therefore, we obtain $\dim\mathrm{Im}\,\partial_1= \dim C_1-\dim\ker\partial_1 \leq \dim\coker\Delta_2\leq A+1$.}

When we increase the slope $\tau$, we modify $h$ to $h_1$ by only increasing $h$ in the region at infinity where $h'=\tau$. By the maximum principle, this implies that the continuation map $CF^*(h)\to CF^*(h_1)$ for the linear interpolation will be an inclusion of a sub-complex (non-constant continuation solutions lying in the region where $h=h_1$ cannot be rigid as they would admit an $\R$-reparametrization action).
Thus $\mathrm{Im}\,\partial_1$ computed for $CF^*(h)$ is contained in the $\mathrm{Im}\,\partial_1$ computed for $CF^*(h_1)$. Using such Hamiltonians, it follows from the bound \red{$\dim\mathrm{Im}\, \partial_1\leq A+1$} that $\mathrm{Im}\,\partial_1$ eventually stabilises as a vector subspace, independently of $\tau$. Finally, observe that $\dim C_2\to \infty$ as $\tau\to \infty$, because if $y$ is the closed Reeb orbit corresponding to $\delta$, then all the iterates $y_+^k$ have degree $2$, as $\bar{\mu}(y^k)=0$ by Corollary \ref{c:duist}.
Since $\partial_2=0$, it now follows that $SH^2_0(D^g,\omega_1,\alpha_1)$ is infinite dimensional, contradicting Theorem \ref{Theorem Set A of r,a when is convex}.
\end{proof}

\begin{remark}[Alternative Proof]\label{Remark Equivariant SH proof}
Corollary \ref{Corollary magnetic case on T2}.(3) can also be proved using the more elaborated machinery of $S^1$-equivariant symplectic cohomology $ESH^*(D^g,\omega_1,\alpha_1)$ $($we use the conventions from \cite{McLean-Ritter}$)$. Using the Morse-Bott spectral sequence from McLean-Ritter \cite[Cor.7.2]{McLean-Ritter}, aside from the Morse complex of $h$, the $E_1$-page has generators labeled by the unperturbed magnetic geodesics with grading $2-\bar{\mu}$.
There are infinitely many orbits $\delta^k$ in degree $2$, which are cycles as there are no generators in degree $3$. The Morse-Bott spectral sequence converges in degree $2$ to the finite dimensional group $ESH_0^2(D^g,\omega_1,\alpha_1)$, so for dimension reasons there cannot be only finitely many orbits in degree $1$. Here we used that the analogue of Theorem \ref{Theorem Set A of r,a when is convex} yields $ESH^*_c(D^g,\omega_s,\alpha_s)\cong ESH^*_c(D^g,\omega_0,\alpha_0)\cong H_{2-*}^{S^1}(\mathcal{L}_c \T^2)$ where the latter is the $S^1$-equivariant Viterbo theorem \cite{Viterbo}, and we used that $\dim H_{0}^{S^1}(\mathcal{L}_c \T^2)=1<\infty$.
%
%
\end{remark}

\subsection{Convex domains for $N\neq \T^2$: proof of Theorem \ref{t:nonexact}}\label{ss:convexorientable}

Let $N=S^2$ or a surface of genus\;$\geq 2$. We now work with non-exact magnetic forms. Define
\begin{equation}\label{e:nor}
\mathcal N:=\Big\{(g,\sigma)\ \Big|\ \int_N\sigma=2\pi\chi(N)\Big\}.
\end{equation}
This is not restrictive, since, up to changing orientation of $N$, and rescaling $\sigma$ to $c\sigma$ and $s$ to $s/c$, we can assume that the normalisation above holds. By the Gauss-Bonnet theorem, the form $\sigma':=\sigma-K\mu$ is exact, and for every primitive $\beta$  we get a primitive $\theta_{s,\beta}$ of $\omega_s$ outside of the zero section:
\begin{equation}\label{e:theta}
\theta_{s,\beta}:=\theta-s\pi^*\beta+s\tau,
\end{equation}
where $\tau$ is the $S^1$-connection form. We let
\[
\alpha_{s,\beta}:=\theta_{s,\beta}|_{\Sigma^g}.  
\]
\red{Recall the definition of the function $f:N\to\R$ as the density of $\sigma$ with respect to the area form $\mu$: $\sigma=f\mu$.} We define for every $(g,\sigma)\in\mathcal N$,
\[
s_-(g,\sigma)=\sup_{d\beta=\sigma'}\Big\{ s_*\geq 0 \ \Big|\ \red{\forall\, s\in[0,s_*]},\ \ 1-\Vert\beta\Vert s+(\min f)s^2>0\Big\}.
\]
More explicitly, let $s_-(g,\sigma,\beta)$ be the smallest positive real root of the polynomial $1- \Vert\beta\Vert x + (\min f)x^2$ if it exists, and otherwise let $s_-(g,\sigma,\beta)=+\infty$. Then,
\[
s_-(g,\sigma)=\sup_{d\beta=\sigma'} s_-(g,\sigma,\beta).
\]
Finally, let $A$ be the set of triples $(g,\sigma,s)$ such that $(g,\sigma)\in\mathcal N$ and $s<s_-(g,\sigma)$.
\begin{lemma}
The set $A$ is connected, and $(D^g,\omega_s,\alpha_{s,\beta})$ is a convex domain
for any $(g,\sigma,s)\in A$, where $\beta$ is any primitive of $\sigma'$ with $s<s_-(g,\sigma,\beta)$.
\end{lemma}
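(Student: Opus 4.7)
The plan is to prove the two assertions independently. For the convexity claim, I will apply the criterion of Lemma \ref{Lemma Trick to get convexity} with the Hamiltonian $h=\tfrac12\rho^2$, whose regular level set is $\Sigma^g$. Since $\theta_{s,\beta}$ is by construction a primitive of $\omega_s$ on $TN\setminus N$, we automatically have $d\alpha_{s,\beta}=\omega_s|_{T\Sigma^g}$, so it remains to verify the positivity condition $\alpha_{s,\beta}(X_h)>0$. By Lemma \ref{Lemma Magnetic geodesic vector field}, $X_h$ is a positive scalar multiple of $X_\rho=X+sW$ on $\Sigma^g$, so this reduces to showing $\alpha_{s,\beta}(X+sW)>0$.

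To compute this, I will expand the three pieces of $\theta_{s,\beta}=\theta-s\pi^*\beta+s\tau$ on $X+sW$ using the identities of Section \ref{Subsection Basic notation for twisted cotangent bundles}: on $\Sigma^g$ one has $\theta(X)_{(q,v)}=|v|^2=1$ and $\pi^*\beta(X)_{(q,v)}=\beta_q(v)$; both $\theta$ and $\pi^*\beta$ annihilate the vertical vector $W$; $\tau$ annihilates the horizontal vector $X$; and $W=(f\circ\pi)V$ together with $\tau(V)=1$ gives $\tau(W)=f\circ\pi$. Collecting these terms yields
\[
\alpha_{s,\beta}(X+sW)=1-s\beta_q(v)+s^2 f(q)\;\geq\; 1-s\Vert\beta\Vert+s^2\min f,
\]
and the right-hand side is strictly positive by the definition of $s_-(g,\sigma,\beta)$ whenever $s<s_-(g,\sigma,\beta)$, proving convexity.

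For connectedness of $A$, the key observation is that $s_-(g,\sigma)>0$ for every $(g,\sigma)\in\mathcal N$: the polynomial $1-\Vert\beta\Vert x+(\min f)x^2$ equals $1$ at $x=0$, so it admits some positive interval of positivity for any primitive $\beta$. Consequently the straight-line segment $t\mapsto(g,\sigma,(1-t)s)$ connects any $(g,\sigma,s)\in A$ to the slice $\mathcal N\times\{0\}\subset A$. It then suffices to show $\mathcal N$ is path-connected, which follows at once from the convexity of the space of Riemannian metrics on $N$ and from the fact that the set of closed $2$-forms $\sigma$ with $\int_N\sigma=2\pi\chi(N)$ is an affine subspace of $\Omega^2(N)$, so linear interpolation produces paths inside $\mathcal N$.

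I do not anticipate serious obstacles: the only step with any moving parts is the computation of $\alpha_{s,\beta}(X+sW)$, which reduces to unpacking the formulas for the horizontal/vertical splitting recalled earlier. The only mild point to keep track of is the sign and scaling conventions ensuring that $X_\rho=X+sW$ (rather than $\tfrac1\rho(X+sW)$, which it is off $\Sigma^g$), but this is exactly the normalisation already used in Section \ref{Subsection Convexity for twisted cotangent bundles}.
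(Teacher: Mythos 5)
Your proof is correct and follows essentially the same route as the paper's: the key computation $\alpha_{s,\beta}(X+sW)=1-s\beta_q(v)+s^2 f(q)$ is identical, and the connectedness argument (retract along the $s$-direction to a small $s$, resp.\ $s=0$, then use path-connectedness of $\mathcal N$) is the same idea phrased slightly more explicitly. The additional remarks you include — applying Lemma \ref{Lemma Trick to get convexity} via $h=\tfrac12\rho^2$ and unpacking each term of $\theta_{s,\beta}$ against $X$ and $W$ — are just the details the paper leaves to the reader.
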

\begin{proof}
To see that $A$ is connected, we just observe that if we have an interpolation $(g_u,\sigma_u)$ with $u\in[0,1]$, then we can take a small $s$ such that $1-\Vert \beta_u\Vert_us +(\min f_u)s^2$ is positive for all $u$. To prove that $\alpha_{s,\beta}$ is a positive contact form, we use Lemma \ref{Lemma Trick to get convexity}. Indeed, the formulae in Section \ref{Subsection Basic notation for twisted cotangent bundles} yield for $(q,v)\in\Sigma^g$
\[
	\alpha_{\beta,s}(X+s W)_{(q,v)}=1-\beta_q(v)s+s^2f(q)\geq 1-\Vert \beta\Vert s+s^2\min_{q\in N} f(q). \qedhere
	\]
\end{proof}
When $N=S^2$, we can prove that $(D^g,\omega_s,\alpha_{s,\beta})$ is also convex when $\sigma$ is symplectic and $s$ is large enough. Indeed, let $\mathcal{N}^+\subset \mathcal{N}$ be the subset of those $(g,\sigma)$ for which $\sigma$ is symplectic, equivalently $f>0$. Let 
\[
s_+(g,\sigma)=\inf_{d\beta=\sigma'}\Big\{ s_*\geq 0 \ \Big|\ \forall\, s\geq s_*,\ \ 1-\Vert\beta\Vert s+(\min f)s^2>0\Big\}.
\]
More explicitly, let $s_+(g,\sigma,\beta)$ be the largest positive real root of the polynomial $1- \Vert\beta\Vert x + (\min f)x^2$ if it exists, and otherwise let $s_+(g,\sigma,\beta)=0$. Then,
\[
s_+(g,\sigma)=\inf_{d\beta=\sigma'} s_+(g,\sigma,\beta).
\]
Finally, let $A^+$ be the set of triples $(g,\sigma,s)$ such that $(g,\sigma)\in\mathcal N^+$ and $s>s_+(g,\sigma)$.
\begin{theorem}
Let $N=S^2$. The set $A^+$ is connected, and $(D^g,\omega_s,\alpha_{s,\beta})$ is a convex domain for all $(g,\sigma,s)\in A^+$, where $\beta$ is any primitive of $\sigma'$ with $s>s_+(g,\sigma,\beta)$.\hfill\qed 
\end{theorem}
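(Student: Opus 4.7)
The convexity statement reduces to the calculation already performed in the preceding lemma together with Lemma \ref{Lemma Trick to get convexity}. For $(g,\sigma,s) \in A^+$, the definition $s > s_+(g,\sigma) = \inf_{d\beta=\sigma'} s_+(g,\sigma,\beta)$ supplies a primitive $\beta$ of $\sigma'$ with $s > s_+(g,\sigma,\beta)$. Since $(g,\sigma) \in \mathcal{N}^+$ forces $\min f > 0$, the polynomial $p(x) = 1 - \Vert\beta\Vert x + (\min f) x^2$ opens upward with $p(0) = 1$; if $p$ has real roots, the larger of them is by definition $s_+(g,\sigma,\beta)$ and $p(s) > 0$ for all $s$ beyond it, while if $p$ has no real roots then $s_+(g,\sigma,\beta) = 0$ and $p$ is positive everywhere. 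Either way,
\[
\alpha_{s,\beta}(X + sW)_{(q,v)} = 1 - \beta_q(v) s + s^2 f(q) \ \geq \ 1 - \Vert\beta\Vert s + s^2 \min f \ > \ 0,
\]
so Lemma \ref{Lemma Trick to get convexity} yields that $(D^g,\omega_s,\alpha_{s,\beta})$ is a convex domain.

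For connectedness of $A^+$ I first observe that $\mathcal{N}^+$ is itself path-connected via linear interpolation. Given $(g_i,\sigma_i) \in \mathcal{N}^+$ for $i = 0,1$, the family $g_u = (1-u) g_0 + u g_1$ remains a Riemannian metric, and $\sigma_u = (1-u) \sigma_0 + u \sigma_1$ remains a positive area form on the oriented $S^2$ with $\int_N \sigma_u = 4\pi$; hence $(g_u,\sigma_u) \in \mathcal{N}^+$. I would then use Hodge theory on $S^2$ (where $H^1(S^2) = 0$) to pick a continuous family of coclosed primitives $\beta_u$ of $\sigma'_u = \sigma_u - K_{g_u}\mu_{g_u}$. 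The quadratic formula for the roots of the polynomial above gives the crude bound
\[
s_+(g_u,\sigma_u) \ \leq \ s_+(g_u,\sigma_u,\beta_u) \ \leq \ \frac{\Vert\beta_u\Vert_u}{\min f_u},
\]
and the right-hand side is continuous in $u \in [0,1]$ and hence bounded by some constant $L$.

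With these ingredients, connectedness of $A^+$ follows by a standard three-step concatenation. Given $(g_0,\sigma_0,s_0), (g_1,\sigma_1,s_1) \in A^+$, choose $S > \max(L, s_0, s_1)$ and concatenate: (i) the vertical segment $s \mapsto (g_0,\sigma_0,s)$ from $s_0$ to $S$, which lies in $A^+$ because $s_+(g_0,\sigma_0)$ is fixed in that slice and $s_0 > s_+(g_0,\sigma_0)$; (ii) the horizontal path $u \mapsto (g_u,\sigma_u,S)$, which lies in $A^+$ by the uniform bound $S > L \geq s_+(g_u,\sigma_u)$; and (iii) the vertical segment back from $S$ down to $s_1$ over $(g_1,\sigma_1)$. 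The only mild obstacle is producing the continuous family of primitives $\beta_u$, handled by the Hodge decomposition on $S^2$; the rest is direct from the definitions.
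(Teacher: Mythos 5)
Your proof is correct and follows the approach the paper intends: the convexity calculation is the same computation $\alpha_{s,\beta}(X+sW) = 1 - \beta_q(v)s + s^2 f(q) > 0$ already carried out for the $A$ case, and the connectedness argument is the natural adaptation of the preceding lemma's (one passes through \emph{large} $s$ rather than small $s$, using $\min f_u>0$ from $\mathcal{N}^+$ so that the quadratic opens upward). Your choice of a continuous family of coclosed primitives via Hodge theory and the bound $s_+(g_u,\sigma_u,\beta_u)\leq\Vert\beta_u\Vert_u/\min f_u$ make explicit the uniformity-in-$u$ that the paper's argument for $A$ also implicitly relies on.
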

Having found large sets $A$ and $A^+$ for which the domain is convex, we proceed to find in this class some symmetric examples, for which, we can compute the symplectic cohomology. For this purpose, we pick a metric $\bar g$ on $N$ with $|K|=1$ and let $\bar{\sigma}=K\mu$. We consider the \textit{symmetric twisted symplectic form}
\[
\bar\omega_s := d\theta - s\pi^*\bar\sigma
\]
and the speed Hamiltonian $\rho$ associated to the metric $\bar g$.

For $N=S^2$, we have that $(\bar g,\bar{\sigma})\in \mathcal N^+$.  Moreover, $s_-(\bar g,\bar{\sigma})=s_-(\bar g,\bar{\sigma},0)=+\infty$ and $s_+(\bar g,\bar\sigma)=s_+(\bar g,\bar{\sigma},0)=0$. Thus, $(\bar g,\bar \sigma,s)\in A\cap A^+$, for all $s>0$.

For $N$ a surface of genus\;$\geq 2$, we have $(\bar g,\bar{\sigma})\in\mathcal N$ and $s_-(\bar g,\bar{\sigma})=s_-(\bar g,\bar{\sigma},0)=1$. Thus, $(\bar g,\bar \sigma,s)\in A$, for all $s\in(0,1)$.
\begin{corollary}\label{c:sh}
Let $N=S^2$. For every $(g,\sigma,s)\in A\cup A^+$, the domain $(D^g,\omega_s,\alpha_{s,\beta})$ can be deformed through convex domains to $(D^{\bar g},\bar{\omega}_{\bar s},\alpha_{\bar s,0})$ for any $\bar s>0$.

Let $N$ be a surface of genus\;$\geq 2$. For every $(g,\sigma,s)\in A$, the domain $(D^g,\omega_s,\alpha_{s,\beta})$ can be deformed through convex domains to $(D^{\bar g},\bar{\omega}_{\bar s},\alpha_{\bar s,0})$ for any $\bar s\in(0,1)$.

In both cases, the relative class \eqref{Equation intro relative class}
is constant during the deformation up to a positive factor and up to identifying domains by a fibrewise rescaling. So,
\[
SH^*(D^g,\omega_s,\alpha_{s,\beta})\cong SH^*(D^{\bar g},\bar{\omega}_{\bar s},\bar\alpha_{\bar s,0}).
\]
\end{corollary}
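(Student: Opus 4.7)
The strategy is to promote the parameter-space path provided by connectedness of $A$ (resp.\ $A\cup A^+$) into a deformation of convex manifolds on a single ambient manifold whose relative class is literally constant, and then invoke Theorem~\ref{t:invariance1}.

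First, using connectedness, pick a smooth path $u\mapsto(g_u,\sigma_u,s_u,\beta_u)$, $u\in[0,1]$, joining $(g,\sigma,s,\beta)$ to $(\bar g,\bar\sigma,\bar s,0)$, with each $(g_u,\sigma_u,s_u)$ in $A$ (resp.\ $A\cup A^+$) and $\beta_u$ a smooth primitive of $\sigma_u-K_{g_u}\mu_{g_u}$ satisfying the appropriate inequality with $s_u$. To pass to a single ambient manifold, take the positive square root $\phi_u$ of the $g_0$-self-adjoint bundle endomorphism representing $g_u$ in terms of $g_0$; then $\phi_u:TN\to TN$ is a fibrewise linear bundle isomorphism with $\phi_u(D^{g_0})=D^{g_u}$, depending smoothly on $u$ and satisfying $\phi_0=\mathrm{id}$. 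Pulling back yields a smooth family of convex domains $(D^{g_0},\omega^u,\alpha^u):=(D^{g_0},\phi_u^*\omega_{s_u},\phi_u^*\alpha_{s_u,\beta_u})$, which completes to a deformation of convex manifolds by the reduction recorded at the end of Section~\ref{Section Summary of results}.

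Second, analyse the relative class. By the Gysin sequence \eqref{e:gysin} and the connecting map \eqref{e:pair}, the natural map $H^2(D^{g_0},\Sigma^{g_0})\to H^2(D^{g_0})=H^2(N)$ equals, modulo the Thom isomorphism, cup product with the Euler class. Since $\chi(N)\neq 0$ for $N=S^2$ and surfaces of genus $\geq 2$, this map is injective, so the relative class $[\omega^u,\phi_u^*\theta_{s_u,\beta_u}]\in H^2_c(D^{g_0})$ is determined by its image $[\omega^u]=-s_u\,\phi_u^*[\pi^*\sigma_u]$ in $H^2(D^{g_0})$. The normalisation \eqref{e:nor} forces $\int_N\sigma_u=2\pi\chi(N)$, and since $H^2(N;\R)$ is one-dimensional and $\pi\circ\phi_u=\pi$, the class $\phi_u^*[\pi^*\sigma_u]\in H^2(D^{g_0})$ is independent of $u$. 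The only variation of the relative class along the deformation is therefore the positive scalar $s_u$.

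Finally, absorb $s_u$ by a $u$-dependent positive rescaling. Replacing $(\omega^u,\alpha^u)$ by $(c_u\omega^u,c_u\alpha^u)$ with $c_u:=s_0/s_u>0$ leaves unchanged the class of compatible almost complex structures and the unparametrised Hamiltonian flow; it identifies the corresponding Floer complexes up to the change of Novikov variable $t\mapsto t^{c_u}$, yielding a canonical isomorphism $SH^*(D^{g_0},\omega^u,\alpha^u)\cong SH^*(D^{g_0},c_u\omega^u,c_u\alpha^u)$. After this renormalisation the image of the relative class in $H^2(N)$ is the constant class $-s_0[\sigma_0]$, hence by injectivity the relative class itself is constant in $u$. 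Theorem~\ref{t:invariance1} applied to the corresponding deformation of completions yields the isomorphism at $u=0$ and $u=1$; composing with the rescaling isomorphisms and the symplectomorphisms induced by $\phi_u$ gives the desired identification $SH^*(D^g,\omega_s,\alpha_{s,\beta})\cong SH^*(D^{\bar g},\bar\omega_{\bar s},\alpha_{\bar s,0})$. The main obstacle is coupling the two independent rigidifications needed to fit into Theorem~\ref{t:invariance1}: identifying the varying disc bundles $D^{g_u}$ fibrewise, and then positively rescaling to convert the proportional family of relative classes into a literally constant one; once both are in place, the topology collapses via the injectivity of the Euler-class map.
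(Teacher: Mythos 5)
Your proposal follows the same strategy as the paper's proof: use connectedness of the parameter space to produce a path of convex domains, identify the varying disc bundles with a fixed one by a smooth family of fibrewise maps, rescale by a positive constant $c_u$ to make the relative class literally constant (using that $H^2(D,\Sigma)\cong H^2(N)\cong\R$ and $\omega_s$ is non-exact), and then invoke Theorem~\ref{Theorem invariance under iso of SH} and Theorem~\ref{t:invariance1}. The only differences are cosmetic: the paper uses a generic radial rescaling $\psi_s(q,p)=(q,\lambda_s(q,p)p)$ while you choose the $g_0$-self-adjoint square-root $\phi_u$, and you make the normalizing constant explicit as $c_u=s_0/s_u$ (using the normalisation $\int_N\sigma_u=2\pi\chi(N)$ to pin down $[\pi^*\sigma_u]$) rather than merely asserting existence.
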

\begin{proof}
A deformation $(D^s,\omega^s,\alpha^s)$ from $(D^{\bar g},\bar{\omega}_{s'},\bar\alpha_{s',0})$ to $(D^g,\omega_s,\alpha_{s,\beta})$ exists since $A\cup A^+$ (respectively $A$) is connected. Performing a rescaling $\psi_s: D^{\bar g}\to D^s$ of the form $\psi_s(q,p)=(q,\lambda_s(q,p)p)$, for some suitable function $\lambda_s:D^{\bar g}\to [0,+\infty)$ we can pull-back all the objects to $D^{\bar g}$. Since $H^2(D^{\bar g},\Sigma^{\bar g})\cong H^2(N)\cong\R$ and $\omega_s$ is non-exact, it follows that there exists also some constant $c_s>0$ such that $(D^{\bar g},c_s\psi_s^*\omega_s,c_s\psi_s^*\alpha_{s,\beta})$ is a deformation with constant relative class, from $(D^{\bar g},\bar{\omega}_{\bar s},\bar\alpha_{\bar s,0})$ to $(D^{\bar g},c_1\psi_1^*\omega_s,c_1\psi_1^*\alpha_{s,\beta})$. Since the factor $c_1$ does not affect the symplectic cohomology, the isomorphism in the statement follows from Theorem \ref{Theorem invariance under iso of SH} and Theorem \ref{t:invariance1}.
\end{proof}
\red{We compute the symplectic cohomology of the symmetric cases in Lemma \ref{l:sh1} and \ref{l:sh2} by a direct and geometric approach which uses that Lemma \ref{l:duist} and Corollary \ref{c:duist} hold also in the present setting, as their proof can be readily adapted. Alternatively, the computation can be done using the deformation invariance of symplectic cohomology proved in Corollary \ref{t:invariance3} as follows. We decrease $s$ to $0$ to deform $(D^{\bar g},\bar\omega_s,\bar{\alpha}_{s,0})$ to $(D^{\bar g},d\theta,\theta|_{\Sigma^{\bar g}})$. Corollary \ref{t:invariance3} with ${\zeta}_0=0$ and ${\zeta}_1=-s\tau(\pi^*\bar\sigma)$ yields
	\[
	SH^*(D^{\bar g},\bar\omega_s,\bar\alpha_{s,0})\cong SH^*(D^{\bar g},d\theta,\theta|_{\Sigma^{\bar g}})_{-s\tau(\pi^*\sigma)}\cong H_{2-*}(\mathcal{L} N)_{-s\tau(\sigma)},
	\]
	where we used the twisted Viterbo isomorphism \cite{Ritter1}. The twisted homology of $\mathcal{L} N$ vanishes for $N=S^2$ since $\sigma$ is not exact \cite{Ritter1}. For a surface of genus\;$\geq 2$, we recover the untwisted homology of $\mathcal{L} N$ since $\sigma$ is atoroidal, meaning $\tau(\sigma)=0\in H^1(\mathcal{L}N)$.}
\begin{lemma}\label{l:sh1}
Consider the symmetric twisted tangent bundle $(TN,\bar{\omega}_s)$, where $N$ is a surface of genus\;$\geq 2$ and $s<1$. The periodic Reeb orbits on $\Sigma^{\bar g}$ are as follows.
\begin{itemize}
\item There is no periodic orbit in the trivial free homotopy class of $D^{\bar g}$.
\item In every non-trivial free homotopy class, there is exactly one periodic orbit, it is transversally non-degenerate, and its transverse Conley-Zehnder index is $0$.
\end{itemize}
It follows that $SH^*_c(D^{\bar g},\bar\omega_{s},\bar{\alpha}_{s,0})\cong H_{2-*}(N)$ if $c=0$ is the trivial free homotopy class, and $SH^*_c(D^{\bar g},\bar\omega_{s},\bar{\alpha}_{s,0})\cong H_{2-*}(S^1)$ if $c\neq 0$.
\end{lemma}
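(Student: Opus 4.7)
The plan is to identify the closed Reeb orbits of $\bar\alpha_{s,0}$ with closed unit-speed magnetic geodesics on the hyperbolic surface $N$, and then exploit the rigidity of $2$-dimensional hyperbolic geometry. By Lemma~\ref{Lemma Magnetic geodesic vector field}, the Reeb vector field of $\bar\alpha_{s,0}$ is, up to reparametrization, $X+sW$, so its closed orbits project orientation-bijectively to closed unit-speed magnetic geodesics of $(\bar g,s\bar\sigma)$; since $\bar\sigma=K\mu=-\mu$, the density is $f\equiv-1$ and \eqref{e:kappaf} yields constant geodesic curvature $\kappa_\gamma\equiv -s\in(-1,0)$. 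Lifting to $\mathbb{H}^2$, the curves of constant geodesic curvature with $|\kappa|<1$ are hypercycles, i.e.\ curves equidistant from a geodesic, and these are never compact in $\mathbb{H}^2$. A closed magnetic geodesic in $N$ representing the free-homotopy class $c$ must therefore lift to a curve invariant under the deck transformation $\varphi_c$; this forces $\varphi_c$ to be hyperbolic and the lifted hypercycle to be equidistant from the axis $\tilde\gamma_c$ of $\varphi_c$. For the orientation prescribed by the Reeb vector field, exactly one of the two equidistant curves has signed curvature $-s$, giving no Reeb orbit in the trivial class and a unique prime Reeb orbit $x_c$ in every non-trivial class. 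Its $k$-th iterate projects into the distinct free-homotopy class $c^k$ and so only contributes to $SH^*_{c^k}$.

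Each magnetic geodesic $\gamma_c$ is the global minimiser of the free-period action $\mathbb{S}_1$ in its free-homotopy class: the distance function to $\tilde\gamma_c$ in $\mathbb{H}^2$ is strictly convex along geodesics transverse to $\tilde\gamma_c$, and this descends to a strictly positive second variation of $\mathbb{S}_1$ transverse to $\gamma_c$ inside the component $\mathcal{L}_cN$. Hence $\gamma_c$ is a Morse local minimum; transverse non-degeneracy of $x_c$ follows, and Lemma~\ref{l:duist} together with Corollary~\ref{c:duist} give $\bar\mu(x_c)=0$.

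With these orbits in hand, fix the cofinal family of radial Hamiltonians from Lemma~\ref{Lemma cofinal family of hams}. For $c=0$ the chain complex has only the Morse generators coming from $N$ at the zero section, giving $SH^*_0(D^{\bar g},\bar\omega_s,\bar\alpha_{s,0})\cong H^*(N)\cong H_{2-*}(N)$. For $c\neq 0$ the complex is $\Lambda\langle x_{c,-},x_{c,+}\rangle$ in degrees $1$ and $2$; since $x_c$ and its underlying prime Reeb orbit both have $\bar\mu=0$, $x_c$ is a good orbit, so by \eqref{e:deltaDelta}--\eqref{e:ab} the differential is supported only on generators of strictly smaller Reeb period in class $c$---of which there are none---and hence vanishes. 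This yields $SH^*_c\cong\Lambda^2$ concentrated in degrees $1$ and $2$, matching $H_{2-*}(S^1)$. The most delicate step of the plan is the Morse-minimum claim in the second paragraph; the cleanest alternative is to compute the linearised Reeb return map directly in the frame $(X,Y,H,V)$ using the brackets \eqref{e:bracket}, which exhibits it as positively hyperbolic for $|s|<1$ and thereby establishes transverse non-degeneracy and $\bar\mu(x_c)=0$ simultaneously.
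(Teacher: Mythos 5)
Your proposal is correct and essentially mirrors the paper's proof: lift to $\mathbb{H}^2$, identify the constant-curvature magnetic geodesics as hypercycles (equidistant curves), deduce exactly one closed orbit per non-trivial free-homotopy class and none in the trivial class, obtain $\bar\mu=0$ from minimality via Corollary~\ref{c:duist}, and read off $SH^*_c$ class by class. The only notable variation is in how you argue transverse non-degeneracy and minimality: the paper invokes the fact that the reparametrised lifted curves are geodesics of a negatively curved Finsler metric on $\mathbb{H}^2$, which delivers both at once, whereas your distance-convexity sketch would need a little more work to establish strict positivity of the transverse Hessian of $\mathbb{S}_1$ (your alternative of computing the linearised return map in the frame $(X,Y,H,V)$ via \eqref{e:bracket} would also close this gap).
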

\begin{proof}
The magnetic geodesics with speed $1$ correspond to curves in $N$ with geodesic curvature $s$. Following Hedlund \cite{Hedlund}, such curves have an explicit description, when lifted to the universal cover $\mathbb H$ of $N$ (where $\mathbb H=\{(x,y)\in \R^2\ |\ y>0\}$ is the hyperbolic upper half-plane). The lifted curves are oriented segments of circles that form an angle $\theta\in(0,\frac\pi2)$ between the exit direction and the boundary at infinity $\{y=0\}$ oriented by $\partial_x$, given by $\cos\theta=s$. As for standard geodesics, we know that there are no contractible trajectories and exactly one trajectory in every non-trivial free homotopy class. After reparametrisation, the lifted curves are genuine geodesics for a Finsler metric on $\mathbb H^2$ with negative flag curvature. In particular, each non-contractible periodic orbit is transversally non-degenerate and length-minimizing in its class. Therefore, by Corollary \ref{c:duist}, the transverse Conley-Zehnder index of the associated Reeb orbit is zero.
%
%
In particular, each closed orbit is \textit{good} (as the primitive orbits have even index). Thus, after a small time-dependent perturbation of the Hamiltonian it yields a Floer subcomplex with the homology of $S^1$.
\end{proof}

\begin{lemma}\label{l:sh2}
Consider the symmetric twisted tangent bundle $(TS^2,\bar{\omega}_{s})$ and the primitive $\bar\theta_{s,0}$ as in \eqref{e:theta}, where $s>0$. The radial coordinate induced by integrating the Liouville flow of $\theta_{s,0}$ starting from $\Sigma^{\bar g}$ is defined globally on $T^*S^2$ via
\[
R_{s}(q,v) = \sqrt{\frac{|v|^2+s^2}{1+s^2}}\ ,
\]
$(TS^2,\bar\omega_{s},\bar\theta_{s,0})$ is the completion of $(D^{\bar g},\bar\omega_{s},\bar\alpha_{s,0})$ and $SH^*(D^{\bar g},\bar\omega_s,\bar\alpha_{s,0})=0$.
\end{lemma}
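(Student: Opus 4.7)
The plan has three parts: compute the Liouville vector field $Y_s$ of $\bar\theta_{s,0}$ with respect to $\bar\omega_s$ and integrate its flow to obtain $R_s$; identify $(TS^2,\bar\omega_s,\bar\theta_{s,0})$ with the completion of the convex domain; and deduce the vanishing of $SH^*$ via the deformation invariance of Corollary \ref{t:invariance3}.

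For the round sphere we have $K=1$, $\bar\sigma=\mu$, and $d\tau=-\pi^*\bar\sigma$, so $\bar\theta_{s,0}=\theta+s\tau$ and $\bar\omega_s=d\bar\theta_{s,0}$. Working in the frame $(X,Y,H,V)$ of Section \ref{Subsection Basic notation for twisted cotangent bundles} and using the identities $\iota_Y d\theta=\theta$, $d\theta(H,V)=-\rho^2$, and $\pi^*\bar\sigma(H,X)=g(\jmath^2 v,v)=-\rho^2$, one solves $\iota_{Y_s}\bar\omega_s=\theta+s\tau$ by writing $Y_s=aY+bH+cV+dX$ and matching coefficients of the dual coframe. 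This yields the unique solution
\[
Y_s \;=\; \frac{\rho^2+s^2}{\rho^2}\,Y \;-\; \frac{s}{\rho^2}\,H
\]
on $TS^2\setminus S^2$. Since $Y$ is vertical and $H$ is horizontal, metric compatibility of the Levi-Civita connection gives $Y(|v|^2)=2\rho^2$ and $H(|v|^2)=0$; along flow lines of $Y_s$ we therefore have $\tfrac{d}{dt}|v|^2=2(|v|^2+s^2)$, which with initial condition $|v|(0)=1$ on $\Sigma^{\bar g}$ integrates to $|v|^2(t)+s^2=(1+s^2)e^{2t}$. Setting $R_s=e^t$ gives the claimed formula $R_s=\sqrt{(|v|^2+s^2)/(1+s^2)}$, smooth on all of $T^*S^2$, equal to $1$ on $\Sigma^{\bar g}$, with $dR_s(Y_s)=R_s$ on $T^*S^2\setminus S^2$ (a direct check).

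The outward flow of $Y_s$ defines a diffeomorphism $\Sigma^{\bar g}\times[0,\infty)\to\{R_s\geq 1\}=\{|v|\geq 1\}$ and furnishes a conical parametrisation in the sense of \eqref{Equation Intro j}. Taking any exhausting function $h$ on $TS^2$ with $\{h\leq 0\}=D^{\bar g}$ that agrees with a strictly increasing function of $R_s$ on $\{|v|\geq 1\}$ makes $(TS^2,\bar\omega_s,\bar\theta_{s,0})$ into a complete convex manifold, canonically identified with the completion $(D^{\bar g})^\wedge$ of the convex domain $(D^{\bar g},\bar\omega_s,\bar\alpha_{s,0})$.

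For the vanishing of $SH^*$, I would apply Corollary \ref{t:invariance3} to the family $(T^*S^2,\bar\omega_{s'},\bar\theta_{s',0})_{s'\in[0,s]}$ with twisting classes $\zeta_{s'}=(s'-s)\tau(\pi^*\bar\sigma)\in H^1(\mathcal{L}T^*S^2)$. Since $\tau(\bar\omega_{s'})=-s'\tau(\pi^*\bar\sigma)$ ($d\theta$ has zero transgression), the sum $\tau(\bar\omega_{s'})+\zeta_{s'}=-s\tau(\pi^*\bar\sigma)$ is constant in $s'$, and transgression-compatibility (Definition \ref{Definition Intro transgression compatible twist}) at each $s'\in(0,s]$ reduces to the trivial inequality $(s-s')/s'>-1$. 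Corollary \ref{t:invariance3} thus yields
\[
SH^*(T^*S^2,\bar\omega_s,\bar\theta_{s,0})\;\cong\;SH^*(T^*S^2,d\theta,\theta)_{-s\tau(\pi^*\bar\sigma)},
\]
and by the twisted Viterbo isomorphism \cite{Ritter1} the right-hand side equals $H_{2-*}(\mathcal{L}S^2)_{-s\tau(\bar\sigma)}$ (using that $S^2$ is spin). This vanishes because $\bar\sigma$ is not exact, so $\tau(\bar\sigma)\neq 0$ in $H^1(\mathcal{L}_0 S^2)$, and the twisted homology vanishes by \cite{Ritter1} since $\pi_m(S^2)$ is finitely generated for every $m\geq 2$. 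The main obstacle is simply the bookkeeping needed to verify the transgression-compatibility hypotheses; alternatively one could entirely bypass the twisting argument and invoke the explicit symplectomorphism $(T^*S^2,\bar\omega_s)\cong(\mathcal{O}_{\C\P^1}(-2),\omega_{\mathcal{O}(-2)})$ from Appendix \ref{Subsection From the magnetic TS2 to the HK TCP1} together with $SH^*(\mathcal{O}_{\C\P^1}(-2))=0$ from \cite{Ritter2,Ritter4}, using Theorems \ref{Theorem invariance under iso of SH} and \ref{t:invariance1} to match primitives at infinity.
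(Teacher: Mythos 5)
Your proposal is correct, and the first two parts coincide with the paper's argument. You obtain the Liouville field explicitly as
\[
Z_s \;=\; \tfrac{\rho^2+s^2}{\rho^2}\,Y \;-\; \tfrac{s}{\rho^2}\,H,
\]
and integrate $\tfrac{d}{dt}|v|^2 = 2(|v|^2+s^2)$; the paper instead notes $d\rho(Z_s) = (\iota_{Z_s}\bar\omega_s)(X_\rho) = (\theta+s\tau)(X_\rho) = \tfrac{\rho^2+s^2}{\rho}$ using Lemma~\ref{Lemma Magnetic geodesic vector field}, and integrates the same ODE. Both give $R_s=\sqrt{(\rho^2+s^2)/(1+s^2)}$, and both observe that $R_s\to\infty$ as $\rho\to\infty$ so the flow is positively complete, giving the completion statement. (A small caveat: the sign of your $Z_s$ hinges on the area-form convention $\mu=g(\jmath\,\cdot,\cdot)$, which is the one consistent with the paper's formula $\mathbb{Y}(v)=f\jmath v$; with the opposite convention one would get $\rho^2-s^2$ in the numerator, so it is worth stating the convention.)

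For the vanishing of $SH^*$, however, you take a genuinely different route from the paper's proof. The paper uses a direct geometric argument: for the round metric the magnetic flow is Zoll, so the radial Hamiltonians $h_k(q,v)=k\sqrt{\rho^2+s^2}$ with $k\in 1+2\pi\N$ generate Hamiltonian $S^1$-actions with minimal period $2\pi/k$; hence the only $1$-periodic orbits are the constants on the zero section, and the Conley--Zehnder indices of these constant orbits drop by $2$ each time $k$ is increased by $2\pi$, so the direct limit vanishes in every degree. You instead apply Corollary~\ref{t:invariance3} to the deformation $\bar\omega_{s'}=d\theta-s'\pi^*\bar\sigma$, $s'\in[0,s]$, verify transgression-compatibility (both in the form \eqref{Intro Transgression invariant family} with $\zeta_0=-s\tau(\pi^*\bar\sigma)$, and equivalently via the pointwise check $\zeta_{s'}=\tfrac{s-s'}{s'}\tau(\omega_{s'})$ with ratio $>-1$), and conclude by the twisted Viterbo isomorphism and the vanishing of twisted loop homology from \cite{Ritter1}. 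This is precisely the alternative the paper records in the remark just before Lemma~\ref{l:sh1}; it is shorter given the deformation machinery and generalises beyond the Zoll case, but it leans on \cite{Ritter1} for the twisted Viterbo isomorphism and the vanishing criterion, whereas the paper's proof is self-contained, geometric, and isolates exactly what makes the round sphere special. Your final fallback via the Appendix~\ref{Subsection From the magnetic TS2 to the HK TCP1} symplectomorphism and $SH^*(\mathcal{O}_{\C\P^1}(-2))=0$ is also valid and is mentioned in the paper in the remark after Theorem~\ref{t:nonexact}; you correctly flag that Theorems~\ref{Theorem invariance under iso of SH} and~\ref{t:invariance1} are then needed to match primitives at infinity.
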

\begin{proof}
Let $Z_s$ denote the Liouville vector field of $\bar\theta_{s,0}=\theta+s\tau$, which means that $\iota_{Z_s}\omega_s=\theta+s\tau$. Denote $r_s$ the coordinate defined on the complement of the zero section by the flow of $Z_s$ with $r_s=0$ along $\Sigma^{\bar g}$. By definition $R_s=e^{r_s}$. We differentiate the function $\rho$ along a flow line of $Z_s$, using Lemma \ref{Lemma Magnetic geodesic vector field},
\begin{equation*}
\frac{d\rho}{dr_s}=d\rho(Z_s)=-\omega_s\Big(\frac{1}{\rho}(X+s W),Z_s\Big)=\frac{1}{\rho}(\theta+s\tau)(X+s W)=\frac{1}{\rho}(\rho^2+s^2).
\end{equation*}
Multiplying both sides by $\tfrac{\rho}{\rho^2+s^2}$ and integrating from $0$ to $r_s$ yields the claimed formula for $r_s=\log R_s$.
Note $r_s\to \infty$ as $\rho\to \infty$, so the flow of $Z_s$ is positively complete.
We now compute the symplectic cohomology. The closed Reeb orbits on $\Sigma^{\bar g}$ correspond to curves on $S^2$ with geodesic curvature $s$. An explicit computation in geodesic polar coordinates shows that all such trajectories are periodic with common minimal period 
$
T=2\pi/\sqrt{1+s^2}.
$
We consider the sequence of Hamiltonians, for $k\in1+2\pi\N$,
\[
h_k:TS^2\to\R,\qquad h_k(q,p)=k\sqrt{1+s^2}\cdot R_s(q,v) = k \sqrt{\rho^2+s^2}.
\]
The Hamiltonian vector field is
$
X_{h_k}=k(\rho^2+s^2)^{-1/2}(X+s W\big).
$
The associated flow defines a Hamiltonian $S^1$-action on $(T^*S^2,\omega_s)$ with minimal period $2\pi/k$. Hence, the only $1$-periodic orbits of the flow are the constant orbits, which lie in the zero section. One could now compute the Conley-Zehnder indices explicitly. One can bypass this, by mimicking the argument in \cite{Ritter2} (compare also \cite[Section 2.6]{McLean-Ritter}): changing the slope $k$ to $k+2\pi$ will decrease the indices by $2$ (one looks at how the linearized flow for $h_k$ acts on a trivialisation of the anti-canonical bundle, and one notices that it has winding number one). Finally by considering the direct limit, one concludes that symplectic cohomology vanishes in each degree.
\end{proof}
\begin{proof}[\textbf{Proof of Theorem \ref{t:nonexact}}]
Let $(g,\sigma,s)\in A\cup A^+$ for $N=S^2$, or $(g,\sigma,s)\in A$ for $N$ of genus\;$\geq 2$. The computation of $SH^*(D^g,\omega_s,\alpha_{s,\beta})$ in the statement follows from Corollary \ref{c:sh} and Lemmas \ref{l:sh1}-\ref{l:sh2}. We now prove the statements about the existence of closed magnetic geodesics with speed $1/s$ for the pair $(g,\sigma)$.

For $N$ of genus\;$\geq 2$, if there were no such curve in a free homotopy class $\nu\neq 0$, we would obtain the contradiction $H_{2-*}(S^1)\cong SH^*_\nu(D^g,\omega_s,\alpha_{s,\beta})=0$.

Now let $N=S^2$. If there were no closed magnetic geodesics of speed $1/s$, we would obtain the contradiction $H^*(S^2)\cong SH^*(D^g,\omega_s,\alpha_{s,\beta})=0$.
We now prove that there are at least two prime periodic magnetic geodesics with speed $1/s$, assuming all the periodic orbits are transversally non-degenerate. 
Suppose by contradiction that $x$ is the only such geodesic. We do a case-by-case analysis of indices, using \eqref{Equation iteration formula1}:
\begin{enumerate}[\itshape a)]
	\item $\overline{\mu}(x)\leq 0$. Then $\overline{\mu}(x^k)\leq 0$, for all $k\in \N$. Thus, the non-constant orbits in the Floer chain complexes have grading $|x_{\pm}^k|\geq 1$, which would imply that $SH^0(T^*S^2,\omega_s,\alpha_{s,\beta})\cong H^0(T^*S^2)$, contradicting Theorem \ref{t:nonexact}.
	\item $\overline{\mu}(x)\geq 3$ with $x$ hyperbolic. Then $\overline{\mu}(x^{k+1})-\overline{\mu}(x^k)\geq 3$. It follows for grading reasons that $x^k_+$ is a cycle, and it is not a boundary unless it arises from the Floer differential applied to $x^k_-$. But in the local Floer complex for $x^k$, we have $\partial x^k_-=0$ whenever $x^k$ is a good orbit by \eqref{e:ab}, and we can always ensure that $x^k$ is good (if $x^k$ is a bad hyperbolic orbit, we replace $k$ by $k+1$). 
	\item $\overline{\mu}(x)\geq 3$ with $x$ elliptic. Here $\overline{\mu}(x)\geq 3$ forces $\widetilde{\Delta}\geq 1$ and non-degeneracy implies $\widetilde{\Delta}\not\in \Q$, so $\widetilde{\Delta}>1$. So, for some sufficiently large $k$, $\overline{\mu}(x^{k+1})-\overline{\mu}(x^k)\geq 4$. The proof follows as in the previous case (using that elliptic orbits are always good). 
	\item $\overline{\mu}(x)=2$. Then all iterates $x^k$ are good hyperbolic orbits. 
	The $x^k_{\pm}$ and the two generators of the Morse complex for $S^2$, give generators in gradings
	%
	%
	%
	$2,0,0,-1,-2,-3,...$
	which cannot be acyclic in degrees $2$ and $0$, contradicting Theorem \ref{t:nonexact}. 
	%
	
	\item $\overline{\mu}(x)=1$ with $x$ hyperbolic. 
	%
	%
	%
	Generators' gradings: $2,1,0,0,0,-1,-1,-2,...$, which by rank-nullity cannot be acyclic either in degree $2$ or $0$ (or both).
	
	\item $\overline{\mu}(x)=1$ with $x$ elliptic. Then $0<\widetilde{\Delta}<1$. Suppose $\widetilde \Delta<\tfrac{1}{2}$. Then the $\bar{\mu}=1$ orbits are $x^1,x^2,\ldots,x^a$ for some $a\geq 2$, thus $\bar{\mu}(x^{a+1})=3$, and recall iterates of an elliptic orbit are good. Let $m_2,m_0$ denote the Morse critical points in degrees $2,0$. The restriction of the differential to $\Lambda x_+^1 \oplus \Lambda x_+^a \to \Lambda m_2$ must have non-trivial kernel by rank-nullity, thus we obtain a cycle $y=\lambda_1x_+^1 + \lambda_2 x^a_+\neq 0$, for some $\lambda_1,\lambda_2\in\Lambda$. As symplectic cohomology vanishes, there is a chain $z$ with $\partial z=y$. By \eqref{e:deltaDelta} and \eqref{e:ab}, this can happen only if $\lambda_2=0$, as $x^a$ is good and has maximal period among the orbits with $\bar{\mu}=1$. Moreover, $\Delta z=0$ by \eqref{e:deltaDelta} as all orbits in grading $-1$ have Reeb period strictly larger than those with grading $0$.
	By \eqref{Equation BV properties}, $\Delta y=\Delta\partial z=-\partial\Delta z=0$. However, $y=\lambda_1x_+^1$ and, therefore, $\langle \Delta y,x_-^1\rangle = \lambda_1 \neq 0$ by \eqref{e:ab}, contradiction.
	Now suppose $\widetilde \Delta>\tfrac{1}{2}$. There are $m,k\in\Z$ such that $x^{k-1}$ is the only orbit with $\overline{\mu}=m$ and $x^k$ and $x^{k+1}$ are the only orbits with $\overline{\mu}=m+2$. The previous argument applies with $x^{k-1}_-$ in place of $m_2$, $x^k_+$ in place of $x_+^1$, and $x^{k+1}_+$ in place of $x^a_+$. \qedhere
\end{enumerate}
\end{proof}
%
\begin{remark}[Alternative Proof]
The last case above can be proved using $ESH^*$ as in Remark \ref{Remark Equivariant SH proof}. Suppose $x^1,x^2,\ldots,x^{a_1}$ have $\bar{\mu}=1$, and $x^{1+a_{k-1}},\ldots,x^{a_k}$ have $\bar{\mu}=2k-1$ for $k\geq 1$. Recalling the two Morse critical points, the number of generators in degrees $(2,1,0,-1,\ldots)$ after perturbation is $(1,a_1,a_1+1,a_2,a_2,a_3,a_3,\ldots)$. 
Consider the $E_1$-page of the Morse-Bott spectral sequence for $ESH^*$ \cite[Cor.7.2]{McLean-Ritter}. The Morse complex for $S^2$ contributes generators in degrees $2+2\Z_{\leq 0},0+2\Z_{\leq 0}$ due to the formal variables $u^{-m}$ in degree $-2m$. Each non-constant $S^1$-orbit with index $1-\overline{\mu}$ contributes one copy of $H^{*-1}(S^1/S^1)=H^{*-1}(\mathrm{pt})$ in grading $*+1-\overline{\mu}$ $($using \cite[Thm.4.1]{McLean-Ritter}$)$. 
The total number of generators in degrees $(2,1,0,-1,\ldots)$ is $(1,a_1,2,a_2,2,a_3,2,\ldots)$. We use two facts explained in \cite{McLean-Ritter}: the vanishing of symplectic cohomology implies the vanishing of the $S^1$-equivariant symplectic cohomology; and the equivariant Morse complex for $S^2$ constitutes a subcomplex.
Thus, by Theorem \ref{t:nonexact}, the spectral sequence converges to zero, and the $E_1$-page considered with total gradings must satisfy the same rank-nullity conditions as an acyclic complex. This implies $a_1=1$ (the degree $0$ generators in the subcomplex cannot kill a non-constant orbit, and the degree $1$ orbit will eventually kill the Morse index $2$ critical point) and thus $a_2=a_3=\cdots=2$. Using \eqref{Equation iteration formula1},
 $2m\widetilde{\Delta}$, $(2m+1)\widetilde{\Delta}$ must lie in the open interval $(m,m+1)$ for all $m\geq 1$. So $\widetilde{\Delta}\in (\frac{m}{2m},\frac{m+1}{2m+1})$. Letting $m\to \infty$ yields the contradiction $\frac{1}{2}=\widetilde{\Delta}\notin \Q$.
$\qedhere$
%
\end{remark}

\appendix
\section{From the magnetic $T^*S^2$ to the Hyperk\"{a}hler $T^*\C P^1$}
\label{Subsection From the magnetic TS2 to the HK TCP1}

The tangent bundle $T^*\C P^1 \to \C P^1$ is isomorphic as a complex line bundle to $\OO(-2)\to \C P^1$. After picking a Hermitian metric on $\OO(-2)$, we can ensure that this identification is $S^1$-equivariant (where $S^1\subset \C^*$ acts naturally by rotation in the complex fibres) and preserves the norm $\rho=|p|$.
The curvature form $\sigma$ on $T^*\C P^1$ then satisfies $\frac{1}{2\pi}[\sigma]=c_1(\mathcal{O}(-2))=-2\omega_{FS}\in H^2(\C P^1)$ where $\int_{\C P^1}\omega_{FS}=1$, and let $\tau$ be the associated angular form for $\OO(-2)$.

Fix a metric $g$ on $S^2\cong \C P^1$ of constant Gaussian curvature one and identify the real vector bundle $T^*S^2$ with $TS^2$ as in Section \ref{Section Application: Twisted cotangent bundles of surfaces}. Note however that, since $\int_{S^2} \sigma=-4\pi$, the induced rotation $\jmath:TS^2 \to TS^2$ is rotation by $-\frac{\pi}{2}$ compared to the usual orientation for $\C P^1$, and $\sigma=\mu$ where $\mu=g(\cdot,\jmath \,\cdot)$ is the area form of $g$ with respect to $\jmath$ in the notation of Section \ref{Section Application: Twisted cotangent bundles of surfaces}.

Following the conventions in \cite[Section 7.3]{Ritter4}, we can construct a symplectic form $\omega=d\tau+\varepsilon d(\rho^2\tau)$ on $TS^2$ for $\varepsilon>0$, where $d(\rho^2\tau)$ is fiberwise the area form and we have $d\tau=-\pi^*\sigma$. On the zero section, $\omega$ restricts to $-\pi^*\sigma$, therefore $[\omega]=-\pi^*\sigma\in H^2(TS^2)$.
%
Thus, away from the zero section, $\omega=d((1+\varepsilon \rho^2)\tau)$. By replacing $\varepsilon = 1/2s$ for $s>0$, and rescaling the symplectic form by $s$, we redefine the symplectic form by
\[
\widetilde{\omega}_s = d\big((\tfrac{\rho^2}{2}+s)\tau\big).
\]
 Thus $\widetilde{\omega}_s$ restricts to $-s\pi^*\sigma$ on $S^2$, just like the magnetic symplectic form $\omega_s = d\theta-s\pi^*\sigma$, and so $[\widetilde{\omega}_s]=[\omega_s]\in H^2(TS^2)$.

The form $\widetilde{\omega}_s$ can be identified with the Hyperk\"{a}hler form $\omega_I$ for $TS^2\cong T^*\C P^1$ viewed as an asymptotically locally Euclidean manifold \cite{Ritter2}, for which the zero section and the fibres are holomorphic submanifolds. So $\widetilde{\omega}_s$ makes the zero section and the fibres of $TS^2$ both symplectic submanifolds; $d\theta$ makes them both Lagrangian; and $\omega_s$ makes the zero section symplectic but keeps the fibres Lagrangian. 

\begin{theorem}
	There exists a diffeomorphism $F_s:TS^2\to TS^2$ preserving the zero section (but not the fibres) such that 
	\begin{equation}\label{eq:psis}
	F_s^*\big((\tfrac{\rho^2}{2}+s)\tau\big) =\theta+s\tau.
	\end{equation}
	In particular, we can identify the magnetic $(TS^2,\omega_s=d\theta-s\pi^*\sigma)$ with the negative line bundle $(\mathcal{O}_{\C P^1}(-2),\widetilde{\omega}_s)$ and $SH^*(TS^2,\omega_s)\cong SH^*(\mathcal{O}_{\C P^1}(-2),\widetilde{\omega}_s)$. The latter is known to vanish by \cite{Ritter4}, consistently with Theorem \ref{t:nonexact}.
\end{theorem}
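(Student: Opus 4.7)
The plan is a Moser-type deformation argument combined with a cleanup of the primitive. The key observation is that
\[
\omega_s - \widetilde\omega_s = d\theta - d\big(\tfrac{\rho^2}{2}\tau\big) = d\beta, \qquad \beta := \theta - \tfrac{\rho^2}{2}\tau,
\]
where the $1$-form $\beta$ is smooth on all of $TS^2$ (since $\tfrac{\rho^2}{2}\tau$ has the form $\tfrac{1}{2}(x\,dy - y\,dx)$ in fiberwise Cartesian coordinates, smooth across the origin) and vanishes along the zero section. Both symplectic forms restrict to the same form $-s\sigma$ on $S^2$, which is non-degenerate there.

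First, construct a smooth family of symplectic forms $\omega_u$, $u \in [0,1]$, on $TS^2$ joining $\omega_0 = \omega_s$ to $\omega_1 = \widetilde\omega_s$, all restricting to $-s\sigma$ on $S^2$. The naive convex combination $(1-u)\omega_s + u\widetilde\omega_s$ fails to be non-degenerate throughout: a direct coframe calculation shows $\omega_u \wedge \omega_u$ is proportional to $(1-u)^2 - us - u^2\rho^2/2$, which changes sign for some intermediate $u$. One instead uses a subtler two-step path, e.g.\ first interpolating within the family of K\"ahler forms on $T^*\C\P^1 \cong \OO(-2)$ obtained by scaling the hyperk\"ahler fiberwise contribution, then adjusting the horizontal piece. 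Along this path pick primitives $\alpha_u$ on $TS^2 \setminus S^2$ with $d\alpha_u = \omega_u$, $\alpha_0 = \theta + s\tau$, $\alpha_1 = (\tfrac{\rho^2}{2}+s)\tau$, and with $\partial_u\alpha_u$ extending smoothly across $S^2$ to vanish there.

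Apply Moser: define the vector field $V_u$ on $TS^2$ by $\iota_{V_u}\omega_u = -\partial_u\alpha_u$. Since $\partial_u\alpha_u|_{S^2} = 0$ and $\omega_u|_{S^2}$ is non-degenerate, $V_u$ vanishes on the zero section, so its flow $\psi_u$ is globally defined and preserves $S^2$. A Cartan-formula computation yields $\psi_1^*\widetilde\omega_s = \omega_s$ and $\psi_1^*\alpha_1 - \alpha_0 = dG$ for the function $G := \int_0^1 \psi_u^*(\iota_{V_u}\alpha_u)\,du$ on $TS^2 \setminus S^2$. To convert this into exact equality of primitives, the residual exact discrepancy $dG$ is absorbed by post-composing $\psi_1$ with a Hamiltonian diffeomorphism of $(TS^2,\omega_s)$ chosen so that its effect on $\alpha_0$ exactly cancels $dG$, or equivalently by modifying the primitive family $\alpha_u$ through the addition of a smooth family of closed $1$-forms $\gamma_u$ (with $\gamma_0=\gamma_1=0$) so that the accumulated discrepancy integrates to zero. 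The main obstacle is constructing the non-degenerate symplectic interpolation $\omega_u$ and then carrying out the primitive cleanup compatibly with preservation of the zero section.
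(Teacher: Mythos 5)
Your proposal takes a genuinely different route from the paper, which constructs $F_s$ \emph{explicitly}: the paper writes $F_s = m_{a_s(\rho)}\circ \Phi_{b_s(\rho)}$, where $m_a$ is fiberwise dilation by $a$ and $\Phi_b$ is the time-$b$ flow of $-H=(-\jmath\,v)^h$ (lifted geodesic flow), computes $\Phi_b^*\tau = -\tfrac{\sin(b\rho)}{\rho}\,\theta + \cos(b\rho)\,\tau$ via the bracket relations, and then solves two scalar equations to determine $a_s(\rho),b_s(\rho)$ in closed form. No Moser argument is needed, and the equality of primitives is obtained exactly and immediately.

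Your Moser-type plan has genuine gaps beyond the ones you flag yourself, and the ones you flag are not minor technicalities. First, you note correctly that the convex combination of $\omega_s$ and $\widetilde\omega_s$ degenerates, but you then invoke a "subtler two-step path" without producing one; this is the crux of a Moser argument and cannot be left as an aside. Second, you assume a family of primitives $\alpha_u$ on $TS^2\setminus S^2$ with $d\alpha_u=\omega_u$, with the correct endpoints, and with $\partial_u\alpha_u$ extending smoothly across $S^2$ and vanishing there. The endpoint forms $\theta+s\tau$ and $(\tfrac{\rho^2}{2}+s)\tau$ do \emph{not} extend smoothly across the zero section (only their differentials do), so such a family must be constructed with care — the requirement that $\partial_u\alpha_u$ extend and vanish on $S^2$ is a strong constraint that is not obviously compatible with the given endpoint primitives and the as-yet-unconstructed path $\omega_u$. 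Third, and most seriously, even granted the Moser isotopy, you obtain $\psi_1^*\alpha_1 = \alpha_0 + dG$ for some function $G$; the theorem requires \emph{exact} equality of primitives, and your "cleanup" step — absorbing $dG$ by a Hamiltonian diffeomorphism or by re-choosing the $\alpha_u$ — is not spelled out, and it is not clear it can be done while simultaneously preserving the zero section and working on a manifold where neither primitive extends across $S^2$. (The usual trick of flowing by the Liouville field to kill an exact discrepancy, as in Remark~\ref{Remark invariance in Liouville case}, moves points off the zero section and so is not available here without modification.) By contrast, the paper's explicit formula $F_s = m_{a_s(\rho)}\circ\Phi_{b_s(\rho)}$ produces exactly the required identity with no residual term, precisely because the ansatz lives inside a two-parameter family of diffeomorphisms preserving $S^2$ on which the equation reduces to elementary algebra in $(a_s,b_s)$.
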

\begin{proof}
	We follow the ideas in \cite[Section 2]{Benedetti2} and refer to Section \ref{Subsection Preliminary notation for cotangent bundles} and \ref{Subsection Basic notation for twisted cotangent bundles} for the notation.
	 We denote by $\Sigma_{\rho}$ an arbitrary level set of $\rho$. From \eqref{e:bracket} and the general formula $d\alpha(U_1,U_2) = U_1\cdot \alpha(U_2)-U_2\cdot \alpha(U_1) - \alpha([U_1,U_2])$, for a $1$-form $\alpha$ and vector fields $U_1,U_2$, we get
	 \begin{equation}\label{e:bracket2}
	 d\theta=\tau\wedge \eta,\qquad d\eta=\theta\wedge \tau,\qquad d\tau=\tfrac{1}{\rho^2}\eta\wedge\theta\qquad \text{on}\ \ T\Sigma_{\rho},
	 \end{equation}
	 where the last equality follows since $K=1$.
	 For $a>0$, consider the rescaling
	\[m_a:TS^2\to TS^2, \quad m_a(q,v)=(q,av),\]
	which satisfies
	\begin{equation}\label{e:ma}
	m_a^*\tau=\tau,\qquad \partial_a m_a=\tfrac1a Y,\qquad dm_a\cdot H=aH.
	\end{equation}
	Denote by $\Phi_t$ the flow at time $t$ of the vector field $-H=(-\jmath\, v)^h$. The integral curves of $\Phi_t$ are $t\mapsto (\gamma(t),\jmath\, \gamma'(t))$, where $t\mapsto \gamma(t)$ is a geodesic in $S^2$ for $g$. Indeed, $\jmath\,\gamma'$ is a parallel field along $\gamma$ and $\gamma'=-\jmath\, v$ with $v=\jmath\, \gamma'$. We claim that 
	\[
	\Phi_b^*\tau = -\frac{\sin(b\rho)}{\rho}\,\theta + \cos(b\rho)\,\tau. 
	\]
	\textit{Proof of claim.} Say $\Phi_b^*\tau = x \theta + y \eta + z \tau$ for smooth $x,y,z$ depending on $b$ (and write $x'$ etc.~to denote derivatives in $b$). Here we used that $\Phi$ preserves the tangent bundle of $\Sigma_\rho$ , which is spanned by $X,H,V$ and the dual space is spanned by $\theta,\eta,\tau$. Multiplying that equation by $(\Phi_b^*)^{-1}=\Phi_{-b}^*$, differentiating in $b$ yields
	\[
	x' \theta + y'\eta + z' \tau=x\mathcal L_{H}\theta+y\mathcal L_{H}\eta+z\mathcal L_{H}\tau=-\rho^2 x\tau+ z\theta,
	\] where we used Cartan's formula together with \eqref{e:bracket2}.
 Thus $y$ is constant in $b$, and $x'=z$, $z'=-\rho^2 x$. This implies $z''=-\rho^2z$ and $x=z'$. Since $\Phi_1^*=\mathrm{Id}$, we have $x_1=y_1=0$, $z_1=1$, and therefore $x=\sin(b\rho)/\rho$, $y=0$ and $z=\cos(b\rho)$. $\checkmark$
	
	We now claim that the following diffeomorphism satisfies \eqref{eq:psis} for some smooth functions $a_s:[0,\infty)\to [0,\infty)$, $b_s:[0,\infty)\to \R$ that we must determine:
	\[
	F_s = m_{a_s(\rho)}\circ \Phi_{b_s(\rho)}.
	\]
	\textit{Proof of claim.} Abbreviate $c_s(\rho)=a_s(\rho)^2\cdot\frac{\rho^2}{2}+s$. The pull-back
	$F_s^*((\tfrac{\rho^2}{2}+s)\tau)$ equals 
	\[
	\Phi_{b_s(\rho)}^*m_{a_s(\rho)}^*\big((\tfrac{\rho^2}{2} +s)\tau\big)
	= c_s(r)
	\Big(-\frac{\sin(b_s(\rho)\rho)}{\rho}\,\theta + \cos(b_s(\rho)\rho)\,\tau\Big).
	\]
	Observe indeed that
	\[
	dF_s=\tfrac{1}{b_s(\rho)}Y\otimes d(a_s(\rho))-a_s(\rho)d m_{a_s(\rho)}\cdot H\otimes d(b_s(\rho))+dm_{a_s(\rho)}d\Phi_{b_s(\rho)}
	\]
	and $\tau$ vanishes on the first two terms. 
	To satisfy \eqref{eq:psis} we want $c_s(\rho)\cos(b_s(\rho)\rho)=s$ and $c_s(\rho)\sin (b_s(\rho)\rho)=-\rho$. Squaring and adding gives $c_s(\rho)=\sqrt{\rho^2+s^2}$, whereas taking the quotient implies $\tan (b_s(\rho)\rho)=-\frac{\rho}{s}$. Therefore, we find 
	\[
	a_s(\rho)=\frac{1}{\rho}\sqrt{2(\sqrt{\rho^2+s^2}-s)}=\sqrt{\frac{2}{\sqrt{\rho^2+s^2}+s}},\quad b_s(\rho)=-\frac{1}{\rho}\tan^{-1}\Big(\frac{\rho}{s}\Big)=-\frac{1}{s}u(\tfrac{\rho^2}{s^2}),
	\]
where $u:[0,+\infty)\to (0,1]$ 
is the unique strictly decreasing smooth function such that $u(x^2)=\frac{\tan^{-1}(x)}{x}$. Thus, the functions $a_s:[0,+\infty) \to (0,\sqrt{2/s}]$ and $b_s:[0,+\infty)\to [-1/s,0)$ are strictly monotone and $a_s\circ \rho:TS^2\to (0,\sqrt{2/s}]$ and $b_s\circ \rho:TS^2\to [-1/s,0)$ are globally smooth. Therefore, the map $F_s$ is a diffeomorphism satisfying \eqref{eq:psis} and the claim is established.
\end{proof}
\section{Iteration formula for CZ-indices in dimension $4$}
\label{Appendix1}
Let $(M,\omega)$ be convex and let $n=\tfrac{1}{2}\dim M$.
For autonomous (i.e. time-independent) Hamiltonians $H:M\to \R$, any non-constant $1$-periodic orbit $x$ will be degenerate as there is at least an $S^1$-family of such orbits obtained by time-translation $x(\cdot+\textrm{constant})$. Such an orbit $x$ is called \textit{transversally non-degenerate} if the 1-eigenspace of $d_{x(0)}\varphi_H^1$ is one-dimensional, i.e. equals $\R\cdot X_H$. Assume now that $x$ is transversally non-degenerate. Then the family of $1$-orbits near $x$ is parametrised by $S^1$. Following the conventions\footnote{Except, we declare $\omega(\cdot,X_H)=dH$, which means that our grading by $n-\mu_{\mathrm{CZ}}$ will agree with the Morse index of $H$ when $H$ is a time-independent, $C^2$-small Morse function.}
of Salamon's notes \cite{Salamon}
%
%
 one can define a Conley-Zehnder index\footnote{more precisely, the Robbin-Salamon index \cite{Robbin-Salamon}.} $\mu(x)$ associated to the linearisation $d_{x(0)} \varphi_H^t$ of the Hamiltonian flow, written as a path of symplectic matrices $\psi_t$ by symplectically trivializing the symplectic vector bundle $x^*TM$ compatibly with a trivialisation of the canonical bundle.\footnote{We work under the assumption that $c_1(M)=0$. Thus the canonical bundle $\mathcal{K}=\Lambda_{\C}^\mathrm{top} T^*M$ is trivial, using $c_1(\mathcal{K})=-c_1(M)$. A trivialisation of $\mathcal{K}$ is specified by a choice of global non-vanishing section $\Omega$ of $\mathcal{K}$, whereas a trivialisation of $x^*TM$ is specified by $n$ linearly independent sections $v_1(t),\ldots,v_n(t)\in T_{x(t)}M$. Let $f(t)=\Omega(v_1(t),\ldots,v_n(t))$. The obstruction for the two trivializations to be compatible, is the homotopy class of the phase map $f/|f|:S^1\to S^1$.}
 
Assume $x(t)=(y(t),R)\in M^{\out}=\Sigma \times [1,\infty)$ lies in the conical end, and that $H=h(R)$ is radial there with $h''\geq 0$ (with equality for large $R$, where $h$ is linear). Note $X_H=h'(R)Y$ is a multiple of the Reeb field $Y$, so $\overline{y}(t)=y(t/T)$ is a Reeb orbit of period $T=h'(R)$. Decompose $TM^{\out}\cong \xi \oplus \R Z \oplus \R Y$, where $\xi$ is the contact structure and $Z=R\partial_R$ is the Liouville field (noting that the oriented basis for $\xi^{\perp}$ is $Z,Y$, not $Y,Z$). We assume $n\geq 2$ (i.e. $\xi \neq 0$), 
so one can pick a basis of independent sections for $x^*\xi$ which together with $Y,Z$ yield a trivialisation of $x^*TM^{\out}$ compatible with the trivialisation of $x^*\mathcal{K}$. In that basis,
\[
\psi_t =  \overline{\psi}_t \oplus \left(\begin{smallmatrix} 1\;\; & 0 \\ t c\cdot h''(R)\;\; & 1 \end{smallmatrix} \right)
\]
where $c>0$ is a positive constant, and where $\overline{\psi}_t$ is the path of symplectic matrices obtained by trivializing the contact distribution $\xi$ along the orbit. As $h''>0$, the latter symplectic shear contributes $+\tfrac{1}{2}$ to the CZ-index \cite{Robbin-Salamon}. 
%
%
%
%
%
Thus 
\[
\mu(x) = \overline{\mu}(x) + \tfrac{1}{2},
\]
where $\overline{\mu}$ denotes the CZ-index of $\overline{\psi}_t$.

By \cite{CFHW}, a suitable time-dependent perturbation of the Hamiltonian, supported near the orbits, will break the $S^1$-family of orbits into two orbits $x_-,x_+$ corresponding to the minimum and maximum of a Morse function on the $S^1$-parameter space. In our convention, this affects our grading by $-\tfrac{1}{2}\dim S^1 + \textrm{(corresponding Morse index)}$. So 
\[
|x_-| = n - \mu(x)-\frac{1}{2}\dim S^1=n-\overline{\mu}(x)-1, \qquad \qquad |x_+|=|x_-|+1.
\]
\indent Denote $x^k:S^1 \to M$ the Hamiltonian orbit corresponding to the $k$-th iterate of $x$ (which appears when $h'=kT$, and corresponds under projection to the $k$-th iterate $y^k:[0,kT]\to \Sigma$). 
For $x^k$ the shear part still only contributes $+\tfrac{1}{2}$, whereas we need an iteration formula for the $\overline{\psi}_t$-summand. 

Now assume $\dim M=4$, so $\dim \xi=2$, $\overline{\psi}_t\in \mathrm{Sp}(2)$, and assume all iterates of $x$ are transversally non-degenerate. The following is a consequence of \cite[Appendix 8.1]{HWZ-Index}\footnote{In our case, all iterates of $x$ are transversally non-degenerate, so the elliptic case involves a rotation matrix by an irrational angle $\widetilde{\Delta}$ (times $2\pi$) and the first iterate has index $2k+1$ where $k=\lfloor \widetilde{\Delta}\rfloor$, whereas the $k$-th iterate involves a rotation matrix with angle $k \widetilde{\Delta}$ and so has index $2 \lfloor k \widetilde{\Delta}\rfloor+1$.%
}
%
%
%
\begin{equation}\label{Equation iteration formula1}
\begin{array}{lcl}
\overline{\mu}(x^k) & = &
{
\begin{cases}
                         2\lfloor k\widetilde{\Delta}\rfloor+1 \textrm{ for some } \widetilde{\Delta}\in \R\setminus\Q &\ \mbox{if $x$ is elliptic},\\
                         k \overline{\mu}(x)&\ \mbox{if $x$ is hyperbolic}.
                        \end{cases}
}
\\[5mm]
|x^k_-| & = &
{
\begin{cases}
                         -2\lfloor k\widetilde{\Delta}\rfloor &\ \mbox{if $x$ is elliptic},\\
                         1- k \overline{\mu}(x),&\ \mbox{if $x$ is hyperbolic}.
                        \end{cases} 
}
\\[5mm]
  |x^k_+|&=&|x^k_-|+1.
\end{array}
\end{equation}

\end{document}